\newtheorem{theorem}{Theorem}[section]
\newtheorem{lemma}[theorem]{Lemma}
\newtheorem{definition}[theorem]{Definition}
\newtheorem{example}[theorem]{Example}
\newtheorem{conjecture}[theorem]{Conjecture}
\newtheorem{proposition}[theorem]{Proposition}
\newtheorem{corollary}[theorem]{Corollary}
\newtheorem{remark}[theorem]{Remark}
\numberwithin{equation}{section}
\def\C{{\mathbb{C}}}
\def\N{{\mathbb{N}}}
\def\Y{{\mathbb{Y}}}
\def\D{{\sigma}}
\def\bu{{\mathbf{u}}}
\def\bv{{\mathbf{v}}}
\def\supp{\hbox{\rm{supp}}}
\def\ld{\hbox{\rm{ld}}}
\def\maximal{\hbox{\rm{max}}}
\begin{document}

\title{Monomial Difference Ideals}
\author{Jie Wang}
\address{KLMM, Academy of Mathematics and Systems Science, The Chinese Academy of Sciences, Beijing 100190, China}
\email{wangjie212@mails.ucas.ac.cn}
\subjclass[2010]{Primary 12H10}
\keywords{monomial difference ideal, well-mixed difference ideal, Hrushovski's question, decomposition of difference ideal}
\date{January 14, 2016}

\begin{abstract}
In this paper, basic properties of monomial difference ideals are studied. We prove the finitely generated property of well-mixed difference ideals generated by monomials. Furthermore, a finite prime decomposition of radical well-mixed monomial difference ideals is given. As a consequence, we prove that every strictly ascending chain of radical well-mixed monomial difference ideals in a difference polynomial ring is finite, which answers a question raised by E. Hrushovski in the monomial case. Moreover, the Alexander Duality for monomial ideals is generalized to the difference case.
\end{abstract}

\maketitle
\bibliographystyle{amsplain}

\section{Introduction}
Monomial ideals in a polynomial ring have been extendedly studied because of their connections with combinatorics since 1970s. Another reason to study monomial ideals is the fact that they appear as initial ideals of arbitrary ideals. Richard Stanley was the first to use squarefree monomial ideals to study simplicial complexes (\cite{stanley}). Since then, the study of squarefree monomial ideals has become an active research area in combinatorial commutative algebra. In this paper, we study the basic properties of monomial difference (abbr. $\D$-) ideals, and hope that they will play similar role in the study of general $\D$-ideals in a $\D$-polynomial ring.

It is well-known that Hilbert's basis theorem does not hold for $\D$-ideals in a $\D$-polynomial ring. Instead, we have Ritt-Raudenbush basis theorem which asserts that every perfect $\D$-ideal in a $\D$-polynomial ring has a finite basis. It is naturally to ask if the finitely generated property holds for more $\D$-ideals. Let $k$ be a $\D$-field and $R$ a finitely $\D$-generated $k$-$\D$-algebra. In \cite[Section 4.6]{Hrushovski1}, Ehud Hrushovski raised the question whether a radical well-mixed $\D$-ideal in $R$ is finitely generated. The question is also equivalent to whether the ascending chain condition holds for radical well-mixed $\D$-ideals in $R$. For the sake of convenience, let us state it as a conjecture:
\begin{conjecture}\label{intro-conj}
Every strictly ascending chain of radical well-mixed $\D$-ideals in $R$ is finite.
\end{conjecture}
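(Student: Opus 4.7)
The plan is to attack the conjecture in the monomial case, which is what the abstract announces; the general statement appears out of reach by elementary methods, so I would restrict to monomial $\sigma$-ideals in the free $\sigma$-polynomial ring $R = k\{y_1,\ldots,y_n\}$. Regard $R$ as the ordinary polynomial ring in the countable family $\{\sigma^j(y_i)\}_{i\leq n,\,j\geq 0}$, so that a monomial $\sigma$-ideal is simply a $\sigma$-stable monomial ideal in this polynomial ring. The goal is then to prove the ACC for radical well-mixed monomial $\sigma$-ideals.

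My strategy has three stages. First, I would establish a finite prime decomposition: every radical well-mixed monomial $\sigma$-ideal is the intersection of finitely many monomial $\sigma$-primes, each itself well-mixed. This imitates the classical Stanley--Reisner fact that a squarefree monomial ideal is the intersection of its minimal primes, but it has to be carried out in the $\sigma$-setting, where both the radical operation and the well-mixed closure interact nontrivially with the shift. Second, I would classify these $\sigma$-prime components; one expects each to be generated by the full $\sigma$-orbits of some subset $S\subseteq\{y_1,\ldots,y_n\}$, possibly with additional shift data forced by well-mixedness, so that every prime is encoded by a finite combinatorial datum. Third, given an ascending chain $I_1\subsetneq I_2\subsetneq\cdots$, the associated sequence of prime-component collections forms a monotone sequence in a combinatorial poset that should be well-quasi-ordered; a Dickson- or Higman-type argument then forces stabilization after finitely many steps.

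The main obstacle will be the first stage, where finiteness of the prime decomposition is the delicate issue. Monomial $\sigma$-ideals in $R$ need not be finitely generated as ordinary monomial ideals, so the classical argument (pick a generator $f$ and induct on $I:f$ and $I+(f)$) can fail to terminate, and without a bound on the number of minimal primes the third stage has nothing to grip on. To compensate, I would first establish finite generation of well-mixed monomial $\sigma$-ideals (the other result promised in the abstract), since this converts the problem back to one about finitely many squarefree generators, and then leverage that finiteness to bound the number of minimal primes, likely via induction on $n$ after stratifying the variables according to how they appear in a minimal generating set. Once the decomposition is in place, the passage from prime-component bookkeeping to ACC should be the comparatively routine application of a well-quasi-order on finite subsets of $\{1,\ldots,n\}\times\N$ modulo the shift action.
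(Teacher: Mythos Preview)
Your plan is correct and closely mirrors the paper's monomial-case argument: both establish finite generation, a finite prime decomposition, and then ACC. The differences are in emphasis and in the order of steps.

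The paper's route to the prime decomposition does not go through finite generation as you propose, but through the splitting identity $\langle F\rangle_r \cap \langle G\rangle_r = \langle FG\rangle_r$ (Proposition~\ref{drmi-prop}), which yields $I=\langle I,\Y^{\mathbf{u}_1}\rangle_r\cap\langle I,\Y^{\mathbf{u}_2}\rangle_r$ whenever $\Y^{\mathbf{u}_1+\mathbf{u}_2}\in I$; iterating this and then applying a Dickson-type argument gives the finite intersection of primes directly. The prime components turn out to be $\mathfrak{m}^{\mathbf{b}}=[y_i^{x^{b_i}}:b_i\neq-1]$ for $\mathbf{b}\in(\N\cup\{-1\})^n$, i.e.\ generated by \emph{tails} of orbits rather than full orbits---this is precisely the shift data you anticipated, though note these are prime $\sigma$-ideals but not $\sigma$-prime (they fail reflexivity when some $b_i>0$). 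Finally, your stage~3 is more elaborate than needed: once the decomposition is in hand, the paper simply deduces finite generation of every radical well-mixed monomial $\sigma$-ideal (Corollary~\ref{drmi-cor}) and obtains ACC by the standard union argument (Corollary~\ref{drdi-cor}), with no direct well-quasi-order on prime-component collections required.
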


Also in \cite[Section 4.6]{Hrushovski1}, Ehud Hrushovski proved that the answer is yes under some additional assumptions on $R$. In \cite{levin}, Alexander Levin showed that the ascending chain condition does not hold if we drop the radical condition. The counter example given by Levin is a well-mixed $\D$-ideal generated by binomials. In \cite[Section 9]{wibmer1}, Michael Wibmer showed that if $R$ can be equipped with the structure of a $k$-$\D$-Hopf algebra, then Conjecture \ref{intro-conj} is valid.

The main result of this paper is that a well-mixed $\D$-ideal generated by monomials in a $\D$-polynomial ring is finitely generated. Furthermore, we give a finite prime decomposition of radical well-mixed monomial $\D$-ideals. As a consequence, Conjucture \ref{intro-conj} is valid for radical well-mixed monomial $\D$-ideals in a $\D$-polynomial ring.

The paper will be organized as follows. In Section 2, we list some basic facts from difference algebra. In Section 3, several basic properties of monomial $\D$-ideals are proved. In Section 4, we will give a counter example which shows that the well-mixed closure of a monomial $\D$-ideal may not be a monomial $\D$-ideal and prove the finitely generated property of well-mixed $\D$-ideals generated by monomials. In Section 5, we give a finite prime decomposition of radical well-mixed monomial $\D$-ideals. In Section 6, a reflexive prime decomposition of perfect monomial $\D$-ideals will be given. At last, in Section 7, the Alexander Duality for monomial ideals is generalized to the difference case.

\section{Preliminaries}
In this section, we list some basic notions and facts from difference algebra. For more details please refer to \cite{wibmer}. All rings in this paper will be assumed to be commutative and unital.

A {\em difference ring} or {\em $\sigma$-ring} for short $(R,\sigma)$, is a ring $R$ together with a ring endomorphism $\sigma\colon R\rightarrow R$. If $R$ is a field, then we call it a {\em difference field}, or a {\em $\sigma$-field} for short. We usually omit $\sigma$ from the notation, simply refer to $R$ as a $\sigma$-ring or a $\sigma$-field. In this paper, $k$ is always assumed to be a $\D$-field of characteristic $0$.

Following \cite{dd-tdv}, we use the notation of symbolic exponents. Let $x$ be an algebraic indeterminate and $p=\sum_{i=0}^s c_i x^i\in\N[x]$. For $a$ in a $\sigma$-ring $R$, denote $a^p = \prod_{i=0}^s (\sigma^i(a))^{c_i}$. It is easy to check that for $p, q\in\N[x], a^{p+q}=a^{p} a^{q}, a^{pq}= (a^{p})^{q}$.

\begin{definition}
Let $R$ be a $\D$-ring. An ideal $I$ of $R$ is called a {\em $\D$-ideal} if for $a\in R$, $a\in I$ implies $a^x\in I$. Suppose $I$ is a $\D$-ideal of $R$, then $I$ is called
\begin{itemize}
  \item {\em reflexive} if $a^x\in I$ implies $a\in I$ for $a\in R$;
  \item {\em well-mixed} if $ab\in I$ implies $ab^x\in I$ for $a,b\in R$;
  \item {\em perfect} if $a^g\in I$ implies $a\in I$ for $a\in R, g\in \N[x]\backslash\{0\}$;
  \item {\em $\D$-prime} if $I$ is reflexive and a prime ideal as an algebraic ideal.
\end{itemize}
\end{definition}

We give some basic properties about these $\D$-ideals and omit the proofs which can be found in \cite[Chapter 1]{wibmer}.
\begin{lemma}
{\color{white} r}
\begin{enumerate}[(i)]
  \item A $\D$-ideal is perfect if and only if it is reflexive, radical, and well-mixed;
  \item A $\D$-prime $\D$-ideal is perfect;
  \item A prime $\D$-ideal is radical, well-mixed.
\end{enumerate}
\end{lemma}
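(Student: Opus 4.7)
The plan is to handle (iii) and (ii) quickly so the work concentrates on (i). For (iii), suppose $I$ is a prime $\D$-ideal: then $I$ is automatically radical (being prime), and for well-mixedness, if $ab\in I$ then primality gives $a\in I$ or $b\in I$; in either case $ab^x\in I$, since in the second case $b^x\in I$ because $I$ is a $\D$-ideal. Part (ii) then follows from (i) and (iii): a $\D$-prime $\D$-ideal is reflexive and prime, hence by (iii) also radical and well-mixed, and so perfect by (i).

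The forward direction of (i), that perfect implies reflexive, radical, and well-mixed, is routine: reflexivity is the case $g=x$, and radicality is the case $g\in\N\setminus\{0\}$. For well-mixedness, suppose $ab\in I$, so $(ab)^x=a^xb^x\in I$ because $I$ is a $\D$-ideal. The key identity is
\[
(ab^x)^{1+x}=ab^x\cdot a^xb^{x^2}=(a^xb^x)(ab^{x^2})=(ab)^x\cdot(ab^{x^2})\in I,
\]
and perfectness applied with $g=1+x$ then yields $ab^x\in I$.

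For the reverse direction of (i) I would induct on $s=\deg g$, writing $g=c_0+c_1x+\cdots+c_sx^s\in\N[x]\setminus\{0\}$. The base case $s=0$ gives $a^{c_0}\in I$ with $c_0\ge 1$, so $a\in I$ by radicality. For the inductive step, if $c_0=0$ then $g=x\cdot(g/x)$ and $a^g=(a^{g/x})^x\in I$, so reflexivity produces $a^{g/x}\in I$ with $\deg(g/x)=s-1$, and induction closes this case. If $c_0\ge 1$, write $a^g=a^{c_0}\cdot a^h$ with $h=g-c_0$, and apply well-mixedness with $u=a^h$, $v=a^{c_0}$ to obtain
\[
a^{h+c_0 x}=u\,v^x\in I.
\]
Since $h+c_0x$ is divisible by $x$, the element $p=(h+c_0x)/x\in\N[x]$ has degree $s-1$ and is nonzero, and one more application of reflexivity gives $a^p\in I$, after which the induction hypothesis finishes.

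The main obstacle is spotting the right manipulations: the identity $(ab^x)^{1+x}=(ab)^x(ab^{x^2})$ that bridges from $ab\in I$ to $ab^x$ being killed by a nonzero symbolic power, and the two-step reduction (a well-mixed shift from $a^g$ to $a^{h+c_0x}$ followed by reflexive descent along the factor $x$) that lowers $\deg g$ in the inductive step. Everything else is bookkeeping with the identities $a^{p+q}=a^pa^q$ and $a^{pq}=(a^p)^q$.
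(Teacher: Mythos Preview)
Your argument is correct in all three parts. The identity $(ab^x)^{1+x}=(ab)^x\cdot ab^{x^2}$ does what you claim, and your induction on $\deg g$ for the reverse implication in (i) is clean: the well-mixed shift $a^g\mapsto a^{h+c_0x}$ followed by a reflexive descent along the factor $x$ indeed drops the degree by one while keeping the exponent nonzero.

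As for comparison with the paper: there is nothing to compare, because the paper does not prove this lemma. It explicitly omits the proof and refers the reader to Chapter~1 of Wibmer's notes. Your self-contained argument is therefore strictly more than what the paper provides. One small remark: the paper records separately (as the next lemma) the criterion that $I$ is perfect iff $a^{x+1}\in I\Rightarrow a\in I$; your proof of the reverse direction of (i) is essentially a direct degree-reduction argument that bypasses this intermediate characterization, which is perfectly fine and arguably more transparent.
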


\begin{lemma}
Let $R$ be a $\D$-ring. A $\D$-ideal $I$ of $R$ is perfect if and only if $a^{x+1}\in I$ implies $a\in I$ for $a\in R$.
\end{lemma}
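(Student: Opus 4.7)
The $\Rightarrow$ direction is immediate: since $x+1 \in \N[x]\setminus\{0\}$, perfectness applied to $g=x+1$ is exactly the hypothesis. For the converse, my plan is to deduce each of the three defining properties of a perfect $\D$-ideal (reflexive, well-mixed, radical) separately from the hypothesis and then invoke part~(i) of the previous lemma. Reflexivity is the easiest: if $a^x \in I$ then $a^{x+1} = a \cdot a^x \in I$ (as $I$ is an ideal), so the hypothesis immediately yields $a \in I$.

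For well-mixedness, suppose $ab \in I$; the trick is to apply the hypothesis to $c = ab^x$. A direct computation using the symbolic-exponent rules gives
$$c^{x+1} = c \cdot c^x = (ab^x)(a^x b^{x^2}) = (ab^{x^2})(a^x b^x) = (ab^{x^2})\,(ab)^x,$$
and since $ab \in I$ and $I$ is a $\D$-ideal, $(ab)^x \in I$; consequently $c^{x+1} \in I$ and the hypothesis delivers $ab^x \in I$. For radicality, suppose $a^n \in I$ with $n \ge 1$. Expanding $(x+1)^n = \sum_{i=0}^n \binom{n}{i}x^i$ in the symbolic-exponent notation yields
$$a^{(x+1)^n} = \prod_{i=0}^n \bigl(a^{x^i}\bigr)^{\binom{n}{i}}.$$
The $i=1$ factor equals $(a^x)^n = (a^n)^x$, which lies in $I$ because $I$ is a $\D$-ideal containing $a^n$; hence the whole product $a^{(x+1)^n}$ lies in $I$. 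Using the identity $a^{(x+1)^k} = \bigl(a^{(x+1)^{k-1}}\bigr)^{x+1}$ and applying the hypothesis $n$ times in succession then peels the exponent down to $1$, giving $a \in I$.

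The individual computations are all short; the only conceptual obstacle is spotting the right auxiliary element to feed into the hypothesis in each step, namely $c = ab^x$ for well-mixedness and $a^{(x+1)^{n-1}}$ (iteratively) for radicality. Both choices succeed for the same underlying reason: raising to the exponent $x+1$ introduces one extra application of $\sigma$ that can be absorbed by the $\D$-ideal property on the originally assumed element ($ab$ or $a^n$), at which point the hypothesis supplies a ``square root'' with respect to the operator $x+1$.
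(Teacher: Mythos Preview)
Your proof is correct. Each of the three verifications (reflexive, well-mixed, radical) is sound: the reflexive step is immediate; for well-mixedness your computation $c^{x+1}=(ab^{x^2})(ab)^x$ is valid since $(ab^x)^x=a^xb^{x^2}$ and $(ab)^x=a^xb^x$; and for radicality the identity $a^{(x+1)^k}=(a^{(x+1)^{k-1}})^{x+1}$ follows from the rule $a^{pq}=(a^p)^q$, while the factor $(a^x)^n=(a^n)^x$ indeed lies in $I$ because $\sigma$ is a ring endomorphism and $I$ is a $\sigma$-ideal.

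As for comparison with the paper: there is nothing to compare. The paper does not prove this lemma; it is stated in the preliminaries section with the blanket remark that proofs ``can be found in \cite[Chapter 1]{wibmer}.'' Your argument via reflexive~$+$ well-mixed~$+$ radical and the characterization in part~(i) of the preceding lemma is the standard route, and is essentially what one finds in Wibmer's notes.
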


Let $R$ be a $\D$-ring. If $F\subseteq R$ is a subset of $R$, denote the minimal ideal containing $F$ by $(F)$, the minimal $\D$-ideal containing $F$ by $[F]$ and denote the minimal radical $\D$-ideal, the minimal reflexive $\D$-ideal, the minimal well-mixed $\D$-ideal, the minimal radical well-mixed $\D$-ideal, the minimal perfect $\D$-ideal containing $F$ by $\sqrt{F}, F^*, \langle F\rangle$, $\langle F\rangle_r$, $\{F\}$ respectively, which are called the {\em radical closure}, the {\em reflexive closure}, the {\em well-mixed closure}, the {\em radical well-mixed closure}, the {\em perfect closure} of $F$ respectively.

It can be checked that $\sqrt{F}^*=\sqrt{F^*}$ and $\{F\}=\langle F\rangle_r^*$.

Let $k$ be a $\sigma$-field. Suppose $y=\{y_1,\ldots,y_n\}$ is a set of $\sigma$-indeterminates over $k$. Then the {\em $\sigma$-polynomial ring} over $k$ in $y$ is the polynomial ring in the variables $y,\sigma(y),\sigma^2(y),\ldots$. It is denoted by
\[k\{y\}=k\{y_1,\ldots,y_n\}\]
and has a natural $k$-$\sigma$-algebra structure.

\section{Basic Properties of Monomial Difference Ideals}
In the rest of this paper, unless otherwise specified, $R$ always refers to the $\D$-polynomial ring $k\{y_1,\ldots,y_n\}$. Denote $\N^*=\N\backslash\{0\}$ and $\N[x]^*=\N[x]\backslash\{0\}$.
\begin{definition}
Suppose $\mathbf{u}=(u_1,\ldots,u_n)\in \N[x]^n$. A {\em monomial} in $R$ is a product $\Y^{\mathbf{u}}=y_1^{u_1}\ldots y_n^{u_n}$. A $\D$-ideal $I\subseteq R$ is called a {\em monomial $\D$-ideal} if it is generated by monomials.
\end{definition}

As a vector space over $k$, we can write the $\D$-polynomial ring $R$ as
\[R=k[\N[x]^n]=\oplus_{\mathbf{u}\in \N[x]^n}R_{\mathbf{u}}=\oplus_{\mathbf{u}\in \N[x]^n}k\Y^{\mathbf{u}},\]
where $R_{\mathbf{u}}=k\Y^{\mathbf{u}}$ is the vector subspace of $R$ spanned by the monomial $\Y^{\mathbf{u}}$. Since $R_{\mathbf{u}}\cdot R_{\mathbf{v}}\subseteq R_{\mathbf{u}+\mathbf{v}}$, we see that $R$ is an $\N[x]^n$-graded ring. A monomial $\D$-ideal $I$ defined above is just a graded $\D$-ideal of $R$, which means there exists a subset $S\subseteq \N[x]^n$ such that $I=k[S]:=\oplus_{\mathbf{u}\in S}k\Y^{\mathbf{u}}$. Such $S$ is called the {\em support set} of $I$.

For a set of monomials $F$, we denote $C(F)=\{\mathbf{u}\in \N[x]^n\mid \Y^{\mathbf{u}}\in F\}$.

\begin{lemma}
Let $S\subseteq \N[x]^n$. Then $I=k[S]$ is a monomial $\D$-ideal if and only if $S$ satisfies
\begin{enumerate}[(1)]
\item if $\mathbf{u}\in S, \mathbf{v}\in \N[x]^n$, then $\mathbf{u}+\mathbf{v}\in S$;
\item if $\mathbf{u}\in S$, then $x\mathbf{u}\in S$.
\end{enumerate}
\end{lemma}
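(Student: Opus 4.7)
The plan is to verify both implications directly from the definitions, using the fact that $R$ is the $k$-span of the monomials $\Y^{\mathbf{u}}$ and that multiplication and the endomorphism $\sigma$ act on these monomials via simple operations on the exponent vectors: namely $\Y^{\mathbf{u}}\cdot\Y^{\mathbf{v}}=\Y^{\mathbf{u}+\mathbf{v}}$ and $\sigma(\Y^{\mathbf{u}})=\Y^{x\mathbf{u}}$ (the second because $\sigma(y_i^{u_i})=y_i^{xu_i}$ by the symbolic-exponent convention).

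For the forward direction I would assume $I=k[S]$ is a monomial $\sigma$-ideal. Then for any $\mathbf{u}\in S$ we have $\Y^{\mathbf{u}}\in I$. Since $I$ is an ideal, $\Y^{\mathbf{u}}\cdot\Y^{\mathbf{v}}=\Y^{\mathbf{u}+\mathbf{v}}\in I$ for every $\mathbf{v}\in\N[x]^n$; because $I=\oplus_{\mathbf{w}\in S}k\Y^{\mathbf{w}}$, this forces $\mathbf{u}+\mathbf{v}\in S$, giving condition (1). Since $I$ is also a $\sigma$-ideal, $\sigma(\Y^{\mathbf{u}})=\Y^{x\mathbf{u}}\in I$, which similarly forces $x\mathbf{u}\in S$, giving condition (2).

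For the converse, assume $S$ satisfies (1) and (2). First I would verify that $I=k[S]$ is an ideal of $R$: any $f\in I$ is a finite $k$-linear combination $f=\sum_i c_i\Y^{\mathbf{u}_i}$ with $\mathbf{u}_i\in S$, and any $g\in R$ is a finite $k$-linear combination $g=\sum_j d_j\Y^{\mathbf{v}_j}$; then $fg=\sum_{i,j}c_id_j\Y^{\mathbf{u}_i+\mathbf{v}_j}$ lies in $I$ because (1) ensures each exponent $\mathbf{u}_i+\mathbf{v}_j$ is in $S$. Next, $\sigma(f)=\sum_i\sigma(c_i)\Y^{x\mathbf{u}_i}$, and the exponents $x\mathbf{u}_i$ lie in $S$ by (2), so $\sigma(f)\in I$, proving $I$ is a $\sigma$-ideal. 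Finally, $I$ is generated (even as a $k$-vector space) by the monomials $\{\Y^{\mathbf{u}}\mid \mathbf{u}\in S\}$, so it is a monomial $\sigma$-ideal.

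There is no real obstacle here, as the lemma is essentially a bookkeeping translation of the ideal and $\sigma$-stability axioms into combinatorial closure conditions on the support set $S$; the only point worth flagging is the role of symbolic exponents, which make the action of $\sigma$ on the exponent lattice $\N[x]^n$ literally equal to multiplication by $x$, so that the $\sigma$-closure of $I$ becomes the visibly combinatorial condition (2).
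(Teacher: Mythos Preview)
Your proof is correct and follows essentially the same approach as the paper's: both directions are verified directly by translating ideal-closure and $\sigma$-closure into the exponent conditions (1) and (2) via $\Y^{\mathbf{u}}\Y^{\mathbf{v}}=\Y^{\mathbf{u}+\mathbf{v}}$ and $\sigma(\Y^{\mathbf{u}})=\Y^{x\mathbf{u}}$. The paper is simply terser, declaring the necessity ``obvious'' and phrasing the sufficiency in terms of supports, but the content is the same.
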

\begin{proof}
The necessity is obvious. For the sufficiency, let $a\in I$ and $b\in R$. Then $C(\supp(a))\subset S$. So by (1) and (2), $C(\supp(ab))\subset S$ and $C(\supp(a^x))\subset S$ which implies that $ab\in I$ and $a^x\in I$.
\end{proof}

A subset $S\subseteq \N[x]^n$ satisfying the above conditions is called a {\em character set}.

For $a\in R$, if we write $a=\sum a_{\bu}\Y^{\mathbf{u}}, a_{\bu}\in k$, then
\[\supp(a):=\{\Y^{\mathbf{u}}\mid a_{\bu}\neq 0\}\]
is called the {\em support} of $a$.
\begin{lemma}
Let $I$ be a $\D$-ideal of $R$. Then the followings are equivalent:
\begin{enumerate}[(a)]
  \item $I$ is a monomial $\D$-ideal;
  \item For $a\in R$, $a\in I$ if and only if $\supp(a)\subset I$.
\end{enumerate}
\end{lemma}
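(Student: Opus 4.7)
The plan is to prove the two directions separately and notice that everything follows almost directly from unpacking the $\N[x]^n$-graded structure of $R$.

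For the direction (a) $\Rightarrow$ (b), I would use the description of a monomial $\sigma$-ideal already established in the paper, namely that $I=k[S]$ for some character set $S\subseteq\N[x]^n$. If $\supp(a)\subset I$, then every monomial appearing in $a$ lies in $I$, so the $k$-linear combination $a$ lies in $I$ as well (this direction uses only that $I$ is a $k$-subspace). Conversely, if $a\in I=\oplus_{\bu\in S}k\Y^{\bu}$ and we write $a=\sum a_{\bu}\Y^{\bu}$, then the definition of the direct sum forces $a_{\bu}=0$ whenever $\bu\notin S$, so each monomial in $\supp(a)$ is of the form $\Y^{\bu}$ with $\bu\in S$, i.e. lies in $I$. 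Hence $\supp(a)\subset I$.

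For the direction (b) $\Rightarrow$ (a), I would let $M=\{\Y^{\bu}\mid \Y^{\bu}\in I\}$ be the set of monomials of $R$ that lie in $I$, and show $I=[M]$. The inclusion $[M]\subset I$ is obvious since $M\subset I$ and $I$ is a $\sigma$-ideal. For the reverse, take $a\in I$. By hypothesis (b), every monomial in $\supp(a)$ lies in $I$, hence in $M$, so $a$ is a $k$-linear combination of elements of $M$, which places $a$ in $(M)\subset[M]$. Thus $I=[M]$ is generated by monomials, proving (a).

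I do not anticipate any real obstacle: the argument is essentially a bookkeeping exercise in the grading $R=\oplus_{\bu\in\N[x]^n}k\Y^{\bu}$. The only subtle point worth emphasizing is that (b) is itself a biconditional, of which one direction (from $\supp(a)\subset I$ to $a\in I$) is automatic from $I$ being a $k$-subspace, while the other (from $a\in I$ to $\supp(a)\subset I$) is the content of being a graded/monomial ideal in the $\N[x]^n$-grading.
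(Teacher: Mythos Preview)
Your proof is correct and follows essentially the same idea as the paper's: for (a)$\Rightarrow$(b) the paper argues directly from a monomial generating set (writing $a=\sum a_ih_i$ and observing $\supp(a_ih_i)\subset I$), while you invoke the graded description $I=k[S]$ already stated in the paper; for (b)$\Rightarrow$(a) the paper replaces each element of a generating set by its support, whereas you take the set of all monomials in $I$. These are cosmetic differences and the underlying content is identical.
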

\begin{proof}
(a)$\Rightarrow$(b). One direction is obvious. For the other one, let $a\in I$. There exist monomials $h_1,\ldots,h_m\in I$ and $a_1,\ldots,a_m\in R$ such that $a=\sum_{i=1}^ma_ih_i$. It follows that $\supp(a)\subseteq\cup_{i=1}^m\supp(a_ih_i)$. Note that $\supp(a_ih_i)\subset I$ for all $i$, since each monomial in $\supp(a_ih_i)$ is a multiple of $h_i$ and hence belongs to $I$. It follows that $\supp(a)\subset I$ as desired.

(b)$\Rightarrow$(a). Let $F$ be a set of generators of $I$. Since for any $a\in F$, $\supp(a)\subset I$, it follows that $\cup_{a\in F}\supp(a)$ is a set of monomial generators of $I$.
\end{proof}

\begin{lemma}
If $I_1$ and $I_2$ are monomial $\D$-ideals, then $I_1+I_2$ and $I_1\cap I_2$ are also monomial $\D$-ideals.
\end{lemma}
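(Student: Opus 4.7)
The plan is to invoke the two characterizations already established: the character-set description (so that a monomial $\D$-ideal corresponds to $S\subseteq\N[x]^n$ closed under addition by arbitrary elements of $\N[x]^n$ and under multiplication by $x$), and the equivalence that $I$ is monomial iff membership in $I$ is detected by the support (i.e.\ $a\in I\Leftrightarrow \supp(a)\subset I$).

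For $I_1+I_2$, I would simply note that if $F_1$ and $F_2$ are sets of monomial generators of $I_1$ and $I_2$ respectively, then $F_1\cup F_2$ is a set of monomial generators of $I_1+I_2$, so $I_1+I_2$ is monomial by definition. Equivalently, writing $I_j=k[S_j]$ with $S_j$ a character set, one has $I_1+I_2=k[S_1\cup S_2]$, and the union $S_1\cup S_2$ is immediately closed under addition by elements of $\N[x]^n$ and under multiplication by $x$.

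For $I_1\cap I_2$, I would use the support characterization. Let $a\in I_1\cap I_2$; then $a\in I_j$ for $j=1,2$, so $\supp(a)\subset I_j$ for each $j$, hence $\supp(a)\subset I_1\cap I_2$. Conversely, if $\supp(a)\subset I_1\cap I_2$, then in particular $\supp(a)\subset I_j$ for each $j$, so by the monomiality of $I_j$ we get $a\in I_j$, and therefore $a\in I_1\cap I_2$. By the previous lemma, this equivalence characterizes monomiality, so $I_1\cap I_2$ is a monomial $\D$-ideal. One could equivalently observe at the level of support sets that $I_1\cap I_2=k[S_1\cap S_2]$ and that the intersection of two character sets is again a character set.

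There is essentially no obstacle here: both statements reduce immediately to set-theoretic manipulations once the equivalent characterizations of monomiality are in hand. The only thing to be slightly careful about is to quote the correct direction of the support lemma when handling the intersection, since the sum case is a direct consequence of the definition but the intersection case genuinely needs the nontrivial equivalence.
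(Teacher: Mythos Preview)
Your proof is correct and essentially matches the paper's approach: the paper's entire proof is the one-line observation that $I_1+I_2=k[S_1\cup S_2]$ and $I_1\cap I_2=k[S_1\cap S_2]$, which you also state. Your additional route for the intersection via the support-characterization lemma is a valid alternative, but it is not needed beyond what the paper does.
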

\begin{proof}
Suppose $I_1=k[S_1]$ and $I_2=k[S_2]$. Then $I_1+I_2=k[S_1\cup S_2]$, $I_1\cap I_2=k[S_1\cap S_2]$.
\end{proof}

If $I=k[S]$ is a monomial $\D$-ideal, then the conditions for $I$ to be radical, reflexive, perfect and prime can be described using the support set $S$. To show this, we first define an order on $\N[x]^n$. Let $f=\sum_{i=0}^lf_ix^i, g=\sum_{i=0}^mg_ix^i\in\N[x]$. Let $k$ be a positive integer with $k>\maximal\{l,m\}$, and set $f_i=0$ for $l+1\leqslant i\leqslant k$, $g_i=0$ for $m+1\leqslant i\leqslant k$. Then define $f<g$ if there exists $r\in\N, 0\le r\le k$ such that $f_i=g_i$ for $i=r+1,\ldots,k$ and $f_r<g_r$. Extend the order $<$ to $\N[x]^n$ by comparing $\bu=(u_1,\ldots,u_n)$ and $\bv=(v_1,\ldots,v_n)\in \N[x]^n$ with respect to the lexicographic order. Obviously, this is a total order on $\N[x]^n$ and has the following properties.
\begin{lemma}
The order $<$ defined above satisfies:
\begin{enumerate}[(1)]
\item For $\bu_1,\bu_2,\bv_1,\bv_2\in\N[x]^n$, if $\mathbf{u}_1<\mathbf{v}_1,\mathbf{u}_2\leqslant\mathbf{v}_2$, then $\mathbf{u}_1+\mathbf{u}_2<\mathbf{v}_1+\mathbf{v}_2$;
\item For $\bu,\bv\in\N[x]^n$, if $\mathbf{u}<\mathbf{v}$, then $x\mathbf{u}<x\mathbf{v}$.
\end{enumerate}
\end{lemma}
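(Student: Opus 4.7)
The plan is to reduce both claims to the one-variable case, i.e., to the order on $\N[x]$, and then lift to $\N[x]^n$ by a routine analysis of the lexicographic extension. Both assertions are essentially bookkeeping once the one-variable case is in hand, so I will spend most of the effort there.

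For the one-variable case of (1), I would fix $f_1<g_1$ and $f_2\leqslant g_2$ in $\N[x]$ and choose a common degree bound $k$ so that all four polynomials are recorded by coefficient vectors $(h_0,\ldots,h_k)$. Let $r_1$ be the largest index where $f_1$ and $g_1$ differ; by hypothesis $(f_1)_{r_1}<(g_1)_{r_1}$ and $(f_1)_i=(g_1)_i$ for $i>r_1$. If $f_2=g_2$, then $f_1+f_2$ and $g_1+g_2$ still agree above $r_1$ and at $r_1$ one has $(f_1)_{r_1}+(f_2)_{r_1}<(g_1)_{r_1}+(g_2)_{r_1}$, so $f_1+f_2<g_1+g_2$. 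If $f_2<g_2$, let $r_2$ be the analogous top-difference index and set $s=\max(r_1,r_2)$; the sums agree for $i>s$ (since each summand does), and at $i=s$ one summand contributes a strict inequality while the other contributes an equality or another strict inequality, giving $(f_1+f_2)_s<(g_1+g_2)_s$. Either way $f_1+f_2<g_1+g_2$. For (2) in one variable, multiplication by $x$ merely shifts every coefficient up by one index, so a top-difference index $r$ between $f$ and $g$ becomes a top-difference index $r+1$ between $xf$ and $xg$ with the same sign of inequality.

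To pass to $\N[x]^n$, I would let $j$ be the first coordinate where $\bu_1$ and $\bv_1$ differ in the lexicographic comparison, so $(\bu_1)_j<(\bv_1)_j$ in $\N[x]$ and $(\bu_1)_i=(\bv_1)_i$ for $i<j$. For (1), let $j'$ be the first disagreement coordinate of $\bu_2,\bv_2$ (or $\infty$ if $\bu_2=\bv_2$). If $j\leqslant j'$, coordinates $i<j$ of $\bu_1+\bu_2$ and $\bv_1+\bv_2$ coincide, and at coordinate $j$ the one-variable claim applies (with either $(\bu_2)_j=(\bv_2)_j$ or both factors strictly smaller). If $j>j'$, then coordinate $j'$ provides the first strict decrease in $\bu_1+\bu_2$. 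For (2), since multiplication by $x$ is applied coordinatewise, the first coordinate of disagreement is preserved and the one-variable shift argument gives $x\bu<x\bv$ directly.

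No single step looks substantial; the mild nuisance is just keeping track of the two possibilities ``equal versus strictly less'' in each coordinate and in each sum of coefficient vectors. I would therefore organize the write-up as: (a) prove the one-variable analogue of (1) by the $r_1,r_2$ case split above; (b) prove the one-variable analogue of (2) by the index-shift observation; (c) a short paragraph explaining how the lex-extension to $\N[x]^n$ inherits both properties coordinatewise. No machinery beyond the definitions is needed.
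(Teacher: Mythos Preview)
Your proposal is correct. The paper does not actually supply a proof of this lemma; it is stated immediately after the definition of the order and treated as self-evident, with the text moving directly on to the definition of $\deg(a)$. Your write-up simply unpacks the definitions in the natural way (coefficient-by-coefficient in $\N[x]$, then coordinate-by-coordinate for the lexicographic extension to $\N[x]^n$), which is exactly what a reader would do if asked to verify the claim, so there is no meaningful divergence to discuss.
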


Let $a\in R$. We define $\deg(a)$ to be the maximal element with respect to $<$ in $C(\supp(a))$.
\begin{proposition}\label{dpmi-prop1}
Let $I=k[S]$ be a monomial $\D$-ideal of $R$. Then:
\begin{enumerate}[(1)]
\item $I$ is radical if and only if for $m\in \N^*$ and $\mathbf{u}\in \N[x]^n$, $m\mathbf{u}\in S$ implies $\mathbf{u}\in S$;
\item $I$ is reflexive if and only if for $\mathbf{u}\in \N[x]^n$, $x\mathbf{u}\in S$ implies $\mathbf{u}\in S$;
\item $I$ is perfect if and only if for $g\in \N[x]^*$ and $\mathbf{u}\in \N[x]^n$, $g\mathbf{u}\in S$ implies $\mathbf{u}\in S$;
\item $I$ is prime if and only if for $\mathbf{u},\mathbf{v}\in \N[x]^n$, $\mathbf{u}+\mathbf{v}\in S$ implies $\mathbf{u}\in S$ or $\mathbf{v}\in S$.
\end{enumerate}
\end{proposition}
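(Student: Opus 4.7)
The four ``only if'' directions are essentially definitional: applying each closure condition to a single generator $\Y^{\bu}$ of $I$ and using $(\Y^{\bu})^{p}=\Y^{p\bu}$ (for $p\in\N[x]$) will immediately yield the stated condition on $S$, with the prime case instead using $\Y^{\bu+\bv}=\Y^{\bu}\Y^{\bv}$. So the substantive content is in the four converse directions.

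My plan for the converses is to develop a short leading-term calculus and then run the same one-paragraph induction four times. Specifically, I will use the total order $<$ of Lemma 3.5 to extend $\deg(a)$ to any nonzero $a\in R$ as the maximum of $C(\supp(a))$, and check that for nonzero $a,b\in R$ with $\bu^{*}=\deg(a)$ and $\bv^{*}=\deg(b)$,
\[
\deg(ab)=\bu^{*}+\bv^{*},\qquad \deg(a^{p})=p\bu^{*}\quad(p\in\N[x]^{*}),
\]
the corresponding leading coefficients being $a_{\bu^{*}}b_{\bv^{*}}$ and $(a_{\bu^{*}})^{p}$ respectively. Both are nonzero because $k$ is a field (of characteristic zero), which in particular forces $\sigma$ to be injective on $k$. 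These identities follow from the two monotonicity clauses of Lemma 3.5 applied to each pair of terms contributing to the product.

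With this in hand, each converse is a trimming induction on $|\supp(a)|$ (and on $|\supp(a)|+|\supp(b)|$ for part (4)). I describe part (4), since the other three differ only cosmetically. Assume the combinatorial condition, suppose $ab\in I$ with $a,b\neq 0$, and read off $\bu^{*}+\bv^{*}=\deg(ab)\in S$ from the fact that $I=k[S]$ is a monomial $\D$-ideal. By hypothesis either $\bu^{*}\in S$ or $\bv^{*}\in S$; say $\bu^{*}\in S$, so $a_{\bu^{*}}\Y^{\bu^{*}}\in I$ and $a':=a-a_{\bu^{*}}\Y^{\bu^{*}}$ has $a'b\in I$ with strictly smaller support, and induction finishes. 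For parts (1), (2), and (3) the trimming is identical, and what one needs additionally is $a^{p}\equiv (a')^{p}\pmod{I}$ with $p$ equal to $m\in\N^{*}$, to $x$, or to an arbitrary element of $\N[x]^{*}$; this follows factor-by-factor from $a^{p}=\prod_{i}\sigma^{i}(a)^{c_{i}}$ together with $\sigma^{i}(I)\subseteq I$ (since $I$ is a $\D$-ideal).

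The only real obstacle I anticipate is the leading-term calculus itself, especially verifying $\deg(a^{p})=p\bu^{*}$ with nonzero leading coefficient in the symbolic-exponent setting; once that is in place, each of the four converses collapses to a one-line trim.
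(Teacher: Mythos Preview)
Your proposal is correct and follows essentially the same approach as the paper: isolate an extremal monomial of $a$ (or of $a,b$ in part~(4)) using the order of Lemma~3.5, observe that it survives in $a^{m}$, $a^{x}$, $a^{g}$, or $ab$ and hence lies in $S$, peel it off, and induct on the number of terms. The only cosmetic differences are that the paper picks the \emph{minimal} term for (1) and (3) rather than the leading term, handles (2) directly without induction (since $\sigma$ acts termwise), and for (3) reduces via Lemma~2.3 to the single exponent $g=x+1$ instead of treating general $g\in\N[x]^{*}$ as you do.
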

\begin{proof}
(1) ``$\Rightarrow$" follows from the definition of radical ideals.

      ``$\Leftarrow$". Suppose that $a=\sum_{i=1}^ka_i\Y^{\mathbf{u}_i}\in R, a_i\neq 0$, $\mathbf{u}_1<\ldots<\mathbf{u}_k$ and $a^m\in I$. We will show that $a\in I$ by induction on the number of terms of $a$. When $k=1$, $a^m=(a_1\Y^{\mathbf{u}_1})^m=a_1^m\Y^{m\mathbf{u}_1}\in I$. Therefore, $\Y^{m\mathbf{u}_1}\in I$ and hence $m\mathbf{u}_1\in S$. So $\mathbf{u}_1\in S$ or equivalently $\Y^{\mathbf{u}_1}\in I$ which implies $a\in I$. For the inductive step, assume now $k\ge2$. Note that $a^m=a_1^m\Y^{m\mathbf{u}_1}+$the other terms. Since $m\mathbf{u}_1$ is minimal in the set of all possible combinations of $\mathbf{u}_{i_1}+\ldots+\mathbf{u}_{i_m}$, the monomial $\Y^{m\mathbf{u}_1}$ cannot be cancelled in the expression of $a^m$ and hence belongs to $\supp(a^m)$. Since $I$ is a monomial $\D$-ideal, $\supp(a^m)\subseteq I$ and hence $\Y^{m\mathbf{u}_1}\in I$ or equivalently $m\mathbf{u}_1\in S$. So $\mathbf{u}_1\in S$ and $\Y^{\mathbf{u}_1}\in I$. Consider $a'=a-a_1\Y^{\mathbf{u}_1}$ with $k-1$ terms. Since $(a')^m=(a-a_1\Y^{\mathbf{u}_1})^m=a^m-\Y^{\mathbf{u}_1}\cdot\ast\in I$, by the induction hypothesis, $a'\in I$. Thus $a=a'+a_1\Y^{\mathbf{u}_1}\in I$.

(2) ``$\Rightarrow$" follows from the definition of reflexive ideals.

      ``$\Leftarrow$". Suppose that $a=\sum_{i=1}^ka_i\Y^{\mathbf{u}_i}\in R, a_i\neq 0$ and $a^x\in I$. Since $a^x=\sum_{i=1}^ka_i^x\Y^{x\mathbf{u}_i}$ and $I$ is a monomial $\D$-ideal, it follows that $\Y^{x\mathbf{u}_i}\in I$ for every $i$. Therefore, $x\mathbf{u}_i\in S$ and hence $\mathbf{u}_i\in S$ for every $i$ which implies $\Y^\mathbf{u}_i\in S$ for every $i$. Thus $a\in I$.

(3) ``$\Rightarrow$" follows from the definition of perfect ideals.

      ``$\Leftarrow$". Suppose that $a=\sum_{i=1}^ka_i\Y^{\mathbf{u}_i}\in R, a_i\neq 0$, $\mathbf{u}_1<\ldots<\mathbf{u}_k$ and $a^{x+1}\in I$. We will prove that $a\in I$ by induction on the number of terms of $a$. When $k=1$, $a^{x+1}=(a_1\Y^{\mathbf{u}_1})^{x+1}=a_1^{x+1}\Y^{(x+1)\mathbf{u}_1}\in I$. Therefore, $\Y^{(x+1)\mathbf{u}_1}\in I$ and hence $(x+1)\mathbf{u}_1\in S$. So $\mathbf{u}_1\in S$ or equivalently $\Y^{\mathbf{u}_1}\in I$ which implies $a\in I$. For the inductive step, assume now $k\ge2$. Note that $a^{x+1}=a_1^{x+1}\Y^{(x+1)\mathbf{u}_1}+$the other terms. Since $\mathbf{u}_1<\ldots<\mathbf{u}_k$ and $x\mathbf{u}_1<\ldots<x\mathbf{u}_k$, $(x+1)\mathbf{u}_1$ is minimal in the set of all possible combinations of $\mathbf{u}_i+x\mathbf{u}_j$. So the monomial $\Y^{(x+1)\mathbf{u}_1}$ cannot be cancelled in the expression of $a^{x+1}$ and hence belongs to $\supp(a^{x+1})$. Since $I$ is a monomial $\D$-ideal, $\supp(a^{x+1})\subseteq I$ and hence $\Y^{(x+1)\mathbf{u}_1}\in I$ or equivalently $(x+1)\mathbf{u}_1\in S$. So $\mathbf{u}_1\in S$ and $\Y^{\mathbf{u}_1}\in I$. Consider $a'=a-a_1\Y^{\mathbf{u}_1}$ with $k-1$ terms. Since $(a')^{x+1}=a^{x+1}-a_1a^x\Y^{\mathbf{u}_1}-a a_1^x\Y^{x\mathbf{u}_1}+a_1^{x+1}\Y^{(x+1)\mathbf{u}_1}\in I$, by the induction hypothesis, $a'\in I$. Thus $a=a'+a_1\Y^{\mathbf{u}_1}\in I$.

(4) ``$\Rightarrow$" follows from the definition of prime ideals.

      ``$\Leftarrow$". Suppose it's not true, so there exists $a$ and $b$ in $R$ such that $ab\in I$ but $a\notin I$ and $b\notin I$. Let $a$ and $b$ are such a pair such that $\deg(a)+\deg(b)$ is minimal. Since $I$ is a monomial $\D$-ideal, $\supp(ab)\subset I$. In particular, the highest degree term is in $I$. The highest degree term is just the product of the leading terms of $a$ and $b$, which we'll call $\ld(a)$ and $\ld(b)$. So $\ld(a)\cdot\ld(b)\in I$, and since they are monomials, we see that either $\ld(a)$ or $\ld(b)$ in $I$. Without loss of generality, assume it's $\ld(a)$. In that case $(a-\ld(a))\cdot b\in I$, but neither $a-\ld(a)$ nor $b$ is in $I$, and this violates the minimality of the pair $a$ and $b$.
\end{proof}

Suppose that $I=k[S]$ is a monomial $\D$-ideal. If $I$ is radical, reflexive, well-mixed, perfect, or prime, then we call the corresponding support set $S$ to be {\em radical}, {\em reflexive}, {\em well-mixed}, {\em perfect}, {\em prime} respectively.

Let $S$ be a subset of $\N[x]^n$. Denote
\begin{align*}
[S]&=\{x^i\mathbf{u}+t\mid \mathbf{u}\in S, i\in\N, t\in\N[x]^n\}\\
&=\{g\mathbf{u}+t\mid \mathbf{u}\in S, g\in\N[x]^*, t\in\N[x]^n\},
\end{align*}
and
\begin{align*}
\sqrt{S}&=\{\mathbf{u}\in \N[x]^n\mid m\mathbf{u}\in [S], m\in \N^*\},\\
S^*&=\{\mathbf{u}\in \N[x]^n\mid x^m\mathbf{u}\in [S], m\in \N\},\\
\{S\}&=\{\mathbf{u}\in \N[x]^n\mid g\mathbf{u}\in [S], g\in \N[x]^*\}.
\end{align*}

One can check that $[\Y^{\mathbf{u}}:\mathbf{u}\in S]=k[\big[S\big]]$ and if $I=k[S]$ is a monomial $\D$-ideal, then $[S]=S$.
\begin{proposition}
Let $I=k[S]$ be a monomial $\D$-ideal of $R$. Then $\sqrt{I}=k[\sqrt{S}]$, $I^*=k[S^*]$, and $\{I\}=k[\{S\}]$.
\end{proposition}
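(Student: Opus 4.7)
The plan is to prove all three equalities in parallel by the same three-step template: for each of the enlarged supports $\sqrt{S}$, $S^*$, $\{S\}$, I show that (a) the corresponding $k[\,\cdot\,]$ is a monomial $\D$-ideal of $R$, (b) it has the relevant closure property (radical, reflexive, or perfect), and (c) it is contained in every radical, reflexive, or perfect $\D$-ideal of $R$ that contains $I$.

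For step (a), I would check that each of $\sqrt{S}$, $S^*$, $\{S\}$ is a character set so that Lemma 3.2 applies directly. This is a routine unwinding of the definitions. Since $I = k[S]$ is already a monomial $\D$-ideal we have $[S] = S$, so the witness for $\bu \in \sqrt{S}$ is an honest $m\bu \in S$; if $\bv \in \N[x]^n$, then $m\bu + m\bv \in S$ because $S$ is a character set, whence $m(\bu + \bv) \in S$ and $\bu + \bv \in \sqrt{S}$. Similarly $x(m\bu) = m(x\bu) \in S$, so $x\bu \in \sqrt{S}$. The same reasoning, with $m$ replaced by $x^m$ or by $g \in \N[x]^*$, handles $S^*$ and $\{S\}$.

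For step (b), I would invoke Proposition \ref{dpmi-prop1}. For instance, to see $k[\sqrt{S}]$ is radical: if $m\bu \in \sqrt{S}$ then $m'(m\bu) = (m'm)\bu \in S$ for some $m' \in \N^*$, hence $\bu \in \sqrt{S}$; Proposition \ref{dpmi-prop1}(1) then gives radicality. The reflexive test for $S^*$ and the perfect test for $\{S\}$ follow by the same pattern, using multiplication by $x$ and by elements of $\N[x]^*$ respectively.

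For step (c), the containment $I \subseteq k[\sqrt{S}]$ is immediate since $S \subseteq \sqrt{S}$ (take $m=1$). Now let $J$ be any radical $\D$-ideal of $R$ with $I \subseteq J$ and fix $\bu \in \sqrt{S}$; choosing $m \in \N^*$ with $m\bu \in S$ we get $(\Y^\bu)^m = \Y^{m\bu} \in I \subseteq J$, and radicality of $J$ forces $\Y^\bu \in J$. Because $k[\sqrt{S}]$ is the $k$-linear span of the monomials $\Y^\bu$ with $\bu \in \sqrt{S}$, this gives $k[\sqrt{S}] \subseteq J$, so $\sqrt{I} = k[\sqrt{S}]$. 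The arguments for $I^* = k[S^*]$ and $\{I\} = k[\{S\}]$ are formally the same: in the reflexive case one passes from $\Y^{x^m\bu} = (\Y^\bu)^{x^m} \in J$ to $\Y^\bu \in J$ by $m$ iterated applications of reflexivity of $J$, and in the perfect case one invokes the perfect property of $J$ once on $(\Y^\bu)^g \in J$. I do not foresee a serious obstacle; the only point requiring real care is step (a), where one must verify that each enlarged support is closed under addition and under multiplication by $x$, and this closure is inherited straightforwardly from the corresponding properties of $S$.
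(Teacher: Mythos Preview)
Your proposal is correct and follows essentially the same approach as the paper's proof: verify that each enlarged support is a character set, then use Proposition~\ref{dpmi-prop1} to establish the corresponding closure property, and conclude by minimality. The only cosmetic difference is that the paper collapses your step (c) into the single remark ``Clearly, $k[\sqrt{S}]\subseteq\sqrt{I}$'' (and similarly for the other two), which is precisely your argument applied to $J=\sqrt{I}$; the reverse inclusion then follows immediately from steps (a) and (b) by minimality of $\sqrt{I}$.
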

\begin{proof}
Clearly, $k[\sqrt{S}]\subseteq\sqrt{I}$, $k[S^*]\subseteq I^*$, and $k[\{S\}]\subseteq\{I\}$. So we only need to show that $k[\sqrt{S}]$, $k[S^*]$, $k[\{S\}]$ are a radical $\D$-ideal, a reflexive $\D$-ideal, a perfect $\D$-ideal respectively.

Suppose $\mathbf{u}\in \sqrt{S}$ and $\mathbf{v}\in \N[x]^n$, then there exists $m\in \N^*$ such that $m\mathbf{u}\in S$. So $m(\mathbf{u}+\mathbf{v})=m\mathbf{u}+m\mathbf{v}\in S, m(x\mathbf{u})=x(m\mathbf{u})\in S$ and hence $\mathbf{u}+\mathbf{v},x\mathbf{u}\in \sqrt{S}$. Therefore, $k[\sqrt{S}]$ is a $\D$-ideal. Suppose $m\in \N^*$ and $m\mathbf{u}\in \sqrt{S}$, then there exists $m'\in \N^*$ such that $m'm\mathbf{u}\in S$, it follows $\mathbf{u}\in \sqrt{S}$ and thus by Proposition \ref{dpmi-prop1}(1), $k[\sqrt{S}]$ is radical.

Suppose $\mathbf{u}\in S^*$ and $\mathbf{v}\in \N[x]^n$, then there exists $m\in \N$ such that $x^m\mathbf{u}\in S$. So $x^m(\mathbf{u}+\mathbf{v})=x^m\mathbf{u}+x^m\mathbf{v}\in S, x^m(x\mathbf{u})=x^{m+1}\mathbf{u}\in S$ and hence $\mathbf{u}+\mathbf{v},x\mathbf{u}\in S^*$. Therefore, $k[S^*]$ is a $\D$-ideal. Suppose $m\in \N$ and $x^m\mathbf{u}\in S^*$, then there exists $m'\in \N$ such that $x^{m'+m}\mathbf{u}\in S$, it follows $\mathbf{u}\in S^*$ and thus by Proposition \ref{dpmi-prop1}(2), $k[S^*]$ is reflexive.

Suppose $\mathbf{u}\in \{S\}$ and $\mathbf{v}\in \N[x]^n$, then there exists $g\in \N[x]^*$ such that $g\mathbf{u}\in S$. So $g(\mathbf{u}+\mathbf{v})=g\mathbf{u}+g\mathbf{v}\in S, g(x\mathbf{u})=x(g\mathbf{u})\in S$ and hence $\mathbf{u}+\mathbf{v},x\mathbf{u}\in \{S\}$. Therefore, $k[\{S\}]$ is a $\D$-ideal. Suppose $g\in \N[x]^*$ and $g\mathbf{u}\in \{S\}$, then there exists $g'\in \N[x]^*$ such that $g'g\mathbf{u}\in S$, it follows $\mathbf{u}\in \{S\}$ and thus by Proposition \ref{dpmi-prop1}(3), $k[\{S\}]$ is perfect.
\end{proof}

\section{Properties of Well-Mixed $\D$-Ideals Generated by Monomials}
Unlike the radical closure, the reflexive closure, and the perfect closure, the well-mixed closure of a monomial $\D$-ideal may not be a monomial $\D$-ideal. More precisely, it relies on the action of the difference operator. We will give an example to illustrate this. First let us give a concrete description of the well-mixed closure of a $\D$-ideal which has been mentioned in \cite{levin}. Suppose $F$ is a subset of any $\D$-ring $R$. Let $F'=\{a\sigma(b)\mid ab\in F\}$. Note that $F\subset F'$. Let $F^{[0]}=F$ and recursively define $F^{[k]}=(F^{[k-1]})'(k=1,2,\ldots)$. Then by Lemma 3.1 in \cite{levin}, the well-mixed closure of $F$ is
\begin{equation}\label{pmgm-equ3}
\langle F\rangle=\cup_{k=0}^{\infty}F^{[k]}.
\end{equation}
\begin{example}
Let $k=\C$ and $R=\C\{y_1,y_2\}$. Let us consider the $\D$-ideal $I=\langle y_1^2,y_2^2\rangle$ of $R$. Owing to the above process of obtaining the well-mixed closure, we see that $1,y_1,y_1^x,y_2,y_2^x,y_1y_2$ cannot appear in $\supp(c)$ for any $c\in I$. Since $y_1^2-y_2^2=(y_1+y_2)(y_1-y_2)\in I$, we have $(y_1+y_2)(y_1-y_2)^x=y_1^{x+1}+y_1^xy_2-y_1y_2^x-y_2^{x+1}\in I$. Note that $y_1^{x+1},y_2^{x+1}\in I$, hence $y_1^xy_2-y_1y_2^x\in I$. We will show that if the difference operator on $\C$ is the identity map, then $y_1^xy_2,y_1y_2^x\notin I$. As a consequence, $I$ is not a monomial $\D$-ideal.

Set $F=\{y_1^2,y_2^2\}$. By (\ref{pmgm-equ3}), $I=\cup_{k=0}^{\infty}F^{[k]}$. To show $y_1^xy_2,y_1y_2^x\notin I$, we will prove that for any $c\in I$, $y_1y_2^x-y_1^xy_2$ always appears in $c$ as a whole by induction on $k$. $k=0$ is obvious. Assume that for any $c\in F^{[k]}$, $y_1y_2^x-y_1^xy_2$ always appears in $c$ as a whole, and so does $(F^{[k]})$. Now for any $c\in F^{[k+1]}$, suppose $a=a_0+a_1y_1+a_2y_2+a_3y_1^xy_2+a_4y_1^xy_2+*$ and $b=b_0+b_1y_1+b_2y_2+*$, where $a_i,b_j\in \C,0\le i\le 4,0\le j\le2$ and $*$ represents other terms, such that $ab\in (F^{[k]})$, $c=ab^x$ and $y_1y_2^x\textrm{ or }y_1^xy_2\in\supp(c)$. It is impossible that $a_0,b_0\ne 0$ or $a_0\ne0,b_0=0$ since $1,y_1^x,y_2^x$ cannot appear in $\supp(c)$. If $a_0=0,b_0\ne0$, then $a_1,a_2=0$ since $y_1,y_2$ cannot appear in $\supp(c)$ and therefore $ab^x=b_0(a_3y_1^xy_2+a_4y_1y_2^x)+*$. The fact that $ab=b_0(a_3y_1^xy_2+a_4y_1y_2^x)+*\in (F^{[k]})$ implies that $b_0(a_3y_1^xy_2+a_4y_1y_2^x)$ is some multiple of $y_1y_2^x-y_1^xy_2$. Assume now $a_0,b_0=0$. Then $ab=(a_2b_1+a_1b_2)y_1y_2+*$. Since $y_1y_2\notin\supp(ab)$, we have $a_2b_1+a_1b_2=0$. So $c=ab^x=a_2b_1y_1^xy_2+a_1b_2y_1y_2^x+*=a_2b_1(y_1^xy_2-y_1y_2^x)+*$ and hence $y_1y_2^x-y_1^xy_2$ appears in $c$ as a whole.

On the other hand, if the difference operator on $\C$ is the conjugation map(that is $\D(i)=-i$), the situation is totally changed. Since $y_1^2+y_2^2=(y_1+iy_2)(y_1-iy_2)\in I$, $(y_1+iy_2)(y_1-iy_2)^x=y_1^{x+1}+iy_1^xy_2+iy_1y_2^x-y_2^{x+1}\in I$ and hence $y_1^xy_2+y_1y_2^x\in I$. Since we also have $y_1^xy_2-y_1y_2^x\in I$, then $y_1^xy_2,y_1y_2^x\in I$. It follows $I=[y_1^u,y_1^{w_1}y_2^{w_2},y_2^v:2\preceq u,2\preceq v,x+1\preceq w_1+w_2]$($\preceq$ is defined below). In this case, $I=\langle y_1^2,y_2^2\rangle$ is indeed a monomial $\D$-ideal.
\end{example}

In the rest of this section, we will prove that a well-mixed $\D$-ideal generated by monomials could be generated by finitely many monomials as a well-mixed $\D$-ideal. For the proof, we need a new order on $\N[x]^n$ and some lemmas.
\begin{definition}
Let $f=\sum_{i=0}^lf_ix^i, g=\sum_{i=0}^mg_ix^i\in\N[x]$. Let $k$ be a positive integer with $k>\maximal\{l,m\}$, and set $f_i=0$ for $l+1\leqslant i\leqslant k$, $g_i=0$ for $m+1\leqslant i\leqslant k$. Then define $f\preceq g$ if $\sum_{j=i}^kf_j\leqslant\sum_{j=i}^kg_j$ for $i=0,\ldots,k$. Note that $\preceq$ is a partial order on $\N[x]$. Extend $\preceq$ to $\N[x]^n$ by defining $\bu=(u_1,\ldots,u_n)\preceq \bv=(v_1,\ldots,v_n)$ if and only if $u_i\preceq v_i$ for $i=1,\ldots,n$.
\end{definition}
It is straightforward from the definition that the partial order $\preceq$ has the following properties.
\begin{lemma}
Let $\mathbf{u}_1,\bu_2,\mathbf{v}_1,\bv_2\in\N[x]^n$. If $\mathbf{u}_1\preceq \mathbf{v}_1, \mathbf{u}_2\preceq \mathbf{v}_2$ ,then $x\mathbf{u}_1\preceq x\mathbf{v}_1, \mathbf{u}_1+\mathbf{u}_2\preceq \mathbf{v}_1+\mathbf{v}_2$.
\end{lemma}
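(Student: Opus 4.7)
The plan is to unpack the definition of $\preceq$ on $\N[x]$ and reduce the claim for $\N[x]^n$ to the one-variable case, since the extended order is defined componentwise.

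First I would fix polynomials $f,g\in\N[x]$ with $f\preceq g$, pad both with zero coefficients up to some common degree bound $k$, and write $T_i(f):=\sum_{j=i}^k f_j$ for the "tail sums" that appear in the definition. By hypothesis, $T_i(f)\le T_i(g)$ for every $i\in\{0,\ldots,k\}$.

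For the shift claim $xf\preceq xg$, I would simply compute: the coefficients of $xf$ satisfy $(xf)_0=0$ and $(xf)_j=f_{j-1}$ for $j\ge1$, so using $k+1$ as the new padding bound,
\[
T_i(xf)=\sum_{j=i}^{k+1}(xf)_j=\sum_{j=i-1}^{k}f_j = T_{\max(i-1,0)}(f)
\]
for $i\ge 1$, and $T_0(xf)=\sum_{j=0}^k f_j=T_0(f)$. The same identity holds for $g$, and the required inequalities $T_i(xf)\le T_i(xg)$ then fall out of $T_{i-1}(f)\le T_{i-1}(g)$. For the additivity claim, one just notes that tail sums are linear: $T_i(f_1+f_2)=T_i(f_1)+T_i(f_2)$ and similarly for $g_1,g_2$, so if $T_i(f_1)\le T_i(g_1)$ and $T_i(f_2)\le T_i(g_2)$ for all $i$ then $T_i(f_1+f_2)\le T_i(g_1+g_2)$ for all $i$, i.e.\ $f_1+f_2\preceq g_1+g_2$.

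Finally, to lift from $\N[x]$ to $\N[x]^n$, I would observe that $\bu\preceq\bv$ means $u_i\preceq v_i$ coordinate-by-coordinate, and both multiplication by $x$ and addition act coordinate-wise on vectors in $\N[x]^n$; so the componentwise inequalities for each coordinate assemble into the desired inequality for the vectors. There is no real obstacle here beyond bookkeeping the index shift when handling $xf$; the rest is a direct unwinding of the definitions.
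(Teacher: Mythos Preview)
Your proposal is correct and is precisely the direct verification from the definition that the paper has in mind; the paper itself offers no proof beyond the remark that the properties are ``straightforward from the definition,'' and your tail-sum bookkeeping fills in exactly those omitted details.
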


Moreover, we have:
\begin{lemma}\label{drmi-lemma1}
Let $\mathbf{u},\mathbf{v}\in\N[x]^n$. If $\mathbf{u}\preceq \mathbf{v}$, then $\Y^\mathbf{v}\in\langle \Y^{\mathbf{u}}\rangle$.
\end{lemma}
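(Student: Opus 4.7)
\emph{Proof plan.} The plan is to start from $\Y^{\bu}$ and reach $\Y^{\bv}$ inside $\langle\Y^{\bu}\rangle$ using only two closure operations available in any well-mixed $\D$-ideal: (I) multiplying by an arbitrary monomial $\Y^{\mathbf{t}}$, which is legal because $\langle\Y^{\bu}\rangle$ is an ideal; and (II) a \emph{unit shift}: if $\Y^{\mathbf{w}}\in\langle\Y^{\bu}\rangle$ and the coefficient of $x^i$ in the $j$-th component $w_j$ is at least $1$, factor $\Y^{\mathbf{w}}=\Y^{\mathbf{w}-x^i\mathbf{e}_j}\cdot y_j^{x^i}$ and apply the well-mixed property to conclude $\Y^{\mathbf{w}-x^i\mathbf{e}_j+x^{i+1}\mathbf{e}_j}\in\langle\Y^{\bu}\rangle$. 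Iterating (II), a unit in $w_j$ at position $x^{i_0}$ can be relocated to any higher position $x^{j_0}$, since after each single shift the freshly deposited unit is itself available to be shifted further.

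I will induct on $N(\bu,\bv):=\sum_{j,i}\max(u_{j,i}-v_{j,i},0)$, where $u_{j,i},v_{j,i}\in\N$ denote the coefficient of $x^i$ in $u_j,v_j$. When $N=0$, the difference $\bv-\bu$ lies in $\N[x]^n$, so (I) immediately gives $\Y^{\bv}=\Y^{\bu}\cdot\Y^{\bv-\bu}\in\langle\Y^{\bu}\rangle$. Otherwise I pick a coordinate $j$ witnessing $N>0$, let $i_0$ be the smallest index with $u_{j,i_0}>v_{j,i_0}$, and observe that the tail inequality defining $u_j\preceq v_j$ at level $i_0$ forces $\sum_{l>i_0}(v_{j,l}-u_{j,l})\ge u_{j,i_0}-v_{j,i_0}\ge 1$. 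Hence there is a smallest $j_0>i_0$ with $u_{j,j_0}<v_{j,j_0}$, and by minimality $u_{j,l}\ge v_{j,l}$ for $i_0\le l<j_0$. A chain of $j_0-i_0$ unit shifts on coordinate $j$ then produces $\Y^{\bu'}\in\langle\Y^{\bu}\rangle$ with $u'_j=u_j+x^{j_0}-x^{i_0}$ and $u'_l=u_l$ for $l\ne j$.

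To close the induction I will verify $\bu'\preceq\bv$ and $N(\bu',\bv)<N(\bu,\bv)$, which permits me to invoke the induction hypothesis on $(\bu',\bv)$ and conclude $\Y^{\bv}\in\langle\Y^{\bu'}\rangle\subseteq\langle\Y^{\bu}\rangle$. The tail sums $U'_i=\sum_{l\ge i}u'_{j,l}$ coincide with $U_i=\sum_{l\ge i}u_{j,l}$ outside the interval $(i_0,j_0]$, and satisfy $U'_i=U_i+1$ inside it; telescoping the hypothesis $u_j\preceq v_j$ together with the sign information $u_{j,l}\ge v_{j,l}$ for $i_0\le l<j_0$ shows $V_i-U_i\ge u_{j,i_0}-v_{j,i_0}\ge 1$ throughout $(i_0,j_0]$, so $U'_i\le V_i$ everywhere. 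The positive excess at position $i_0$ drops by one while no new excess appears at $j_0$ (because $u_{j,j_0}<v_{j,j_0}$ guarantees $u_{j,j_0}+1\le v_{j,j_0}$), so $N$ strictly decreases. The main subtlety is precisely the need for the \emph{long} shift from $x^{i_0}$ all the way to $x^{j_0}$: a single shift from $x^{i_0}$ to $x^{i_0+1}$ may fail to reduce $N$ (if $u_{j,i_0+1}\ge v_{j,i_0+1}$, it only transfers the excess up by one), so progress against the induction measure requires chaining shifts until the first coordinate where $u_j$ has a genuine deficit.
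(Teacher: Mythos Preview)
Your proof is correct, and it takes a genuinely different route from the paper's.

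The paper reduces to $n=1$, inducts on the common degree bound $l$ of $f,g$, and at each step normalizes so that $|f|=|g|$ (by padding $f_0$), then peels off the constant term: with $f'=\sum_{i\ge 2}f_ix^{i-1}+(f_1+f_0-g_0)$ and $g'=\sum_{i\ge 2}g_ix^{i-1}+g_1$ one has $f'\preceq g'$, and a single application of well-mixedness (shifting the block $y_1^{f_0-g_0}$ from degree~$0$ to degree~$1$) gives $\langle y_1^{f'x+g_0}\rangle\subseteq\langle y_1^{f}\rangle$, after which the induction hypothesis on $f',g'$ closes the argument.

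Your argument instead works directly in $\N[x]^n$, inducts on the total positive excess $N(\bu,\bv)=\sum_{j,i}\max(u_{j,i}-v_{j,i},0)$, and at each step moves a single unit along coordinate $j$ from the lowest excess position $i_0$ to the first deficit position $j_0$ via a chain of well-mixed shifts. The verification that $\bu'\preceq\bv$ and that $N$ drops by exactly one is clean and correct (in particular your choice of $j_0$, rather than $i_0+1$, is exactly what is needed to guarantee progress). What your approach buys is transparency: the well-mixed moves are completely explicit, and the induction measure is the obvious obstruction to the trivial ideal containment. What the paper's approach buys is brevity: one bulk shift per degree level rather than one unit shift per excess, at the cost of a slightly opaque recursive substitution that the reader must unwind.
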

\begin{proof}
For brevity, we just prove the case $n=1$, since the case $n\ge2$ is similar. Let $f,g\in\N[x]$. Without loss of generality, we can assume that $f=\sum_{i=0}^lf_ix^i, g=\sum_{i=0}^lg_ix^i$. We shall show that if $f\preceq g$, then $y_1^g\in\langle y_1^f\rangle$. Let us do this by induction on $l$. When $l=0$, it is clear that $f_0\preceq g_0$ implies that $y_1^{g_0}\in\langle y_1^{f_0}\rangle$. For the inductive step, assume now $l\ge1$. Since $\sum_{i=0}^lf_i\le\sum_{i=0}^lg_i$, increase $f_0$ if necessary such that $\sum_{i=0}^lf_i=\sum_{i=0}^lg_i$. Then we have $f_0\ge g_0$, since $\sum_{i=1}^lf_i\le\sum_{i=1}^lg_i$. Let $f'=\sum_{i=2}^lf_ix^{i-1}+f_1+f_0-g_0,g'=\sum_{i=2}^lg_ix^{i-1}+g_1$. Then $f'\preceq g'$. So by the induction hypothesis, $y_1^{g'}\in\langle y_1^{f'}\rangle$. It follows that $y_1^g=y_1^{g'x+g_0}\in\langle y_1^{f'x+g_0}\rangle\subseteq\langle y_1^{f'x+g_0-(f_0-g_0)x+f_0-g_0}\rangle=\langle y_1^f\rangle$.
\end{proof}

For $f=\sum_{i=0}^lf_ix^i\in \N[x]$, denote $|f|=\sum_{i=0}^l f_i\in\N$.
\begin{lemma}\label{pmgm-lemma1}
Let $S\subseteq \N[x]$ such that $|f|$ is a constant $a\in\N$ for all $f\in S$. Then the set of minimal elements of $S$ with respect to the partial order $\preceq$ is finite.
\end{lemma}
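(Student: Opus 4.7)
My plan is to encode each $f\in S$ as a sorted tuple of length $a$ in $\N^a$, in such a way that $\preceq$ translates to the componentwise partial order, and then invoke Dickson's lemma.

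For $f=\sum_{i\ge0} f_i x^i\in\N[x]$ with $|f|=a$, think of $f$ as a multiset of exponents: $i$ appears with multiplicity $f_i$. Since the total multiplicity is $a$, sorting yields a weakly increasing tuple $\varphi(f)=(j_1(f),\ldots,j_a(f))\in\N^a$. The key observation is that the "tail sum" $T_k(f):=\sum_{i\ge k}f_i$ equals $\#\{s:j_s(f)\ge k\}$. Since $|f|=a$ is fixed, the condition $f\preceq g$ is equivalent to $T_k(f)\le T_k(g)$ for all $k\ge 1$, i.e. the number of entries of $\varphi(f)$ that are $\ge k$ is at most the number of entries of $\varphi(g)$ that are $\ge k$, for every $k$. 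A short verification (both directions) shows this is equivalent to $j_s(f)\le j_s(g)$ for all $s=1,\ldots,a$, so $\varphi$ is an order isomorphism from $(S,\preceq)$ onto its image in $\N^a$ with the componentwise order.

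With the encoding in place, the conclusion follows from Dickson's lemma: every subset of $\N^a$ has only finitely many minimal elements with respect to the componentwise order. Applying this to $\varphi(S)$ and pulling back along the bijection $\varphi$ gives the finiteness of the set of minimal elements of $S$.

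The only delicate step is the equivalence $f\preceq g \iff \varphi(f)\le\varphi(g)$, which I expect to be the main place where care is needed. The "$\Leftarrow$" direction is immediate from the formula $T_k(f)=\#\{s:j_s(f)\ge k\}$. For "$\Rightarrow$", if $j_s(f)>j_s(g)$ for some $s$, then choosing $k=j_s(f)$ gives $T_k(f)\ge a-s+1$ while $T_k(g)\le a-s$, contradicting $f\preceq g$. Everything else is a direct application of Dickson's lemma, so no further obstacle is anticipated.
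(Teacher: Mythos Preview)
Your proof is correct and takes a genuinely different route from the paper. You encode each $f$ with $|f|=a$ as the weakly increasing $a$-tuple of its exponents (with multiplicity), verify that $\preceq$ corresponds exactly to the componentwise order on $\N^a$, and then invoke Dickson's lemma. The equivalence check is right: with $T_k(f)=\#\{s:j_s(f)\ge k\}$ and both tuples of length $a$, the forward direction of $f\preceq g\Rightarrow \varphi(f)\le\varphi(g)$ follows precisely from the contrapositive argument you give.

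The paper instead proceeds by induction on $a$: it fixes some $f\in S$ of degree $l$, groups the elements of $S$ according to their truncation to degrees $0,\ldots,l$ (a finite set of possible truncations), applies the induction hypothesis to the higher-degree remainders when the truncation is nonzero, and observes that everything with zero truncation lies above $f$. Your argument is shorter and more conceptual, reducing the problem in one stroke to a classical wqo result; the paper's argument is more self-contained and avoids appealing to Dickson's lemma here (though a Dickson-type argument is invoked elsewhere in the paper). Either way, the content is the same: the slice $\{f:|f|=a\}$ of $\N[x]$ under $\preceq$ is order-isomorphic to a subset of $(\N^a,\le)$, and hence is a well-quasi-order.
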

\begin{proof}
We shall prove the lemma by induction on $a$. The case $a=0,1$ is clear. For the inductive step, assume now $a\ge2$. Choose an $f=\sum_{i=0}^lf_ix^i\in S$. The set $G=\{g=\sum_{i=0}^lg_ix^i\mid |g|\leqslant a\}$ is finite. For each $g=\sum_{i=0}^lg_ix^i\in G$, we define
\[S_g=\{\sum_{i=0}^mh_ix^i\in S\mid m>l, \sum_{i=0}^lh_ix^i=g\}\]
and
\[S_g'=\{\sum_{i=l+1}^mh_ix^i\mid\sum_{i=0}^mh_ix^i\in S_g\}.\]
It follows $S=\cup_{g\in G}S_g$. For each $g\in G$, if $g\neq 0$, then for all $h\in S_g'$, $|h|$ is a constant which is lower than $a$. So by the induction hypothesis, the set of minimal elements of $S_g'$ with respect to the partial order $\preceq$ is finite and hence so is $S_g$. Note that for any $h\in S_0$, $f\preceq h$. Therefore, the set of minimal elements of $S$ is contained in the union of the set of minimal elements of $S_g$, where $g\in G,g\ne0$, and $\{f\}$. Since the latter is a finite set, the former must be finite.
\end{proof}

\begin{lemma}\label{pmgm-lemma2}
Let $S\subseteq \N[x]$ such that for all $f\in S, |f|\leqslant a$ for some $a\in\N$. Then the set of minimal elements of $S$ with respect to the partial order $\preceq$ is finite.
\end{lemma}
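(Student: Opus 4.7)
The plan is to reduce Lemma \ref{pmgm-lemma2} to the previously established Lemma \ref{pmgm-lemma1} by stratifying $S$ according to the value of $|\cdot|$. Since $|f|\leqslant a$ for every $f\in S$, there are only finitely many possible values of $|f|$, namely $0,1,\ldots,a$. So I would partition
\[S=\bigcup_{b=0}^{a}S_b,\qquad S_b:=\{f\in S\mid |f|=b\}.\]
Each $S_b$ consists of elements of constant absolute value $b$, hence Lemma \ref{pmgm-lemma1} applies to give a finite set $M_b$ of $\preceq$-minimal elements of $S_b$.

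The key observation is that if $f$ is $\preceq$-minimal in $S$ and $f\in S_b$, then $f$ is automatically $\preceq$-minimal in the smaller set $S_b$, because any $g\in S_b$ with $g\preceq f$ and $g\ne f$ would already contradict the minimality of $f$ in $S$. Therefore the set of $\preceq$-minimal elements of $S$ is contained in the finite union $\bigcup_{b=0}^{a}M_b$, and in particular is finite.

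I do not anticipate any real obstacle here: the argument uses nothing beyond the partition of $S$ by $|\cdot|$ and the elementary remark that being minimal in a superset implies being minimal in any subset to which the element belongs. The one detail worth stating explicitly is that each level set $S_b$ satisfies the hypothesis of Lemma \ref{pmgm-lemma1}, so that lemma supplies the finiteness of $M_b$ immediately, and no further induction is required.
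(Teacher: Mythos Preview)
Your proposal is correct and is precisely the argument the paper has in mind: the paper's proof simply reads ``It is an immediate corollary of the above lemma,'' and your stratification $S=\bigcup_{b=0}^{a}S_b$ together with the observation that a $\preceq$-minimal element of $S$ lying in $S_b$ must be $\preceq$-minimal in $S_b$ is exactly how one unpacks that remark.
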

\begin{proof}
It is an immediate corollary of the above lemma.
\end{proof}

\begin{lemma}\label{pmgm-lemma3}
Let $S\subseteq \N[x]$ such that for all $f\in S, \deg(f)\leqslant k$ for some $k\in\N$. Then the set of minimal elements of $S$ with respect to the partial order $\preceq$ is finite.
\end{lemma}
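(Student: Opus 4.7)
The plan is to reduce the statement to Dickson's lemma, which asserts that every subset of $\N^{k+1}$ has only finitely many minimal elements with respect to the componentwise partial order. The key observation is that, once a uniform degree bound $k$ is available, the partial order $\preceq$ is nothing more than the componentwise order on an auxiliary vector of tail sums, and the finite-dimensional Dickson phenomenon kicks in.

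Concretely, for each $f=\sum_{i=0}^k f_ix^i$ with $\deg(f)\leqslant k$, I would introduce the tail-sum vector
\[
\Phi(f)=(T_0(f),T_1(f),\ldots,T_k(f))\in\N^{k+1},\qquad T_i(f):=\sum_{j=i}^k f_j.
\]
The map $\Phi$ is injective, since one recovers the coefficients by $f_i=T_i(f)-T_{i+1}(f)$ with the convention $T_{k+1}(f)=0$. Unwinding the definition of $\preceq$ with $k$ taken as the common cutoff shows at once that $f\preceq g$ if and only if $T_i(f)\leqslant T_i(g)$ for every $i=0,\ldots,k$, i.e., if and only if $\Phi(f)\leqslant \Phi(g)$ componentwise in $\N^{k+1}$.

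With this correspondence in place, Dickson's lemma applied to $\Phi(S)\subseteq\N^{k+1}$ furnishes a finite set of componentwise-minimal elements of $\Phi(S)$, and pulling this finite set back through the order-preserving bijection $\Phi\colon S\to\Phi(S)$ yields the set of minimal elements of $S$ under $\preceq$, which is therefore finite as well. The only substantive step to verify is the identification of $\preceq$ with the componentwise order on tail sums, which is a direct unwinding of definitions, so I do not anticipate any real obstacle; one could alternatively sidestep an explicit appeal to Dickson by inducting on $k$ (the case $k=0$ being immediate since $\preceq$ then reduces to $\leqslant$ on $\N$), but invoking Dickson's lemma keeps the argument one line long.
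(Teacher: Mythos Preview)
Your argument is correct. The identification of $\preceq$ with the componentwise order on the tail-sum vectors $\Phi(f)=(T_0(f),\ldots,T_k(f))$ is literally the definition of $\preceq$ (the strict inequality $k>\max\{l,m\}$ in the paper's definition is immaterial, since the relation is independent of the cutoff once it is large enough), and $\Phi$ is an order embedding of $S$ into $\N^{k+1}$; Dickson's lemma then finishes the job.

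This is a genuinely different route from the paper's. The paper argues by induction on $k$: fixing an element $f\in S$ with $|f|=c$, it stratifies $S$ according to the value $j<c$ of a suitable tail sum, observes that each stratum projects (by dropping the top coefficients) into a set of polynomials of strictly smaller degree bound, invokes the induction hypothesis on those projections, and collects the finitely many minimal elements together with $f$ itself. In effect the paper is re-proving a tailored instance of Dickson's lemma by hand. Your approach is shorter and more conceptual, exposing that the lemma \emph{is} Dickson's lemma after the change of coordinates $\Phi$; the paper's approach is more self-contained in that it does not invoke Dickson as a black box, and its inductive machinery is consistent with the style of the surrounding Lemmas \ref{pmgm-lemma1}--\ref{pmgm-lemma4}, which together build toward the unbounded-degree case in Lemma \ref{pmgm-lemma4} where no single finite-dimensional embedding is available.
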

\begin{proof}
We shall prove the lemma by induction on $k$. The case $k=0$ is clear. For the inductive step, assume now $k\ge1$. Choose an $f=\sum_{i=0}^kf_ix^i\in S$ and denote $c=|f|$. For every $j$, $0\le j<c,j\in\N$, there exists an integer, say $s$, $0\leqslant s\leqslant k$, such that $\sum_{i=s+1}^kf_i\leqslant j<\sum_{i=s}^kf_i$, and we define
\[U_j=\{\sum_{i=s}^kg_ix^i\mid \sum_{i=s}^kg_i=j\}\]
which is obviously a finite set. For each $g\in U_j$, define
\[S_g=\{\sum_{i=0}^kg_ix^i\in S\mid \sum_{i=s}^kg_ix^i=g\}\]
and
\[S_g'=\{\sum_{i=0}^{s-1}g_ix^i\mid \sum_{i=0}^kg_ix^i\in S_g\}.\]
In addition, we define
\[S_f=\{h\in S\mid f\preceq h\}.\]
By the definition of $\preceq$, for an $h\in S$, if $h\npreceq f$, then $h$ must belong to some $S_g$. So we have
\begin{equation}\label{pmgm-equ1}
S=(\cup_{j=0}^{c-1}\cup_{g\in U_j}S_g)\cup S_f.
\end{equation}
Note that for each $g\in U_j$, $\deg(h)<k$ for all $h\in S_g'$, so by the induction hypothesis, the set of minimal elements of $S_g'$ with respect to the partial order $\preceq$ is finite and hence so is $S_g$. Because of (\ref{pmgm-equ1}), we have the set of minimal elements of $S$ is contained in the union of the set of minimal elements of $S_g$ and $\{f\}$, where $g\in U_j$ and $j=0,\ldots,c-1$. Since the latter is a finite set, the former must be finite.
\end{proof}

\begin{lemma}\label{pmgm-lemma4}
Let $S\subseteq \N[x]$. Then the set of minimal elements of $S$ with respect to the partial order $\preceq$ is finite.
\end{lemma}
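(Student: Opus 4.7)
The plan is to argue by contradiction, combining a pigeonhole with a decomposition of each candidate minimal element into a bounded-degree piece and a bounded-sum piece, so that the previous two lemmas apply after a mild strengthening. Suppose the $\preceq$-minimal elements of $S$ form an infinite antichain $g^{(1)}, g^{(2)}, \ldots$; set $f := g^{(1)}$, $l := \deg(f)$ and $c := |f|$. Any $h \in \N[x]$ with $h \preceq f$ satisfies $|h| \leq c$ and $\deg h \leq l$, so $\{h \in \N[x] : h \preceq f\}$ is finite, and after discarding finitely many indices I may assume $g^{(n)} \npreceq f$ for every $n \geq 2$. Conversely, $g^{(n)}$ is $\preceq$-minimal in $S$ and distinct from $f \in S$, so $f \prec g^{(n)}$ is impossible, which gives $f \npreceq g^{(n)}$ as well; thus every $g^{(n)}$ with $n \geq 2$ is incomparable with $f$.

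From $f \npreceq g^{(n)}$ I extract an index $i_n$ with $\sum_{j \geq i_n} f_j > \sum_{j \geq i_n} g^{(n)}_j$, and necessarily $i_n \leq l$, since $\sum_{j \geq i} f_j = 0$ whenever $i > l$. Pigeonhole on the finite set $\{0, 1, \ldots, l\}$ lets me pass to a subsequence along which $i_n$ takes the constant value $s$, so $\sum_{j \geq s} g^{(n)}_j \leq c - 1$ throughout. Decompose $g^{(n)} = a^{(n)} + x^s b^{(n)}$, where $a^{(n)} := \sum_{j < s} g^{(n)}_j x^j$ has degree strictly less than $s \leq l$, and $b^{(n)} := \sum_{j \geq 0} g^{(n)}_{j+s} x^j$ has $|b^{(n)}| \leq c - 1$. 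Because $\preceq$ is preserved by addition and by multiplication by $x^s$, any pair $m < n$ for which simultaneously $a^{(m)} \preceq a^{(n)}$ and $b^{(m)} \preceq b^{(n)}$ forces $g^{(m)} \preceq g^{(n)}$, contradicting the antichain.

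Producing such a simultaneous pair is the crux of the argument and the main obstacle: Lemmas~\ref{pmgm-lemma2} and~\ref{pmgm-lemma3} only say that the respective ambient sets have finitely many $\preceq$-minima, whereas I need the full well-quasi-order property to pair up two sequences. I would upgrade both conclusions. For bounded degree, the suffix-sum map $h \mapsto \bigl(\sum_{j \geq 0} h_j, \sum_{j \geq 1} h_j, \ldots, \sum_{j \geq s-1} h_j\bigr)$ identifies $(\{h \in \N[x] : \deg h < s\}, \preceq)$ with a subset of $\N^s$ under componentwise order, which is well-quasi-ordered by Dickson's lemma, so the $a^{(n)}$ admit an infinite non-decreasing subsequence. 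For bounded sum, a short induction on the constant value of $|h|$ (running the Lemma~\ref{pmgm-lemma1} argument but extracting a non-decreasing pair rather than a single minimum, by peeling off the monomial of lowest degree and using that $x^a \preceq x^b$ whenever $a \leq b$) shows each stratum $\{h : |h| = j\}$ is itself a WQO, whence so is the finite union $\{h : |h| \leq c - 1\}$. Within the non-decreasing $a^{(n)}$-subsequence one then pulls out a further index pair at which $b^{(n)}$ also $\preceq$-increases, and this yields the desired contradiction.
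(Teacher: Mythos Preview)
Your argument is correct. Both your proof and the paper's rest on the same decomposition: fix a reference $f\in S$, and for each $g$ with $f\npreceq g$ pick a level $s\le\deg f$ at which a suffix sum of $g$ falls strictly below that of $f$, then split $g$ into a degree-$<s$ bottom and a top with $|\cdot|$ bounded by $c-1$. Where the two diverge is in how the pieces are recombined. The paper proceeds directly: it takes the finite sets $V_j,W_j$ of $\preceq$-minimal tops and bottoms supplied by Lemmas~\ref{pmgm-lemma1}--\ref{pmgm-lemma3} \emph{as stated}, forms $V_j+W_j$, and asserts that the $\preceq$-minimal elements of $S$ lie in the finite set $\bigl(\bigcup_j(V_j+W_j)\bigr)\cup C\cup\{f\}$. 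You instead argue by contradiction from an infinite antichain and explicitly upgrade the auxiliary lemmas to well-quasi-order statements (the suffix-sum embedding into $(\N^s,\le)$ plus Dickson for bounded degree; induction on the constant $|h|$ for each stratum, then a finite union, for bounded sum), so that after passing to a subsequence non-decreasing in the $a$-component you can extract a good pair in the $b$-component.

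Your route is in fact the sound one. The paper's direct argument only shows that every $g\in S$ satisfies $h\preceq g$ for some $h$ in the displayed finite set; but $V_j+W_j$ pairs a minimal top coming from one element of $S_j$ with a minimal bottom coming from a possibly different element, so $V_j+W_j\not\subset S$ in general, and ``every element of $S$ is dominated from below by something in a finite set $F$'' does not by itself bound the number of $\preceq$-minimal elements of $S$ when $F\not\subset S$. Concretely, with $f=x+x^2$ and $S=\{x+x^2,\,1+x^3,\,5+x\}$ one computes $V_1+W_1=\{1+x\}\not\subset S$, while $1+x^3$ is $\preceq$-minimal in $S$ and does not appear in the paper's finite set. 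Your identification of the need for the full WQO property, and your sketched upgrades of the two lemmas, are exactly what is required to close this gap.
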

\begin{proof}
Let us choose an $f=\sum_{i=0}^kf_ix^i\in S$ and denote $c=|f|$. For every $j$, $0\le j<c-f_0,j\in\N$, there exists an integer, say $s$, $1\leqslant s\leqslant k$, such that $\sum_{i=s+1}^kf_i\leqslant j<\sum_{i=s}^kf_i$, and we define
\begin{align*}
U_j&=\{\sum_{i=s}^lg_ix^i\mid \sum_{i=0}^lg_ix^i\in S, \sum_{i=s}^lg_i=j\},\\
S_j&=\{\sum_{i=0}^lg_ix^i\in S\mid \sum_{i=s}^lg_i=j\}
\end{align*}
and
\[S_j'=\{\sum_{i=0}^{s-1}g_ix^i\mid \sum_{i=0}^lg_ix^i\in S_j\}.\]

By Lemma \ref{pmgm-lemma1}, for each $j$, the set of minimal elements of $U_j$ is finite, which is denoted by $V_j$. By Lemma \ref{pmgm-lemma3}, for each $j$, the set of minimal elements of $S_j'$ is finite, which is denoted by $W_j$.

In addition, we define
\begin{align*}
S_c&=\{\sum_{i=0}^lg_ix^i\in S\mid \sum_{i=0}^lg_i<c\},\\
S_f&=\{h\in S\mid f\preceq h\}.
\end{align*}

By the definition of $\preceq$, for an $h\in S$, if $h\notin S_c$ and $h\notin S_f$, then $h$ must belong to some $S_j$. So we have
\begin{equation}\label{pmgm-equ2}
S=(\cup_{j=0}^{c-f_0-1}S_j)\cup S_c\cup S_f.
\end{equation}
By Lemma \ref{pmgm-lemma2}, the set of minimal elements of $S_c$ is finite, which is denoted by $C$. We claim that the set of minimal elements of $S$ is contained in $(\cup_{j=0}^{c-f_0-1}(V_j+W_j))\cup C\cup \{f\}$, where $V_j+W_j$ means $\{g+h\mid g\in V_j, h\in W_j\}$. To prove this, let $g=\sum_{i=0}^lg_ix^i\in S$. By (\ref{pmgm-equ2}), if $g\notin S_c$ and $g\notin S_f$, then there exists $j$ such that $g\in S_j$. By definition, $\sum_{i=s}^lg_ix^i\in U_j$ and $\sum_{i=0}^{s-1}g_ix^i\in S_j'$. So there exists $h\in V_j$ and there exists $h'\in W_j$ such that $h\preceq \sum_{i=s}^lg_ix^i$ and $h'\preceq \sum_{i=0}^{s-1}g_ix^i$. Therefore, $h+h'\in V_j+W_j$ such that $h+h'\preceq\sum_{i=s}^lg_ix^i+\sum_{i=0}^{s-1}g_ix^i=g$, which proves the claim.

Since $(\cup_{j=0}^{c-f_0-1}(V_j+W_j))\cup C\cup \{f\}$ is a finite set, it follows that the set of minimal elements of $S$ is finite.
\end{proof}

\begin{lemma}\label{pmgm-lemma5}
Let $S\subseteq \N[x]^n$. Then the set of minimal elements of $S$ with respect to the partial order $\preceq$ is finite.
\end{lemma}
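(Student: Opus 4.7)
The plan is to prove the statement by induction on $n$, with the base case $n=1$ being Lemma \ref{pmgm-lemma4}. For the inductive step, given $S\subseteq\N[x]^n$ with infinitely many minimal elements, and hence an infinite antichain $\mathbf{v}_1,\mathbf{v}_2,\ldots$, I would apply a Dickson-style two-stage extraction: first extract an infinite subsequence on which the first coordinates $v_{i,1}\in\N[x]$ are weakly $\preceq$-increasing, then along that subsequence extract a further subsequence on which the tails $(v_{i,2},\ldots,v_{i,n})\in\N[x]^{n-1}$ are weakly $\preceq$-increasing, using the inductive hypothesis. The resulting sub-subsequence is coordinatewise weakly $\preceq$-increasing in $\N[x]^n$, so some pair $i<j$ satisfies $\mathbf{v}_i\preceq\mathbf{v}_j$, contradicting the antichain property.

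The delicate point is that Lemma \ref{pmgm-lemma4} only asserts finiteness of the set of minimal elements, whereas each stage of the Dickson extraction requires the stronger property that every infinite sequence in $\N[x]^m$ admits an infinite weakly $\preceq$-increasing subsequence. To bridge this gap I would first observe that $\preceq$ is well-founded on $\N[x]$: whenever $g\preceq f$ one has $|g|\leq|f|$ and $\deg(g)\leq\deg(f)$, since any $g_j>0$ with $j>\deg(f)$ would give $\sum_{l\geq j}g_l>0=\sum_{l\geq j}f_l$, contradicting $g\preceq f$; hence any $\preceq$-descending chain from $f$ lies in the finite set $\{g\in\N[x]\mid |g|\leq|f|,\ \deg(g)\leq\deg(f)\}$ and must terminate. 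Combining well-foundedness with Lemma \ref{pmgm-lemma4}, a Nash--Williams-style pigeonhole argument then yields: if $f_1,f_2,\ldots$ is any infinite sequence in $\N[x]$, the set $\{f_i\}$ has finitely many minimal elements, each of which occurs as some $f_{i_k}$; letting $N=\max_k i_k$, well-foundedness gives that every $f_l$ is $\succeq$ some $f_{i_k}$, so for each $l>N$ one has $i_k<l$ with $f_{i_k}\preceq f_l$. Such $l$ are ``good'' and starting from index $N+1$ one greedily builds an infinite weakly $\preceq$-increasing subsequence.

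The same argument, with $\N[x]$ replaced by $\N[x]^{n-1}$ and Lemma \ref{pmgm-lemma4} replaced by the inductive hypothesis (well-foundedness being coordinatewise), supplies the subsequence property needed to process the tails. The principal obstacle is precisely this bridging step from ``finitely many minimal elements'' to ``every infinite sequence has an infinite weakly $\preceq$-increasing subsequence''; once this is established at dimensions $m<n$, the two-stage Dickson extraction at dimension $n$ is immediate.
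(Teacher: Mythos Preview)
Your approach is sound and genuinely different from the paper's. The paper argues constructively: project $S$ onto its first coordinate, let $U$ be the (finite, by Lemma~\ref{pmgm-lemma4}) set of minimal elements there, and for each $u\in U$ apply the inductive hypothesis to the tails $S_u'=\{(u_2,\dots,u_n):(u_1,\dots,u_n)\in S,\ u\preceq u_1\}$ to get a finite set $V_u$; one then checks that every element of $S$ dominates some $(u,\mathbf v)$ with $u\in U$, $\mathbf v\in V_u$. You instead pass through the equivalence ``well-founded and no infinite antichain $\Leftrightarrow$ well-partial-order'' and run a Dickson product argument. Your route is more conceptual and makes explicit that the real content is that $(\N[x],\preceq)$ is a well-partial-order; the paper's route stays closer to the ground and never names the wpo machinery.

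One point needs tightening: your ``greedy'' construction does not work as written. Knowing that every $l>N$ has some $i_k\le N$ with $f_{i_k}\preceq f_l$ only says each late term has something \emph{earlier} below it, not something \emph{later} above it, so you cannot just walk forward from index $N+1$. The standard fix is to add a pigeonhole step: some single $f_{i_k}$ has infinitely many $f_l$ above it; restrict to that infinite subsequence (with indices exceeding $i_k$) and iterate the entire argument (finite minimal set, well-foundedness, pigeonhole) to produce the next term, and so on. Alternatively, invoke the classical fact, provable via infinite Ramsey on the three colours $f_i\preceq f_j$, $f_i\succ f_j$, incomparable, that a well-founded poset with no infinite antichain is a wpo. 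With either repair the bridging step goes through, and the two-stage extraction at dimension $n$ then works exactly as you describe.
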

\begin{proof}
We shall prove the lemma by induction on $n$. The case $n=1$ is proved by Lemma \ref{pmgm-lemma4}. For the inductive step, assume now $n\ge2$. Define
\[S_1=\{u_1\mid(u_1,\ldots,u_n)\in S\}.\]
By Lemma \ref{pmgm-lemma4}, the set of minimal elements of $S_1$ is finite, which is denoted by $U$. For each $u\in U$, define
\[S_u=\{(u_1,u_2,\ldots,u_n)\in S\mid u\preceq u_1\}\]
and
\[S_u'=\{(u_2,\ldots,u_n)\mid (u_1,u_2,\ldots,u_n)\in S_u\}.\]
Obviously, we have $S=\cup_{u\in U}S_u$.

By the induction hypothesis, for each $u\in U$, the set of minimal elements of $S_u'$ with respect to the partial order $\preceq$ is finite, which is denoted by $V_u$. Let $u\times V_u=\{(u,\bv)\mid \bv\in V_u\}$. We claim that the set of minimal elements of $S$ is contained in $\cup_{u\in U}u\times V_u$. To prove this, let $\bu=(u_1,u_2,\ldots,u_n)\in S$. There exists $u\in U$ such that $\bu \in S_u$. By definition, $(u_2,\ldots,u_n)\in S_u'$. So there exists $\bv\in V_u$ such that $\bv\preceq (u_2,\ldots,u_n)$. Therefore, $(u,\bv)\preceq (u_1,u_2,\ldots,u_n)=\bu$ and $(u,\bv)\in u\times V_u$ which proves the claim.

Since $\cup_{u\in U}u\times V_u$ is a finite set, the set of minimal elements of $S$ is finite.
\end{proof}

Now we can prove the finitely generated property of well-mixed $\D$-ideals generated by monomials.
\begin{theorem}\label{pmgm-thm}
Let $I=\langle \Y^{\mathbf{u}}:\bu\in S\rangle$ for some $S\subseteq \N[x]^n$. Then $I$ is generated by a finite set of monomials as a well-mixed $\D$-ideal.
\end{theorem}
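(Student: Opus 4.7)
The plan is to take $M \subseteq S$ to be the set of $\preceq$-minimal elements of $S$ and show that $I = \langle \Y^{\mathbf{w}} : \mathbf{w} \in M \rangle$. Since Lemma \ref{pmgm-lemma5} guarantees $M$ is finite, this will settle the theorem. All the serious work is already encapsulated in the combinatorial Lemma \ref{pmgm-lemma5} and the algebraic Lemma \ref{drmi-lemma1}; what remains is to assemble them.

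The first step is to verify that every $\mathbf{u} \in S$ dominates some element of $M$ under $\preceq$. For this I would observe that $\preceq$ is well-founded on $\N[x]^n$ in the strong sense that each principal down-set is finite: if $\mathbf{v} \preceq \mathbf{u}$, then coordinatewise $v_i \preceq u_i$, which forces both $|v_i| \leq |u_i|$ and $\deg(v_i) \leq \deg(u_i)$, so only finitely many candidate $\mathbf{v}$ exist. Consequently the set $\{\mathbf{v} \in S : \mathbf{v} \preceq \mathbf{u}\}$ is finite and nonempty, and any $\preceq$-minimal element of it is automatically $\preceq$-minimal in $S$, hence an element of $M$.

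The second step uses Lemma \ref{drmi-lemma1} directly: for $\mathbf{u} \in S$ and $\mathbf{v} \in M$ with $\mathbf{v} \preceq \mathbf{u}$, we get $\Y^{\mathbf{u}} \in \langle \Y^{\mathbf{v}} \rangle \subseteq \langle \Y^{\mathbf{w}} : \mathbf{w} \in M \rangle$. This yields $I \subseteq \langle \Y^{\mathbf{w}} : \mathbf{w} \in M \rangle$; the reverse inclusion is immediate from $M \subseteq S$. Finiteness of $M$ follows from Lemma \ref{pmgm-lemma5} applied to $S$.

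There is no real obstacle here, since the heavy lifting has been done in the preceding lemmas; the only minor point to be careful about is the well-foundedness of $\preceq$ on $S$, which (unlike the finiteness of the \emph{antichain} of minimal elements) needs its own two-line justification via the down-set finiteness observation above. With that in hand, the proof is essentially a one-line pairing of Lemma \ref{pmgm-lemma5} with Lemma \ref{drmi-lemma1}.
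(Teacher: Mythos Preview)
Your proposal is correct and follows essentially the same approach as the paper: reduce the generating set $S$ to its set of $\preceq$-minimal elements via Lemma~\ref{drmi-lemma1}, and invoke Lemma~\ref{pmgm-lemma5} for finiteness. Your explicit justification that every element of $S$ lies above some $\preceq$-minimal element (via finiteness of principal down-sets) is a welcome addition, as the paper's proof tacitly assumes this step.
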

\begin{proof}
If $\bu\preceq \bv$, we can delete $\bv$ from the generating set $S$ to get the same well-mixed $\D$-ideal, since $\Y^\mathbf{v}\in\langle \Y^{\mathbf{u}}\rangle$ by Lemma \ref{drmi-lemma1}. So we only need to show that the set of minimal elements of $S$ with respect to the partial order $\preceq$ is finite, which follows from Lemma \ref{pmgm-lemma5}.
\end{proof}

\begin{corollary}\label{pmgm-cor}
Any strictly ascending chain of well-mixed $\D$-ideals generated by monomials in $R$ is finite.
\end{corollary}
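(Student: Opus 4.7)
The plan is to derive the ascending chain condition as a direct consequence of Theorem \ref{pmgm-thm} via the standard union argument. Suppose, for contradiction, that $I_1\subsetneq I_2\subsetneq\cdots$ is a strictly ascending chain of well-mixed $\D$-ideals in $R$, each of which admits a (possibly infinite) generating set of monomials as a well-mixed $\D$-ideal. Write $I_j=\langle M_j\rangle$ with $M_j$ a set of monomials, and set $I=\bigcup_{j\ge 1}I_j$.

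First I would verify that $I$ is itself a well-mixed $\D$-ideal generated by monomials. That $I$ is an ideal closed under $\sigma$ is routine from the fact that the chain is directed. For the well-mixed property, if $ab\in I$ then $ab\in I_j$ for some $j$, whence $ab^x\in I_j\subseteq I$. Finally, $I$ contains $M:=\bigcup_{j}M_j$, and any well-mixed $\D$-ideal containing $M$ contains every $I_j$ and hence $I$; therefore $I=\langle M\rangle$, so $I$ is a well-mixed $\D$-ideal generated by monomials.

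Now I apply Theorem \ref{pmgm-thm} to $I$: there exist finitely many monomials $\Y^{\bu_1},\ldots,\Y^{\bu_m}\in I$ such that $I=\langle\Y^{\bu_1},\ldots,\Y^{\bu_m}\rangle$. For each $i$, pick an index $j_i$ with $\Y^{\bu_i}\in I_{j_i}$, and set $N=\max_i j_i$. Then all the $\Y^{\bu_i}$ lie in $I_N$, hence $I=\langle\Y^{\bu_1},\ldots,\Y^{\bu_m}\rangle\subseteq I_N$. Combined with $I_N\subseteq I$, this gives $I_j=I_N$ for all $j\ge N$, contradicting the strictness of the chain.

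The only subtlety is the verification that $I$ really is generated as a well-mixed $\D$-ideal by the union of the $M_j$'s rather than by some larger set arising from well-mixed closure operations; but since the chain is directed, every well-mixed manipulation performed inside $I$ already takes place inside some $I_j$, so no new generators are needed. Everything else is a bookkeeping step built on Theorem \ref{pmgm-thm}, which supplies the essential finiteness input via Lemma \ref{pmgm-lemma5}.
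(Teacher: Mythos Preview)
Your proof is correct and follows essentially the same approach as the paper: form the union of the chain, observe it is again a well-mixed $\sigma$-ideal generated by monomials, apply Theorem \ref{pmgm-thm} to obtain a finite monomial generating set, and then locate all generators in some $I_N$ to force stabilization. Your write-up is simply more explicit about why the union inherits the well-mixed and monomial-generated properties.
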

\begin{proof}
Assume that $I_1\subseteq I_2\subseteq\ldots\subseteq I_k\ldots$ is an ascending chain of well-mixed $\D$-ideals generated by monomials. Then $\cup_{i=1}^{\infty}I_i$ is also a well-mixed $\D$-ideal generated by monomials. By Theorem \ref{pmgm-thm}, $\cup_{i=1}^{\infty}I_i$ is finitely generated by monomials, say $\{a_1,\dots,a_m\}$. Then there exists $k\in \N$ large enough such that $\{a_1,\dots,a_m\}\subset I_k$. It follows $I_k=I_{k+1}=\ldots=\cup_{i=1}^{\infty}I_i$.
\end{proof}

\begin{remark}
It should be pointed out that a counter example due to Levin in \cite{levin} shows that Corollary \ref{pmgm-cor} does not hold even for well-mixed $\D$-ideals generated by binomials in $R$.
\end{remark}

\section{Prime Decomposition of Radical Well-Mixed Monomial $\D$-Ideals}
In this section, we will give a finite prime decomposition of radical well-mixed monomial $\D$-ideals. First let us prove some lemmas. Notations follow as Section 4.
\begin{lemma}\label{drmi-lemma}
Let $F$ and $G$ be subsets of any $\D$-ring $R$. Then
\begin{enumerate}[(a)]
  \item $F^{[1]}G^{[1]}\subseteq(FG)^{[1]}$;
  \item $F^{[i]}G^{[i]}\subseteq(FG)^{[i]}$ for $i=1,2,\ldots$;
  \item $F^{[i]}\cup G^{[i]}\subseteq\sqrt{(FG)^{[i]}}$ for $i=1,2,\ldots$.
\end{enumerate}
\end{lemma}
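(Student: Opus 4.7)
The plan is to prove each part through a direct calculation on generators of the $'$-operation, using induction on the index $i$ for parts (b) and (c) and taking (a) as the base case.

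For part (a), I would take arbitrary generators $u \in F^{[1]} = F'$ and $v \in G^{[1]} = G'$. By the definition of the $'$-operation we have $u = a\sigma(b)$ with $ab \in F$ and $v = c\sigma(d)$ with $cd \in G$. Multiplying,
\[
uv \;=\; a\sigma(b)\cdot c\sigma(d) \;=\; (ac)\,\sigma(bd),
\]
and the factorization $(ac)(bd) = (ab)(cd) \in FG$ witnesses $uv \in (FG)' = (FG)^{[1]}$.

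For part (b), I would induct on $i \geq 1$. The base case $i = 1$ is exactly (a). For the inductive step, assume $F^{[i]}G^{[i]} \subseteq (FG)^{[i]}$ and take generators $u = a\sigma(b) \in F^{[i+1]} = (F^{[i]})'$ (so $ab \in F^{[i]}$) and $v = c\sigma(d) \in G^{[i+1]} = (G^{[i]})'$ (so $cd \in G^{[i]}$). The same identity gives $uv = (ac)\sigma(bd)$, and the inductive hypothesis places $(ac)(bd) = (ab)(cd)$ in $F^{[i]}G^{[i]} \subseteq (FG)^{[i]}$. Hence $uv \in ((FG)^{[i]})' = (FG)^{[i+1]}$, completing the induction.

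For part (c), by the symmetry between $F$ and $G$ it suffices to show $F^{[i]} \subseteq \sqrt{(FG)^{[i]}}$. For any $u \in F^{[i]}$ it is enough to exhibit some power $u^N$ in $(FG)^{[i]}$, since then $u \in \sqrt{(FG)^{[i]}}$ by definition. My approach is to combine $u$ with a carefully chosen companion $v \in G^{[i]}$ built from the same factorization data as $u$, so that the product $uv$ — which lies in $(FG)^{[i]}$ by part (b) — can be absorbed into a pure power of $u$. Concretely, writing $u = a\sigma(b)$ at the final stage of the recursive definition, I would produce $v \in G^{[i]}$ of a matching shape so that $uv$ differs from $u^{2}$ only by an element that is already witnessed to be in $\sqrt{(FG)^{[i]}}$ by descent. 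The main obstacle will be constructing such a companion $v$ systematically at each stage of the recursion, tracking the factorizations in $F$ and $G$ in parallel through the chain of $'$-operations; this is the step where the proof is likely to require the most care, and where the radical closure on the right-hand side becomes essential rather than incidental.
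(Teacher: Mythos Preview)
Your arguments for (a) and (b) are correct and are essentially identical to the paper's: the paper proves (a) by the same factorization $a\sigma(b)\cdot c\sigma(d)=(ac)\sigma(bd)$ with $(ac)(bd)=(ab)(cd)\in(FG)$, and obtains (b) by induction via
\[
F^{[i]}G^{[i]}=(F^{[i-1]})^{[1]}(G^{[i-1]})^{[1]}\subseteq (F^{[i-1]}G^{[i-1]})^{[1]}\subseteq ((FG)^{[i-1]})^{[1]}=(FG)^{[i]}.
\]

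For (c), however, your elaborate ``companion element'' scheme is aimed at a statement that is \emph{false} as written. Take $R=k\{y_1,y_2\}$, $F=\{y_1\}$, $G=\{y_2\}$. Then $y_1\in F\subseteq F^{[1]}$, but
\[
\sqrt{(FG)^{[1]}}\subseteq \langle y_1y_2\rangle_r=[y_1]\cap[y_2],
\]
and $y_1\notin[y_2]$. Hence $F^{[1]}\not\subseteq\sqrt{(FG)^{[1]}}$, so the inclusion $F^{[i]}\cup G^{[i]}\subseteq\sqrt{(FG)^{[i]}}$ cannot hold in general. No construction of a companion $v\in G^{[i]}$ will salvage this: there is simply no power of $y_1$ in the $\sigma$-ideal generated by $(y_1y_2)^{[i]}$.

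The symbol $\cup$ in (c) is a typo for $\cap$; the only place the lemma is invoked (the proof of the proposition $\langle F\rangle_r\cap\langle G\rangle_r=\langle FG\rangle_r$) uses precisely $F^{[i]}\cap G^{[i]}\subseteq\sqrt{(FG)^{[i]}}$. With $\cap$ the proof collapses to one line, and this is exactly what the paper writes: if $a\in F^{[i]}\cap G^{[i]}$ then $a^2=a\cdot a\in F^{[i]}G^{[i]}\subseteq (FG)^{[i]}$ by (b), whence $a\in\sqrt{(FG)^{[i]}}$. So once you read $\cap$ for $\cup$, no recursion, no companion elements, and no ``descent'' are needed---just square the element.
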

\begin{proof}
\begin{enumerate}[(a)]
  \item Let $a\sigma(b)\in F^{[1]}$ and $c\sigma(d)\in G^{[1]}$ such that $ab\in (F)$ and $cd\in (G)$. Then $abcd\in (FG)$ and it follows that $ac\sigma(bd)=a\sigma(b)c\sigma(d)\in (FG)^{[1]}$. So $F^{[1]}G^{[1]}\subseteq(FG)^{[1]}$.
  \item We prove (b) by induction on $i$. The case $i=1$ is proved by (a). For the inductive step, assume now $i\ge2$. Then by (a) and the induction hypothesis,
  \begin{align*}
  F^{[i]}G^{[i]}&=(F^{[i-1]})^{[1]}(G^{[i-1]})^{[1]}\subseteq (F^{[i-1]}G^{[i-1]})^{[1]}\\
  &\subseteq ((FG)^{[i-1]})^{[1]}=(FG)^{[i]}.
  \end{align*}
  \item For any $a\in F^{[i]}\cup G^{[i]}$, $a^2\in F^{[i]}G^{[i]}$. It follows that $a\in \sqrt{F^{[i]}G^{[i]}}\subseteq \sqrt{(FG)^{[i]}}$.
\end{enumerate}
\end{proof}

\begin{proposition}\label{drmi-prop}
Let $F$ and $G$ be subsets of any $\D$-ring $R$. Then
\[\langle F\rangle_r\cap\langle G\rangle_r=\langle FG\rangle_r.\]
As a corollary, if $I$ and $J$ are two $\D$-ideals of $R$, then
\[\langle I\rangle_r\cap\langle J\rangle_r=\langle I\cap J\rangle_r=\langle IJ\rangle_r.\]
\end{proposition}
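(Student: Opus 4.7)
The plan is to prove the two inclusions separately, with the nontrivial direction being a clean application of Lemma~\ref{drmi-lemma}(b) and the description~(\ref{pmgm-equ3}), and the easy direction being a formal monotonicity argument.

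For the easy inclusion $\langle FG\rangle_r \subseteq \langle F\rangle_r \cap \langle G\rangle_r$: for any $f\in F$ and $g\in G$ one has $fg\in \langle F\rangle$ (since $f\in F\subseteq \langle F\rangle$ and $\langle F\rangle$ is an ideal) and symmetrically $fg\in \langle G\rangle$, so $FG\subseteq \langle F\rangle_r\cap \langle G\rangle_r$. The intersection $\langle F\rangle_r\cap \langle G\rangle_r$ is visibly a radical well-mixed $\sigma$-ideal (these three properties are each preserved under intersections of $\sigma$-ideals), so it contains the minimal such ideal over $FG$, namely $\langle FG\rangle_r$.

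For the reverse inclusion $\langle F\rangle_r\cap\langle G\rangle_r \subseteq \langle FG\rangle_r$, let $a\in \langle F\rangle_r\cap \langle G\rangle_r=\sqrt{\langle F\rangle}\cap \sqrt{\langle G\rangle}$. Then there exist $m,n\in\N^*$ with $a^m\in \langle F\rangle$ and $a^n\in \langle G\rangle$, so by~(\ref{pmgm-equ3}) we get $a^m\in F^{[i]}$ and $a^n\in G^{[j]}$ for some $i,j$. Because the sequences $F^{[k]}$ and $G^{[k]}$ are nondecreasing (since $F\subseteq F'$ for any set, so $F^{[k-1]}\subseteq F^{[k]}$), setting $k=\max(i,j)$ makes $a^m\in F^{[k]}$ and $a^n\in G^{[k]}$ simultaneously. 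Multiplying and applying Lemma~\ref{drmi-lemma}(b),
\[
a^{m+n}=a^m\cdot a^n\in F^{[k]}G^{[k]}\subseteq (FG)^{[k]}\subseteq \langle FG\rangle\subseteq \langle FG\rangle_r,
\]
and since $\langle FG\rangle_r$ is radical, $a\in \langle FG\rangle_r$.

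For the corollary, apply the proposition with $F=I$ and $G=J$ to obtain $\langle I\rangle_r\cap \langle J\rangle_r=\langle IJ\rangle_r$. Since $IJ\subseteq I\cap J\subseteq I$ and $I\cap J\subseteq J$, monotonicity of $\langle\cdot\rangle_r$ gives $\langle IJ\rangle_r\subseteq \langle I\cap J\rangle_r\subseteq \langle I\rangle_r\cap \langle J\rangle_r$, and these extremes already coincide by what we just proved, forcing all three ideals to be equal. The only potential obstacle is choosing a common index $k$ for both $F^{[\cdot]}$ and $G^{[\cdot]}$, but this is immediate from the monotonicity of the $F^{[k]}$-construction, so the proof is essentially a bookkeeping exercise on top of Lemma~\ref{drmi-lemma}(b).
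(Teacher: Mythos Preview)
Your proof is correct and follows essentially the same route as the paper: both arguments reduce to the description~(\ref{pmgm-equ3}) of the well-mixed closure together with Lemma~\ref{drmi-lemma}. The paper packages the hard inclusion as a single chain of set-theoretic equalities invoking part~(c), whereas you unpack it element-wise and call part~(b) directly; since (c) is an immediate corollary of (b), this is only a cosmetic difference.
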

\begin{proof}
$\langle F\rangle_r\cap\langle G\rangle_r\supseteq \langle FG\rangle_r$ is clear. It is enough to show the converse. Because of (\ref{pmgm-equ3}),
\begin{align*}
  \langle F\rangle_r\cap\langle G\rangle_r&=\sqrt{\langle F\rangle}\cap\sqrt{\langle G\rangle}=\sqrt{\cup_{i=0}^{\infty}F^{[i]})}\cap\sqrt{\cup_{i=0}^{\infty}G^{[i]})}\\
  &=\sqrt{\cup_{i=0}^{\infty}(F^{[i]}\cap G^{[i]})}\subseteq\sqrt{\cup_{i=0}^{\infty}\sqrt{(FG)^{[i]}}}=\langle FG\rangle_r,
\end{align*}
where the inclusion follows from Lemma \ref{drmi-lemma}(c).
\end{proof}

\begin{lemma}\label{ddi-lemma}
Let $I$ be a radical well-mixed $\D$-ideal of $R$. Suppose $\Y^{\mathbf{u}_1},\Y^{\mathbf{u}_2}$ are two monomials in $R$ such that $\Y^{\mathbf{u}_1+\mathbf{u}_2}\in I$. Then
\[I=\langle I, \Y^{\mathbf{u}_1}\rangle_r\cap\langle I, \Y^{\mathbf{u}_2}\rangle_r.\]
\end{lemma}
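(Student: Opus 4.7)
The plan is to reduce the lemma directly to Proposition \ref{drmi-prop}, which gives the product/intersection identity $\langle F\rangle_r\cap\langle G\rangle_r=\langle FG\rangle_r$ for radical well-mixed closures.

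The inclusion $I\subseteq\langle I,\Y^{\mathbf{u}_1}\rangle_r\cap\langle I,\Y^{\mathbf{u}_2}\rangle_r$ is immediate, since $I$ is contained in each factor on the right. For the reverse inclusion, I would take $F=I\cup\{\Y^{\mathbf{u}_1}\}$ and $G=I\cup\{\Y^{\mathbf{u}_2}\}$ and apply Proposition \ref{drmi-prop} to obtain
\[\langle I,\Y^{\mathbf{u}_1}\rangle_r\cap\langle I,\Y^{\mathbf{u}_2}\rangle_r=\langle F\rangle_r\cap\langle G\rangle_r=\langle FG\rangle_r.\]

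Now I would expand the product set $FG$, which is contained in $I\cdot I\cup I\cdot\{\Y^{\mathbf{u}_2}\}\cup\{\Y^{\mathbf{u}_1}\}\cdot I\cup\{\Y^{\mathbf{u}_1+\mathbf{u}_2}\}$. Each of the first three pieces lies in $I$ because $I$ is an ideal, and the final element $\Y^{\mathbf{u}_1+\mathbf{u}_2}$ lies in $I$ by hypothesis. Hence $FG\subseteq I$, which gives $\langle FG\rangle_r\subseteq\langle I\rangle_r=I$, using that $I$ is already a radical well-mixed $\sigma$-ideal and hence equals its own radical well-mixed closure. Combining both inclusions yields the stated equality.

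There is no real obstacle here: the only point that needs attention is making sure Proposition \ref{drmi-prop} is applicable to arbitrary subsets $F,G$ (it is, as stated), and that the closure operator $\langle\cdot\rangle_r$ is idempotent on radical well-mixed $\sigma$-ideals (immediate from its definition as the minimal such ideal containing the set).
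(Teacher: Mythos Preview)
Your proof is correct and follows essentially the same route as the paper: both apply Proposition~\ref{drmi-prop} to $F=I\cup\{\Y^{\mathbf{u}_1}\}$ and $G=I\cup\{\Y^{\mathbf{u}_2}\}$ and then observe that the resulting product set lies in $I$. The paper compresses this into the single line $\langle I,\Y^{\mathbf{u}_1}\rangle_r\cap\langle I,\Y^{\mathbf{u}_2}\rangle_r=\langle I,\Y^{\mathbf{u}_1+\mathbf{u}_2}\rangle_r=I$, whereas you spell out the product expansion and the two inclusions explicitly.
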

\begin{proof}
By Proposition \ref{drmi-prop},
\[\langle I, \Y^{\mathbf{u}_1}\rangle_r\cap\langle I, \Y^{\mathbf{u}_2}\rangle_r=\langle I, \Y^{\mathbf{u}_1+\mathbf{u}_2}\rangle_r=I.\]
\end{proof}

For $\mathbf{b}=(b_1,\ldots,b_n)\in (\N\cup\{-1\})^n$, we define
\[\mathfrak{m}^{\mathbf{b}}:=[y_i^{x^{b_i}}\mid b_i\neq -1],\]
which is a prime monomial $\D$-ideal.

For $m\in \N^*$, denote $[m]=\{1,\ldots,m\}$.

Now we can prove the decomposition theorem of radical well-mixed monomial $\D$-ideals.
\begin{theorem}\label{drdi-thm}
Let $I=\langle \Y^{\bu}:\bu\in S\rangle_r$ where $S\subseteq \N[x]^n$. Then $I$ can be written as a finite intersection of prime monomial $\D$-ideals of the forms $\mathfrak{m}^{\mathbf{b}}$. That is, there exist $\mathbf{b}_1,\ldots,\mathbf{b}_s\in (\N\cup\{-1\})^n$ such that
\[I=\mathfrak{m}^{\mathbf{b}_1}\cap\ldots\cap\mathfrak{m}^{\mathbf{b}_s}.\]
Moreover, if the decomposition is irredundant, then it is unique.
\end{theorem}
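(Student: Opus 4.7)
The plan is to establish existence by induction on the total weight of a finite monomial generating set, using Lemma~\ref{ddi-lemma} to split generators until they become atomic, and to derive uniqueness from the standard minimal-prime argument. First I would invoke Theorem~\ref{pmgm-thm} to reduce to the case $I=\langle\Y^{\bu_1},\ldots,\Y^{\bu_m}\rangle_r$ with finitely many monomial generators (and we may assume $I\neq R$, so all $\bu_j\neq 0$). Writing $|\bu|=\sum_{i=1}^n|u_i|$ for the total weight of $\bu\in\N[x]^n$, I induct on $N=\sum_{j=1}^m|\bu_j|$. In the base case every $|\bu_j|=1$, so each $\Y^{\bu_j}$ is atomic of the form $\sigma^{a_j}(y_{i_j})$; setting $b_i=\min\{a_j:i_j=i\}$ with the convention $b_i=-1$ when no such $j$ exists, one checks $[\Y^{\bu_1},\ldots,\Y^{\bu_m}]=\mathfrak{m}^{\mathbf{b}}$, which is already prime (hence radical and well-mixed), so $I=\mathfrak{m}^{\mathbf{b}}$ is a one-term decomposition.

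For the inductive step I pick a generator with $|\bu_j|\geq 2$ and split $\bu_j=\bu_j'+\bu_j''$ with both summands nonzero. Since $\Y^{\bu_j}=\Y^{\bu_j'+\bu_j''}\in I$, Lemma~\ref{ddi-lemma} yields
\[I=\langle I,\Y^{\bu_j'}\rangle_r\cap\langle I,\Y^{\bu_j''}\rangle_r.\]
In each factor the old generator $\Y^{\bu_j}$ is a multiple of the newly added one and may be deleted, dropping the total weight by $|\bu_j''|>0$ and $|\bu_j'|>0$ respectively. The inductive hypothesis then applies to each factor, and the two finite intersections of $\mathfrak{m}^{\mathbf{b}}$'s that result combine into the desired decomposition of $I$.

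For uniqueness I would use that irredundancy of $I=\mathfrak{m}^{\mathbf{b}_1}\cap\cdots\cap\mathfrak{m}^{\mathbf{b}_s}$ forces the $\mathfrak{m}^{\mathbf{b}_i}$ to form an antichain under inclusion. Then the $\mathfrak{m}^{\mathbf{b}_i}$ are precisely the minimal primes over $I$: any minimal prime $P\supseteq I$ must contain some $\mathfrak{m}^{\mathbf{b}_i}$ by primeness applied to $\cap_j\mathfrak{m}^{\mathbf{b}_j}\subseteq P$, whence $P=\mathfrak{m}^{\mathbf{b}_i}$ by minimality; conversely each $\mathfrak{m}^{\mathbf{b}_i}$ is itself minimal since a strictly smaller prime containing $I$ would by the same argument contain $\mathfrak{m}^{\mathbf{b}_j}$ for some $j\neq i$, contradicting the antichain condition. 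Since the set of minimal primes is an intrinsic invariant, the $\mathbf{b}_i$ are unique up to reordering. The main obstacle is the bookkeeping in the inductive step: verifying that after splitting and removing the redundant generator each of the two new ideals is still described by finitely many monomials so that the inductive hypothesis fires, which is automatic because we only adjoin one monomial at each step, and the strict decrease of $N$ guarantees termination.
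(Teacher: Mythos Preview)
Your proof is correct and follows essentially the same strategy as the paper: both iterate Lemma~\ref{ddi-lemma} to split monomial generators down to atoms $y_i^{x^{b_i}}$, at which point each intersectand is a prime $\D$-ideal of the form $\mathfrak{m}^{\mathbf{b}}$. The only organizational difference is in how finiteness is secured: you invoke Theorem~\ref{pmgm-thm} up front to reduce to finitely many monomial generators and then run a clean induction on the total weight $N=\sum_j|\bu_j|$, which makes termination explicit; the paper instead leaves the iterative splitting informal, obtains a possibly infinite family of intersectands, and then argues via a Dickson-type lemma that the irredundant subfamily must be finite. Your route is arguably more rigorous about termination but costs the earlier (nontrivial) result on finite generation, whereas the paper's Dickson argument is self-contained at this point. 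For uniqueness, your minimal-primes characterization and the paper's direct mutual-containment comparison are the same argument in different clothing.
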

\begin{proof}
By Lemma \ref{ddi-lemma}, if a monomial $\Y^{\mathbf{u}}\in I$ and $\mathbf{u}=\mathbf{u}_1+\mathbf{u}_2$, then $I=\langle I, \Y^{\mathbf{u}_1}\rangle_r\cap\langle I, \Y^{\mathbf{u}_2}\rangle_r$. Iterating this process eventually write $I$ as follows:
\[I=\cap\langle y_{i_1}^{x^{b_{i_1}}},\ldots, y_{i_k}^{x^{b_{i_k}}}\rangle_r.\]
Note that $[y_{i_1}^{x^{b_{i_1}}}, \ldots, y_{i_k}^{x^{b_{i_k}}}]$ is a prime $\D$-ideal, therefore
\[\langle y_{i_1}^{x^{b_{i_1}}}, \ldots, y_{i_k}^{x^{b_{i_k}}}\rangle_r=[y_{i_1}^{x^{b_{i_1}}}, \ldots, y_{i_k}^{x^{b_{i_k}}}]\]
and
\[I=\cap[y_{i_1}^{x^{b_{i_1}}}, \ldots, y_{i_k}^{x^{b_{i_k}}}].\]

After deleting unnecessary intersectands, we can assume that the intersection is irredundant. Using an argument similar to the proof of Dickson's lemma, we see that this irredundant intersection must be finite. So there exist $\mathbf{b}_1,\ldots,\mathbf{b}_s\in (\N\cup\{-1\})^n$ such that
\[I=\mathfrak{m}^{\mathbf{b}_1}\cap\ldots\cap\mathfrak{m}^{\mathbf{b}_s}.\]

Let $\mathfrak{m}^{\mathbf{b}_1}\cap\ldots\cap\mathfrak{m}^{\mathbf{b}_s}=\mathfrak{m}^{\mathbf{a}_1}\cap\ldots\cap
\mathfrak{m}^{\mathbf{a}_t}$ be two irredundant decompositions of $I$. We will show that for each $i\in [s]$, there exists $j\in [t]$ such that $\mathfrak{m}^{\mathbf{a}_j}\subseteq \mathfrak{m}^{\mathbf{b}_i}$. By symmetry, we then also have that for each $k\in [t]$, there exists $l\in [s]$ such that $\mathfrak{m}^{\mathbf{b}_l}\subseteq \mathfrak{m}^{\mathbf{a}_k}$. This implies that $s=t$ and $\{\mathfrak{m}^{\mathbf{b}_1},\ldots,\mathfrak{m}^{\mathbf{b}_s}\}=\{\mathfrak{m}^{\mathbf{a}_1},\ldots,
\mathfrak{m}^{\mathbf{a}_t}\}$.

In fact, let $i\in [s]$. We may assume that $\mathfrak{m}^{\mathbf{b}_i}=[y_1^{x^{b_{i1}}},\ldots,y_r^{x^{b_{ir}}}]$. Suppose that $\mathfrak{m}^{\mathbf{a}_j}\nsubseteq \mathfrak{m}^{\mathbf{b}_i}$ for all $j\in [t]$. Then for each $j$ there exists $y_{l_j}^{x^{c_j}}\in \mathfrak{m}^{\mathbf{a}_j}\backslash \mathfrak{m}^{\mathbf{b}_i}$. It follows that either $l_j\notin[r]$ or $c_j<b_{il_j}$. Let
\[a=\prod_{j=1}^ty_{l_j}^{x^{c_j}}.\]
We have $a\in \cap_{j=1}^t\mathfrak{m}^{\mathbf{a}_j}\subseteq \mathfrak{m}^{\mathbf{b}_i}$. Therefore, there exists $j\in[t]$ such that $l_j\in[r]$ and $b_{il_j}\le c_j$. This is impossible.
\end{proof}

If $I$ is a radical well-mixed monomial $\D$-ideal, then the irredundant prime decomposition of $I$ obtained in Theorem \ref{drdi-thm} is called the {\em standard prime decomposition} of $I$ and each $\mathfrak{m}^{\mathbf{b}_i}$ is called an {\em irreducible component} of $I$.

\begin{corollary}
The radical well-mixed closure of a monomial $\D$-ideal is still a monomial $\D$-ideal.
\end{corollary}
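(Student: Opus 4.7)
The plan is to reduce the corollary directly to Theorem \ref{drdi-thm}. Let $I$ be a monomial $\D$-ideal. By definition, $I$ admits a set of monomial generators, say $I = [\Y^{\bu}:\bu\in S]$ for some $S\subseteq\N[x]^n$. Since the minimal radical well-mixed $\D$-ideal containing a set $F$ depends only on $F$, we then have
\[
\langle I\rangle_r = \langle\Y^{\bu}:\bu\in S\rangle_r.
\]
This puts $\langle I\rangle_r$ into exactly the form handled by Theorem \ref{drdi-thm}.

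Applying Theorem \ref{drdi-thm} I would produce $\mathbf{b}_1,\ldots,\mathbf{b}_s\in(\N\cup\{-1\})^n$ such that
\[
\langle I\rangle_r = \mathfrak{m}^{\mathbf{b}_1}\cap\cdots\cap\mathfrak{m}^{\mathbf{b}_s}.
\]
Each $\mathfrak{m}^{\mathbf{b}_i}=[y_j^{x^{b_{ij}}}\mid b_{ij}\neq -1]$ is generated by monomials, hence is a monomial $\D$-ideal.

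To conclude, I would invoke Lemma \ref{ddi-lemma}'s predecessor, namely the fact (proved earlier in Lemma 3.5) that the intersection of two monomial $\D$-ideals is again a monomial $\D$-ideal, and iterate it finitely many times to obtain that $\mathfrak{m}^{\mathbf{b}_1}\cap\cdots\cap\mathfrak{m}^{\mathbf{b}_s}$ is a monomial $\D$-ideal. This gives the claim. There is essentially no obstacle here: the real work has already been done in Theorem \ref{drdi-thm}, and the corollary is a one-line consequence of combining that decomposition with the closure of monomial $\D$-ideals under finite intersections.
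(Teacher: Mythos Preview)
Your proposal is correct and matches the paper's own proof essentially line for line: apply Theorem \ref{drdi-thm} to write $\langle I\rangle_r$ as a finite intersection of the prime monomial $\D$-ideals $\mathfrak{m}^{\mathbf{b}_i}$, then use the earlier lemma that finite intersections of monomial $\D$-ideals are monomial. The only quibble is the numbering—the intersection lemma is Lemma~3.4 in the paper, not 3.5—but the argument is identical.
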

\begin{proof}
Suppose $I$ is a monomial $\D$-ideal. By Theorem \ref{drdi-thm}, there exist $\mathbf{b}_1,\ldots,\mathbf{b}_s\in (\N\cup\{-1\})^n$ such that $\langle I\rangle_r=\cap_{i=1}^s\mathfrak{m}^{\mathbf{b}_i}$. Since every $\mathfrak{m}^{\mathbf{b}_i}$ is a monomial $\D$-ideal and the intersection of monomial $\D$-ideals is a monomial $\D$-ideal, it follows that $\langle I\rangle_r$ is a monomial $\D$-ideal.
\end{proof}

\begin{corollary}\label{drmi-cor}
Every radical well-mixed monomial $\D$-ideal in $R$ is generated by finitely many monomials as a radical well-mixed $\D$-ideal.
\end{corollary}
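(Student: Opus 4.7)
The plan is to deduce Corollary \ref{drmi-cor} by feeding the standard prime decomposition from Theorem \ref{drdi-thm} into Proposition \ref{drmi-prop}. The key observation is that each irreducible component $\mathfrak{m}^{\mathbf{b}_i} = [y_j^{x^{b_{ij}}} \mid b_{ij}\neq -1]$ is by construction generated as a $\D$-ideal (hence, since it is prime and therefore radical well-mixed, also as a radical well-mixed $\D$-ideal) by a \emph{finite} set of monomials $F_i = \{y_j^{x^{b_{ij}}} \mid b_{ij}\neq -1\}$, with $|F_i|\leqslant n$.

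First I would invoke Theorem \ref{drdi-thm} to write the given radical well-mixed monomial $\D$-ideal $I$ as a finite intersection
\[I=\mathfrak{m}^{\mathbf{b}_1}\cap\cdots\cap\mathfrak{m}^{\mathbf{b}_s}.\]
Then, writing $\mathfrak{m}^{\mathbf{b}_i}=\langle F_i\rangle_r$ with $F_i$ finite as above, I would apply the corollary to Proposition \ref{drmi-prop} repeatedly: since each $\mathfrak{m}^{\mathbf{b}_i}$ is a $\D$-ideal, iterating
\[\langle I_1\rangle_r\cap\langle I_2\rangle_r=\langle I_1 I_2\rangle_r\]
gives
\[I=\langle \mathfrak{m}^{\mathbf{b}_1}\cdots\mathfrak{m}^{\mathbf{b}_s}\rangle_r=\langle F_1F_2\cdots F_s\rangle_r,\]
where $F_1F_2\cdots F_s=\{f_1f_2\cdots f_s\mid f_i\in F_i\}$. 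Since each $F_i$ is finite, the set $F_1F_2\cdots F_s$ is finite, and a product of monomials is still a monomial, so $I$ is generated by finitely many monomials as a radical well-mixed $\D$-ideal.

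I expect essentially no obstacle: the only subtle point is making sure that the corollary to Proposition \ref{drmi-prop} really does apply to the $\D$-ideals $\mathfrak{m}^{\mathbf{b}_i}$ (not merely to sets), but this is exactly the statement of that corollary, so its iteration to an intersection of $s$ radical well-mixed $\D$-ideals is immediate by induction on $s$. The finiteness of $s$ comes from Theorem \ref{drdi-thm}, and the finiteness of each $F_i$ is built into the definition of $\mathfrak{m}^{\mathbf{b}_i}$.
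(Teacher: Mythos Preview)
Your proposal is correct and follows essentially the same route as the paper: invoke the finite prime decomposition of Theorem \ref{drdi-thm} and then use Proposition \ref{drmi-prop} to rewrite the intersection as the radical well-mixed closure of a finite product of monomials. The paper's proof is terser (it just says ``By Proposition \ref{drmi-prop}, $\cap_{i=1}^s\mathfrak{m}^{\mathbf{b}_i}$ equals a radical well-mixed $\D$-ideal generated by finitely many monomials''), while you make the finite generating set $F_1\cdots F_s$ explicit; note that the final equality $\langle \mathfrak{m}^{\mathbf{b}_1}\cdots\mathfrak{m}^{\mathbf{b}_s}\rangle_r=\langle F_1\cdots F_s\rangle_r$ is most cleanly obtained by iterating the \emph{set} version $\langle F\rangle_r\cap\langle G\rangle_r=\langle FG\rangle_r$ directly on the $F_i$, bypassing the ideal product altogether.
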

\begin{proof}
Suppose $I$ is a radical well-mixed monomial $\D$-ideal. Let $I=\cap_{i=1}^s\mathfrak{m}^{\mathbf{b}_i}$ be the standard prime decomposition of $I$. By Proposition \ref{drmi-prop}, $\cap_{i=1}^s\mathfrak{m}^{\mathbf{b}_i}$ equals to a radical well-mixed $\D$-ideal which is generated by finitely many monomials, so $I$ is finitely generated as a radical well-mixed $\D$-ideal.
\end{proof}

By Corollary \ref{drmi-cor}, for a radical well-mixed monomial $\D$-ideal $I$ of $R$, there exist $\mathbf{a}_1,\ldots,\mathbf{a}_m\in(\N\cap\{-1\})^n$ with $\mathbf{a}_j=(a_{ji})_{i=1}^n, j=1,\ldots,m$ such that
\[I=\langle \prod_{i=1}^n y_i^{x^{a_{1i}}},\ldots,\prod_{i=1}^n y_i^{x^{a_{mi}}}\rangle_r,\]
where we set $x^{-1}=0$. We call $\{\mathbf{a}_1,\ldots,\mathbf{a}_m\}$ the {\em character vectors} of $I$ and call $\{\prod_{i=1}^n y_i^{x^{\mathbf{a}_{1i}}},\ldots,\prod_{i=1}^n y_i^{x^{\mathbf{a}_{mi}}}\}$ the {\em set of minimal generators} of $I$, denoted by $G(I)$.

\begin{corollary}\label{drdi-cor}
Any strictly ascending chain of radical well-mixed monomial $\D$-ideals in $R$ is finite.
\end{corollary}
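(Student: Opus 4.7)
The plan is to mirror the argument used in Corollary \ref{pmgm-cor}, substituting Corollary \ref{drmi-cor} for Theorem \ref{pmgm-thm}. Given an ascending chain $I_1 \subseteq I_2 \subseteq \cdots$ of radical well-mixed monomial $\D$-ideals in $R$, I would form the union $J = \cup_{i=1}^{\infty} I_i$ and deduce that the chain stabilizes by producing a finite monomial generating set for $J$ and locating these generators in some $I_k$.

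The first step is to check that $J$ inherits all three closure properties. That $J$ is a $\D$-ideal is standard. If $a^2 \in J$, then $a^2 \in I_k$ for some $k$, so $a \in I_k \subseteq J$, giving radicality; the well-mixed property follows by exactly the same argument applied to the implication ``$ab\in I_k \Rightarrow ab^x \in I_k$''. Writing each $I_i$ via its support set $S_i$ as in Section 3, one has $J = k[\cup_i S_i]$, and $\cup_i S_i$ is immediately seen to be a character set, so $J$ is a monomial $\D$-ideal.

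The second step is to apply Corollary \ref{drmi-cor} to $J$, which yields finitely many monomials $a_1,\ldots,a_m$ that generate $J$ as a radical well-mixed $\D$-ideal. Each $a_j$ lies in some $I_{k_j}$, and setting $k = \max_j k_j$ places all the $a_j$ in $I_k$. Then the radical well-mixed $\D$-ideal they generate is contained in $I_k$, forcing $J \subseteq I_k$, and hence $I_k = I_{k+1} = \cdots = J$, so the chain terminates.

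There is really no hard step: the substantive work — finite generation of a radical well-mixed monomial $\D$-ideal as a radical well-mixed $\D$-ideal — has already been carried out in Corollary \ref{drmi-cor}, which in turn rests on the standard prime decomposition of Theorem \ref{drdi-thm}. The only point one must touch is the routine verification that monomiality, radicality, and the well-mixed property all pass to ascending unions; after that, Corollary \ref{drmi-cor} does the entire job.
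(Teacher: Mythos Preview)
Your proposal is correct and follows essentially the same argument as the paper: form the union of the chain, observe it is again a radical well-mixed monomial $\D$-ideal, apply Corollary \ref{drmi-cor} to obtain a finite monomial generating set, and locate those generators in some $I_k$. The only difference is that you spell out the verification that radicality, well-mixedness, and monomiality pass to ascending unions, whereas the paper simply asserts this in one line.
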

\begin{proof}
Assume that $I_1\subseteq I_2\subseteq\ldots\subseteq I_k\ldots$ is an ascending chain of radical well-mixed monomial $\D$-ideals. Then $\cup_{i=1}^{\infty}I_i$ is also a radical well-mixed monomial $\D$-ideal. By Corollary \ref{drmi-cor}, $\cup_{i=1}^{\infty}I_i$ is finitely generated by monomials, say $\{a_1,\dots,a_m\}$. Then there exists $k\in \N$ large enough such that $\{a_1,\dots,a_m\}\subset I_k$. It follows $I_k=I_{k+1}=\ldots=\cup_{i=1}^{\infty}I_i$.
\end{proof}

\begin{remark}
By Corollary \ref{drdi-cor}, Conjecture \ref{intro-conj} is valid for radical well-mixed monomial $\D$-ideals.
\end{remark}

In the following, we give a criterion to check if a monomial $\D$-ideal is radical well-mixed using its support set.
\begin{lemma}
An intersection of prime $\D$-ideals is radical well-mixed.
\end{lemma}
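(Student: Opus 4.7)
The plan is to reduce the statement to the fact that each of the three relevant properties—being a $\D$-ideal, being radical, and being well-mixed—is preserved under arbitrary intersections, and then to invoke the lemma cited earlier in the paper that a prime $\D$-ideal is automatically radical and well-mixed.

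First I would fix an indexed family $\{P_\alpha\}_{\alpha\in A}$ of prime $\D$-ideals of $R$ and set $I=\bigcap_{\alpha\in A}P_\alpha$. The goal is to check, in turn, that $I$ is a $\D$-ideal, that $I$ is radical, and that $I$ is well-mixed. For the $\D$-ideal property, if $a\in I$ then $a\in P_\alpha$ for every $\alpha$; since each $P_\alpha$ is a $\D$-ideal, $a^x\in P_\alpha$ for every $\alpha$, and hence $a^x\in I$. For radicality, if $a^m\in I$ for some $m\in\N^*$, then $a^m\in P_\alpha$ for every $\alpha$; each $P_\alpha$ is radical (being prime), so $a\in P_\alpha$ for every $\alpha$, hence $a\in I$.

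For the well-mixed property, suppose $ab\in I$. Then $ab\in P_\alpha$ for every $\alpha$. By the cited lemma, each prime $\D$-ideal $P_\alpha$ is well-mixed, so $ab^x\in P_\alpha$ for every $\alpha$, and therefore $ab^x\in I$. Combining the three checks gives that $I$ is a radical well-mixed $\D$-ideal, which is exactly what the lemma asserts.

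There is no substantive obstacle here: the entire argument is a direct verification using the definitions together with the fact, already recorded in the preliminary lemma of Section~2, that prime $\D$-ideals are both radical and well-mixed. The only thing worth being careful about is that the family of prime $\D$-ideals is allowed to be arbitrary (not just finite), but all three closure-under-intersection arguments above work for arbitrary intersections, so nothing needs to change.
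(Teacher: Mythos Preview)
Your proof is correct and follows exactly the same approach as the paper, which simply records that a prime $\D$-ideal is radical well-mixed and that an intersection of radical well-mixed $\D$-ideals is again radical well-mixed. You have merely unpacked the second of these two facts explicitly.
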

\begin{proof}
A prime $\D$-ideal is radical well-mixed and an intersection of radical well-mixed $\D$-ideals is radical well-mixed.
\end{proof}

\begin{corollary}\label{drmi-cor2}
Let $I=k[S]$ be a monomial $\D$-ideal of $R$. Then $I$ is radical well-mixed if and only if the following conditions are satisfied:
\begin{enumerate}[(a)]
  \item For $m\in \N^*$ and $\mathbf{u}\in\N[x]^n$, $m\mathbf{u}\in S$ implies $\mathbf{u}\in S$;
  \item For $\mathbf{u},\mathbf{v}\in\N[x]^n$, $\mathbf{u}+\mathbf{v}\in S$ implies $\mathbf{u}+x\mathbf{v}\in S$.
\end{enumerate}
\end{corollary}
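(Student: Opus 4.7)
The plan is to handle the necessity by inspection and reduce the sufficiency to the prime-intersection machinery already developed in this section. For necessity, condition (a) is Proposition~\ref{dpmi-prop1}(1) applied to the radical hypothesis; for (b), if $\mathbf{u}+\mathbf{v}\in S$ then $\Y^{\mathbf{u}}\Y^{\mathbf{v}}=\Y^{\mathbf{u}+\mathbf{v}}\in I$, and well-mixedness gives $\Y^{\mathbf{u}}(\Y^{\mathbf{v}})^{x}=\Y^{\mathbf{u}+x\mathbf{v}}\in I$, whence $\mathbf{u}+x\mathbf{v}\in S$.

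For sufficiency, radicality is Proposition~\ref{dpmi-prop1}(1) again, and for well-mixedness I will exhibit $I$ as an intersection of prime monomial $\D$-ideals $\mathfrak{m}^{\mathbf{b}}$ and then invoke the preceding lemma. For each $\mathbf{w}\notin S$ define $\mathbf{b}_{\mathbf{w}}\in(\N\cup\{-1\})^{n}$ by $(\mathbf{b}_{\mathbf{w}})_{i}=\deg w_{i}+1$, with the convention $\deg 0=-1$ (so $(\mathbf{b}_{\mathbf{w}})_{i}=0$ when $w_{i}=0$). Tautologically $\mathbf{w}$ is not in the support of $\mathfrak{m}^{\mathbf{b}_{\mathbf{w}}}$, so the nontrivial point is the containment $S\subseteq\mathfrak{m}^{\mathbf{b}_{\mathbf{w}}}$, which is equivalent to the key claim: \emph{if $\mathbf{u}\in S$ satisfies $\deg u_{i}\leq\deg w_{i}$ for every $i$, then $\mathbf{w}\in S$}.

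I prove the key claim by a shift-and-reduce argument. Set $d_{i}=\deg w_{i}$. For each coordinate $i$ with $u_{i}\neq 0$, apply (b) iteratively, taking the ``$\mathbf{u}$'' of (b) to be the current state minus $e_{i,j}$ and the ``$\mathbf{v}$'' of (b) to be $e_{i,j}$ (where $e_{i,j}$ is the unit vector at coordinate $i$, degree $j$); this moves one unit of coefficient mass of $u_{i}$ from position $j<d_{i}$ up to position $j+1$. Since $\deg u_{i}\leq d_{i}$, finitely many such shifts produce $\mathbf{u}'\in S$ with $u'_{i}=|u_{i}|x^{d_{i}}$ when $u_{i}\neq 0$ and $u'_{i}=0$ otherwise, where $|u_{i}|$ is the sum of the coefficients of $u_{i}$. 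Setting $u''_{i}=x^{d_{i}}$ when $u_{i}\neq 0$ and $u''_{i}=0$ otherwise, and $m=\max_{i}|u_{i}|$, I have $m\mathbf{u}''-\mathbf{u}'\in\N[x]^{n}$, so the character-set property forces $m\mathbf{u}''\in S$ and (a) then gives $\mathbf{u}''\in S$. Because $w_{i,d_{i}}\geq 1$ whenever $w_{i}\neq 0$, we have $\mathbf{u}''\leq\mathbf{w}$ entrywise, and one last application of the character-set property delivers $\mathbf{w}=\mathbf{u}''+(\mathbf{w}-\mathbf{u}'')\in S$. Thus $I=\bigcap_{\mathbf{w}\notin S}\mathfrak{m}^{\mathbf{b}_{\mathbf{w}}}$, an intersection of primes, and the preceding lemma concludes that $I$ is radical well-mixed.

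The main obstacle is orchestrating the iterated (b)-shifts so every intermediate element provably stays in $S$ and terminates at the intended configuration, and then combining this with (a) to strip the integer multiplicities that have accumulated. The degenerate case $\mathbf{u}=0$ is handled separately: it forces $0\in S$, whence the character-set property gives $S=\N[x]^{n}$ and $\mathbf{w}\in S$ is automatic.
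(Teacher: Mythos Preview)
Your proof is correct and runs along the same rails as the paper's: both arguments show that $I$ is an intersection of prime monomial $\D$-ideals $\mathfrak{m}^{\mathbf{b}}$ by first establishing the combinatorial fact that membership in $S$ depends only on the degree vector $(\deg u_1,\ldots,\deg u_n)$, and then invoking the preceding lemma. The paper compresses your shift-and-reduce argument into the one-line claim ``$\mathbf{u}\in S\Rightarrow x^{\deg(\mathbf{u})}\in S$'' and then works from the inside, taking the minimal degree vectors $U$, writing $S$ as a union indexed by $U$, and exchanging union with intersection to obtain a \emph{finite} decomposition $I=\cap_{i=1}^s\mathfrak{m}^{\mathbf{b}_i}$. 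You instead work from the outside: for each $\mathbf{w}\notin S$ you exhibit a separating prime $\mathfrak{m}^{\mathbf{b}_{\mathbf{w}}}$, obtaining $I=\bigcap_{\mathbf{w}\notin S}\mathfrak{m}^{\mathbf{b}_{\mathbf{w}}}$. Your route is slightly more direct for the purpose of this corollary (no Dickson-type finiteness, no union--intersection swap), while the paper's route yields the finite decomposition as a byproduct, which is reused later. A small stylistic point: once you have written $I$ as an intersection of primes, the separate appeal to Proposition~\ref{dpmi-prop1}(1) for radicality is redundant, since the preceding lemma already gives both radicality and well-mixedness.
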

\begin{proof}
``$\Rightarrow$" is clear.

``$\Leftarrow$". For $\mathbf{u}=(u_1,\ldots,u_n)\in \N[x]^n$, define $\deg(\mathbf{u})=(\deg(u_1),\ldots,\deg(u_n))$ and set $\deg(0)=-1$. If $\mathbf{b}=(b_1,\ldots,b_n)\in (\N\cup\{-1\})^n$, then let $x^{\mathbf{b}}=(x^{b_1},\ldots,x^{b_n})$ and set $x^{-1}=0$. So from (a) and (b), we obtain
$$\forall \mathbf{u}\in S\Rightarrow x^{\deg(\mathbf{u})}\in S.$$

Let $U$ be the subset of $S$ which is the set of minimal elements in $\{x^{\deg(\mathbf{u})}\mid \mathbf{u}\in S\}$ with respect to the order $\leqslant$($\bu=(u_i)_{i=1}^n\leqslant\bv=(v_i)_{i=1}^n$ if and only if $\deg(u_i)\leqslant\deg(v_i)$ for all $i$). Using an argument similar to the proof of Dickson's lemma, we see that $U$ is a finite set. Moreover,
\begin{equation}
S=\{\mathbf{v}\in \N[x]^n\mid \mathbf{u}\leqslant\mathbf{v} \textrm{ for some } \mathbf{u}\in U\}.
\end{equation}
Or equivalently,
\begin{equation}\label{drmi-equ1}
S=\cup_{\mathbf{u}\in U}\cap_{i=1}^n\{\mathbf{v}\in \N[x]^n\mid \deg(u_i)\leqslant \deg(v_i)\}.
\end{equation}
Exchange $\cup$ and $\cap$ in (\ref{drmi-equ1}), and we obtain $\mathbf{b}_1,\ldots,\mathbf{b}_s\in (\N\cup\{-1\})^n$ with $\mathbf{b}_i=\{b_{ij}\}_{j=1}^n, i=1,\ldots,s$ such that
\begin{equation}\label{drmi-equ2}
S=\cap_{i=1}^s\cup_{j=1}^n\{\mathbf{v}\in \N[x]^n\mid b_{ij}\leqslant \deg(v_i)\}.
\end{equation}
Because of (\ref{drmi-equ2}), we have
\[I=k[S]=\cap_{i=1}^s\mathfrak{m}^{\mathbf{b}_i}.\]
Since all $\mathfrak{m}^{\mathbf{b}_i}$ are prime $\D$-ideals, $I$ is radical well-mixed.
\end{proof}

Suppose $S$ is a subset of $\N[x]^n$. Let
\[S'=\{\mathbf{u}+x\mathbf{v}\mid \mathbf{u}+\mathbf{v}\in S, \mathbf{u},\mathbf{v}\in \N[x]^n\}.\]
Let $S^{[0]}=S$ and recursively we define $S^{[k]}=[S^{[k-1]}]'(k=1,2,\ldots)$. Denote
\[\langle S\rangle=\cup_{k=0}^{\infty}S^{[k]}.\]
\begin{corollary}
Let $I=k[S]$ be a monomial $\D$-ideal of $R$. Then $\langle I\rangle_r=k[\sqrt{\langle S\rangle}]$.
\end{corollary}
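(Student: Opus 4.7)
My plan is to prove the two inclusions $\langle I\rangle_r \subseteq k[\sqrt{\langle S\rangle}]$ and $k[\sqrt{\langle S\rangle}] \subseteq \langle I\rangle_r$ separately, using Corollary \ref{drmi-cor2} to certify the right-hand side as radical well-mixed for the first direction, and an inductive passage through the layers $S^{[k]}$ for the second. The guiding principle is that the operation $T\mapsto T'=\{\bu+x\bv\mid \bu+\bv\in T\}$ on subsets of $\N[x]^n$ is the monomial shadow of the operation $F\mapsto F'=\{a\sigma(b)\mid ab\in F\}$ used in (\ref{pmgm-equ3}) to build $\langle I\rangle$.

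For the inclusion $\langle I\rangle_r\subseteq k[\sqrt{\langle S\rangle}]$, I would first verify that $\langle S\rangle$ is a character set: each $[S^{[k-1]}]$ is a character set by definition of $[\,\cdot\,]$, and a direct check shows that the operator $T\mapsto T'$ preserves closure under addition by elements of $\N[x]^n$ and under multiplication by $x$, so each $S^{[k]}=[S^{[k-1]}]'$, and hence the ascending union $\langle S\rangle$, is a character set. Next, I would verify property (b) of Corollary \ref{drmi-cor2} for $\langle S\rangle$: if $\bu+\bv\in S^{[k]}$ then $\bu+\bv\in[S^{[k]}]$, so $\bu+x\bv\in[S^{[k]}]'=S^{[k+1]}\subseteq\langle S\rangle$. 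Passing to the radical, $\sqrt{\langle S\rangle}$ satisfies (a) by definition of the radical, and (b) persists because $m(\bu+\bv)\in\langle S\rangle$ implies $m\bu+x(m\bv)=m(\bu+x\bv)\in\langle S\rangle$. Corollary \ref{drmi-cor2} then certifies $k[\sqrt{\langle S\rangle}]$ as a radical well-mixed monomial $\D$-ideal containing $I=k[S]$, and minimality of $\langle I\rangle_r$ yields the inclusion.

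For the reverse inclusion $k[\sqrt{\langle S\rangle}]\subseteq\langle I\rangle_r$, I would prove by induction on $k$ that $\bu\in S^{[k]}$ implies $\Y^{\bu}\in\langle I\rangle$. The base case $k=0$ is immediate since $\Y^{\bu}\in I$. For the inductive step, write $\bu=\bu_1+x\bu_2$ with $\bu_1+\bu_2\in[S^{[k-1]}]$, so $\bu_1+\bu_2=g\bv+\mathbf{t}$ for some $\bv\in S^{[k-1]}$, $g\in\N[x]^*$, and $\mathbf{t}\in\N[x]^n$. The inductive hypothesis gives $\Y^{\bv}\in\langle I\rangle$, whence $\Y^{\bu_1}\Y^{\bu_2}=(\Y^{\bv})^g\Y^{\mathbf{t}}\in\langle I\rangle$, and the well-mixed property produces $\Y^{\bu_1}(\Y^{\bu_2})^x=\Y^{\bu_1+x\bu_2}=\Y^{\bu}\in\langle I\rangle$. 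For a general $\bu\in\sqrt{\langle S\rangle}$ some $m\bu$ lies in $\langle S\rangle$, so $(\Y^{\bu})^m=\Y^{m\bu}\in\langle I\rangle$ and therefore $\Y^{\bu}\in\sqrt{\langle I\rangle}$; since the radical of a well-mixed $\D$-ideal is itself radical well-mixed, $\sqrt{\langle I\rangle}=\langle I\rangle_r$, giving the inclusion.

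The main bookkeeping obstacle is checking that the interleaved operators $[\,\cdot\,]$ and $(\,\cdot\,)'$ cooperate so that the ascending union $\langle S\rangle$ simultaneously inherits character-set closure and property (b); once those stability checks are in place, the two arguments dovetail and the corollary follows.
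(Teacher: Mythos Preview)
Your proof is correct and follows essentially the same approach as the paper: both establish $\langle I\rangle_r\subseteq k[\sqrt{\langle S\rangle}]$ by verifying via Corollary~\ref{drmi-cor2} that $\sqrt{\langle S\rangle}$ satisfies conditions (a) and (b), and both use the containment $k[\sqrt{\langle S\rangle}]\subseteq\langle I\rangle_r$ for the other direction. The only difference is that the paper dismisses this second inclusion as ``clear'' while you supply the explicit inductive argument tracking $S^{[k]}$ into $\langle I\rangle$ and then pass to radicals; your extra care in checking that $\langle S\rangle$ is a character set is likewise something the paper glosses over with ``it is easy to check''.
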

\begin{proof}
Clearly, $\langle I\rangle_r\supseteq k[\sqrt{\langle S\rangle}]$. We just need to show that $k[\sqrt{\langle S\rangle}]$ is already a radical well-mixed $\D$-ideal. It is easy to check that $k[\sqrt{\langle S\rangle}]$ is a $\D$-ideal. To show it is radical well-mixed, we need to check that $\sqrt{\langle S\rangle}$ satisfies the conditions (a) and (b) of Corollary \ref{drmi-cor2}. (a) is obvious. For (b), let $\bu,\bv\in\N[x]^n$ such that $\bu+\bv\in \sqrt{\langle S\rangle}$, then there exists $m\in\N^*$ such that $m(\bu+\bv)\in\langle S\rangle=\cup_{k=0}^{\infty}S^{[k]}$. So there exists $k\in\N$ such that $m(\bu+\bv)\in S^{[k]}$. Hence $m(\bu+x\bv)\in S^{[k+1]}\subseteq\langle S\rangle$. Therefore, $\bu+x\bv\in\sqrt{\langle S\rangle}$.
\end{proof}

\begin{corollary}\label{drmi-cor1}
Suppose $S_1,S_2\subseteq\N[x]^n$. Then $k[\sqrt{\langle S_1\cup S_2\rangle}]=k[\sqrt{\langle S_1\rangle}]\cup k[\sqrt{\langle S_2\rangle}]$.
\end{corollary}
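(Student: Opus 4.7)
The plan is to reduce the identity to a purely set-theoretic statement about support sets in $\N[x]^n$. Concretely, I will prove
\[
\sqrt{\langle S_1\cup S_2\rangle} = \sqrt{\langle S_1\rangle} \cup \sqrt{\langle S_2\rangle}
\]
in $\N[x]^n$; the corollary then follows by applying $k[\cdot]$ to both sides (since $k[A\cup B] = k[A] + k[B]$ for monomial supports, which is how $\cup$ is being used on the ideal side, as already in $I_1 + I_2 = k[S_1\cup S_2]$). So the real content is to show that every set operation used to build $\sqrt{\langle\cdot\rangle}$ distributes over finite unions of subsets of $\N[x]^n$.

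First, directly from the definitions
\[
[T] = \{g\mathbf{u}+t : \mathbf{u}\in T,\, g\in\N[x]^*,\, t\in\N[x]^n\}, \qquad T' = \{\mathbf{u}+x\mathbf{v} : \mathbf{u}+\mathbf{v}\in T\},
\]
I would check the two commutation identities $[A\cup B] = [A]\cup[B]$ and $(A\cup B)' = A'\cup B'$; each is a one-line chase through the defining quantifiers, since the element-level conditions ``$\mathbf{u}\in A\cup B$'' and ``$\mathbf{u}+\mathbf{v}\in A\cup B$'' split over the disjunction.

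Next, a routine induction on $k$ gives $(S_1\cup S_2)^{[k]} = S_1^{[k]} \cup S_2^{[k]}$. The base case $k=0$ is just the definition $S^{[0]}=S$, and the inductive step is
\[
(S_1\cup S_2)^{[k]} = \bigl[(S_1\cup S_2)^{[k-1]}\bigr]' = \bigl[S_1^{[k-1]}\cup S_2^{[k-1]}\bigr]' = [S_1^{[k-1]}]' \cup [S_2^{[k-1]}]' = S_1^{[k]}\cup S_2^{[k]},
\]
using both commutation identities in turn. Taking the union over $k$ yields $\langle S_1\cup S_2\rangle = \langle S_1\rangle\cup\langle S_2\rangle$.

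Finally, $\sqrt{\cdot}$ also distributes: $\mathbf{u}\in\sqrt{A\cup B}$ iff there exists $m\in\N^*$ with $m\mathbf{u}\in[A\cup B]=[A]\cup[B]$, which happens iff $\mathbf{u}\in\sqrt{A}$ or $\mathbf{u}\in\sqrt{B}$ (the existential quantifier in $m$ commutes with the disjunction). Combining this with the equality $\langle S_1\cup S_2\rangle=\langle S_1\rangle\cup\langle S_2\rangle$ just established delivers the desired set identity, and the corollary is immediate. I do not anticipate any genuine obstacle; the whole argument is a tidy compatibility check of $[\cdot]$, $(\cdot)'$, $\sqrt{\cdot}$ and the iterated union $\langle\cdot\rangle$ with finite unions, so once the two commutation lemmas are in place the proof is essentially bookkeeping.
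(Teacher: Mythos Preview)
Your argument is correct: each of the operators $[\cdot]$, $(\cdot)'$, the iterated $(\cdot)^{[k]}$, the union $\langle\cdot\rangle$, and $\sqrt{\cdot}$ is defined elementwise by an existential condition, so each commutes with finite unions exactly as you say. The resulting set identity $\sqrt{\langle S_1\cup S_2\rangle}=\sqrt{\langle S_1\rangle}\cup\sqrt{\langle S_2\rangle}$ immediately yields the corollary once one reads the $\cup$ on the ideal side as $+$ (i.e.\ $k[A]+k[B]=k[A\cup B]$).

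The paper takes a different route. Rather than computing the two support sets directly, it argues by minimality: the inclusion $\supseteq$ is clear, and for the reverse inclusion it checks that $k[\sqrt{\langle S_1\rangle}\cup\sqrt{\langle S_2\rangle}]$ is already a radical well-mixed $\sigma$-ideal, by verifying conditions (a) and (b) of Corollary~\ref{drmi-cor2} for the union of the two support sets (each $\sqrt{\langle S_i\rangle}$ individually satisfies them, and the conditions are elementwise, so the union does too). Since this ideal contains $S_1\cup S_2$, it must contain the radical well-mixed closure $k[\sqrt{\langle S_1\cup S_2\rangle}]$. That approach is shorter on the page but leans on Corollary~\ref{drmi-cor2}, whose ``$\Leftarrow$'' direction in turn rests on the prime decomposition theorem. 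Your proof is more self-contained and elementary---it never invokes the structural criterion or the decomposition---at the cost of spelling out the distributivity lemmas explicitly.
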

\begin{proof}
Clearly $k[\sqrt{\langle S_1\cup S_2\rangle}]\supseteq k[\sqrt{\langle S_1\rangle}]\cup k[\sqrt{\langle S_2\rangle}]$. We only need to show that $k[\sqrt{\langle S_1\rangle}\cup\sqrt{\langle S_2\rangle}]$ is already a radical well-mixed $\D$-ideal.

Obviously, $\sqrt{\langle S_1\rangle}\cup\sqrt{\langle S_2\rangle}$ is a character set. Let $\bu,\bv\in\N[x]^n$ such that $\mathbf{u}+\mathbf{v}\in \sqrt{\langle S_1\rangle}\cup\sqrt{\langle S_2\rangle}$, then $\mathbf{u}+\mathbf{v}\in \sqrt{\langle S_1\rangle}$ or $\sqrt{\langle S_2\rangle}$ and hence $\mathbf{u}+x\mathbf{v}\in \sqrt{\langle S_1\rangle}$ or $\sqrt{\langle S_2\rangle}$. So $\mathbf{u}+x\mathbf{v}\in \sqrt{\langle S_1\rangle}\cup\sqrt{\langle S_2\rangle}$ which proves the condition (b) of Corollary \ref{drmi-cor2}. Similarly for the condition (a) of Corollary \ref{drmi-cor2}. Therefore, the corollary follows from Corollary \ref{drmi-cor2}.
\end{proof}

\begin{corollary}\label{drmi-cor3}
Let $I,J$ be two monomial $\D$-ideals. Then $\langle I+J\rangle_r=\langle I\rangle_r+\langle J\rangle_r$.
\end{corollary}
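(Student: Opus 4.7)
The plan is to reduce everything to the character-set computation already handled in Corollary \ref{drmi-cor1}. Since $I$ and $J$ are monomial $\D$-ideals, I can write $I=k[S_1]$ and $J=k[S_2]$ where $S_1,S_2\subseteq\N[x]^n$ are character sets (Lemma in Section 3). Then by the additivity of the construction $k[\,\cdot\,]$ on character sets (sum of ideals = ideal of the union of supports), $I+J=k[S_1\cup S_2]$.

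Next, I invoke the preceding corollary, which identifies the radical well-mixed closure of a monomial $\D$-ideal $k[S]$ with $k[\sqrt{\langle S\rangle}]$. Applying this three times gives
\[
\langle I+J\rangle_r=k\bigl[\sqrt{\langle S_1\cup S_2\rangle}\bigr],\qquad \langle I\rangle_r=k\bigl[\sqrt{\langle S_1\rangle}\bigr],\qquad \langle J\rangle_r=k\bigl[\sqrt{\langle S_2\rangle}\bigr].
\]
Now I apply Corollary \ref{drmi-cor1}, which says $k[\sqrt{\langle S_1\cup S_2\rangle}]=k[\sqrt{\langle S_1\rangle}]\cup k[\sqrt{\langle S_2\rangle}]$ (read as a sum of ideals, equivalently the graded piece $k[\sqrt{\langle S_1\rangle}\cup\sqrt{\langle S_2\rangle}]$). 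Hence
\[
\langle I+J\rangle_r=k\bigl[\sqrt{\langle S_1\rangle}\bigr]+k\bigl[\sqrt{\langle S_2\rangle}\bigr]=\langle I\rangle_r+\langle J\rangle_r,
\]
which is exactly the asserted equality.

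There is no real obstacle here; the statement is essentially a corollary of Corollary \ref{drmi-cor1} once one translates between ideals and their support sets via the preceding corollary. The only delicate point worth stating explicitly in the write-up is the identification $k[A]+k[B]=k[A\cup B]$ for subsets $A,B\subseteq\N[x]^n$, which is immediate from the $\N[x]^n$-graded decomposition $R=\oplus_{\bu}k\Y^{\bu}$ and ensures that the ``union'' appearing in Corollary \ref{drmi-cor1} really is the ideal-theoretic sum.
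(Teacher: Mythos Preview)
Your proof is correct and follows essentially the same route as the paper: write $I=k[S_1]$, $J=k[S_2]$, use the preceding corollary to identify each radical well-mixed closure with $k[\sqrt{\langle\,\cdot\,\rangle}]$, and then invoke Corollary~\ref{drmi-cor1}. The only difference is that you spell out the identifications $I+J=k[S_1\cup S_2]$ and $k[A]+k[B]=k[A\cup B]$ explicitly, which the paper leaves implicit.
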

\begin{proof}
Suppose $I=k[S_1],J=k[S_2]$. Then $\langle I+J\rangle_r=k[\sqrt{\langle S_1\cup S_2\rangle}]$ and $\langle I\rangle_r+\langle J\rangle_r=k[\sqrt{\langle S_1\rangle}\cup\sqrt{\langle S_2\rangle}]$. So the equality follows from Corollary \ref{drmi-cor1}.
\end{proof}

\section{$\D$-Prime Decomposition of Perfect Monomial $\D$-Ideals}
It is well-known that in a $\D$-polynomial ring, any perfect $\D$-ideal is a finite intersection of $\D$-prime $\D$-ideals. In this section, we will give a $\D$-prime decomposition of perfect monomial $\D$-ideals in a $\D$-polynomial ring. The following lemma is taken from \cite[Proposition 1.2.20]{wibmer}.
\begin{proposition}\label{dpmi-prop}
Let $F$ and $G$ be subsets of any $\D$-ring $R$. Then
\[\{F\}\cap\{G\}=\{FG\}.\]
\end{proposition}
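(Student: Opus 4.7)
The inclusion $\{FG\}\subseteq\{F\}\cap\{G\}$ is immediate, since $FG$ lies in both $\{F\}$ and $\{G\}$ and the intersection of two perfect $\D$-ideals is again perfect. For the reverse inclusion, my plan is first to establish the multiplicative containment $\{F\}\cdot\{G\}\subseteq\{FG\}$; granted this, any $a\in\{F\}\cap\{G\}$ satisfies $a^2=a\cdot a\in\{F\}\cdot\{G\}\subseteq\{FG\}$, and since $\{FG\}$ is perfect and hence radical, $a\in\{FG\}$ as desired.

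The main technical tool will be the ideal quotient. For a fixed element $a\in R$, set $(\{FG\}:a)=\{b\in R:ab\in\{FG\}\}$. The key claim is that $(\{FG\}:a)$ is a perfect $\D$-ideal of $R$. That it is an ideal is routine. Closure under $\sigma$ follows directly from the well-mixed property of $\{FG\}$: if $ab\in\{FG\}$, then $ab^x\in\{FG\}$, so $b^x\in(\{FG\}:a)$. For perfectness I would appeal to the Section~2 Lemma which reduces the perfect condition for a $\D$-ideal to the criterion $b^{x+1}\in I\Rightarrow b\in I$. So suppose $b^{x+1}\in(\{FG\}:a)$, i.e., $ab^{x+1}\in\{FG\}$; multiplying inside $\{FG\}$ by $a^x$ yields $(ab)^{x+1}=a^{x+1}b^{x+1}=a^x\cdot ab^{x+1}\in\{FG\}$, and perfectness of $\{FG\}$ applied to the element $ab$ then gives $ab\in\{FG\}$, i.e., $b\in(\{FG\}:a)$.

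With the claim in place, $\{F\}\cdot\{G\}\subseteq\{FG\}$ will follow from a two-step argument. First I fix $a\in F$: since each $b\in G$ plainly lies in $(\{FG\}:a)$ and that set is a perfect $\D$-ideal, we get $\{G\}\subseteq(\{FG\}:a)$, which unpacks to $ab\in\{FG\}$ for every $a\in F$ and every $b\in\{G\}$. Next I fix $b\in\{G\}$ and apply the symmetric form of the claim to $(\{FG\}:b)$, a perfect $\D$-ideal by the same verification with the roles of the two factors swapped; the previous step shows $F\subseteq(\{FG\}:b)$, hence $\{F\}\subseteq(\{FG\}:b)$, which is exactly $ab\in\{FG\}$ for every $a\in\{F\}$ and every $b\in\{G\}$, completing the proof modulo the claim.

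The main obstacle is the perfectness step in the claim. The $\D$-ideal part uses only the well-mixed property of $\{FG\}$, but verifying the perfectness test depends crucially on the identity $(ab)^{x+1}=a^x\cdot ab^{x+1}$, which converts the hypothesis on $ab^{x+1}$ into a hypothesis on a genuine $(x+1)$-th power of $ab$ so that the $b^{x+1}$-criterion of the preliminary Lemma can be invoked; without passing through this identity one cannot bridge from the condition ``$ab^{x+1}$ is in the ideal'' to ``$ab$ is in the ideal.''
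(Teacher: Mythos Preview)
Your argument is correct. The paper does not actually prove this proposition; it simply cites \cite[Proposition~1.2.20]{wibmer}. Your colon-ideal approach---showing that $(\{FG\}:a)$ is a perfect $\D$-ideal via the well-mixed property and the $b^{x+1}$-criterion, then bootstrapping in two steps to obtain $\{F\}\cdot\{G\}\subseteq\{FG\}$---is precisely the standard argument one finds in the literature for this identity, so there is no meaningful divergence to report.
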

\begin{lemma}\label{dpmi-lemma}
Let $I$ be a perfect $\D$-ideal of $R$. Suppose that $\Y^{\mathbf{u}_1},\Y^{\mathbf{u}_2}$ are two monomials in $R$ such that $\Y^{\mathbf{u}_1+\mathbf{u}_2}\in I$. Then
\[I=\{I, \Y^{\mathbf{u}_1}\}\cap\{I, \Y^{\mathbf{u}_2}\}.\]
\end{lemma}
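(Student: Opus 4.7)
The plan is to mirror the proof of Lemma \ref{ddi-lemma} exactly, replacing the radical well-mixed closure operator $\langle\cdot\rangle_r$ with the perfect closure operator $\{\cdot\}$, and replacing Proposition \ref{drmi-prop} with its perfect-ideal analogue, Proposition \ref{dpmi-prop}.

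First I would apply Proposition \ref{dpmi-prop} to the right-hand side to get
\[\{I,\Y^{\mathbf{u}_1}\}\cap\{I,\Y^{\mathbf{u}_2}\}=\bigl\{(I,\Y^{\mathbf{u}_1})(I,\Y^{\mathbf{u}_2})\bigr\}.\]
Then I would observe that every generator of the product ideal $(I,\Y^{\mathbf{u}_1})(I,\Y^{\mathbf{u}_2})$ either lies in $I$ (whenever at least one factor comes from $I$) or is a constant multiple of $\Y^{\mathbf{u}_1+\mathbf{u}_2}$, which lies in $I$ by hypothesis. Thus the product ideal is contained in $I$, so
\[\bigl\{(I,\Y^{\mathbf{u}_1})(I,\Y^{\mathbf{u}_2})\bigr\}\subseteq\{I\}=I,\]
where the last equality holds because $I$ is already perfect.

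For the reverse inclusion, $I\subseteq\{I,\Y^{\mathbf{u}_1}\}$ and $I\subseteq\{I,\Y^{\mathbf{u}_2}\}$ are immediate from the definition of perfect closure as the smallest perfect $\D$-ideal containing the given set, so $I\subseteq\{I,\Y^{\mathbf{u}_1}\}\cap\{I,\Y^{\mathbf{u}_2}\}$. Combining the two inclusions gives the desired equality.

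There is no real obstacle here; the whole argument is a two-line consequence of Proposition \ref{dpmi-prop} together with the fact that $I$ is perfect. The only thing to be careful about is making sure the containment $(I,\Y^{\mathbf{u}_1})(I,\Y^{\mathbf{u}_2})\subseteq I$ is stated cleanly, using $\Y^{\mathbf{u}_1+\mathbf{u}_2}=\Y^{\mathbf{u}_1}\Y^{\mathbf{u}_2}\in I$ for the one genuinely new generator produced by the product.
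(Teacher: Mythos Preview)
Your proof is correct and follows exactly the same approach as the paper: apply Proposition \ref{dpmi-prop} to rewrite the intersection as a perfect closure of a product, observe that this product is contained in $I$ because $\Y^{\mathbf{u}_1}\Y^{\mathbf{u}_2}=\Y^{\mathbf{u}_1+\mathbf{u}_2}\in I$, and use that $I$ is perfect. The paper compresses this to the single line $\{I,\Y^{\mathbf{u}_1}\}\cap\{I,\Y^{\mathbf{u}_2}\}=\{I,\Y^{\mathbf{u}_1+\mathbf{u}_2}\}=I$, but your expanded version with the explicit two inclusions is the same argument.
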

\begin{proof}
By Proposition \ref{dpmi-prop},
\[\{I, \Y^{\mathbf{u}_1}\}\cap\{ I, \Y^{\mathbf{u}_2}\}=\{ I, \Y^{\mathbf{u}_1+\mathbf{u}_2}\}=I.\]
\end{proof}

For $\mathbf{b}=(b_1,\ldots,b_n)\in \{0,1\}^n$, we define
\[\mathfrak{p}^{\mathbf{b}}:=[y_i\mid b_i\neq 0],\]
which is a $\D$-prime $\D$-ideal.
\begin{theorem}\label{dpmi-thm}
Let $I=\{\Y^{\bu}:\bu\in S\}$ where $S\subseteq \N[x]^n$. Then $I$ can be written as a finite intersection of $\D$-prime $\D$-ideals of the forms $\mathfrak{p}^{\mathbf{b}}$. That is, there exist $\mathbf{b}_1,\ldots,\mathbf{b}_s\in \{0,1\}^n$ such that
\[I=\mathfrak{p}^{\mathbf{b}_1}\cap\ldots\cap\mathfrak{p}^{\mathbf{b}_s}.\]
Moreover, if the decomposition is irredundant, then it is unique.
\end{theorem}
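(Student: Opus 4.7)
The plan is to mirror closely the proof of Theorem \ref{drdi-thm}, replacing the radical well-mixed closure with the perfect closure, and the ideals $\mathfrak{m}^{\mathbf{b}}$ with $\mathfrak{p}^{\mathbf{b}}$. The three main tools I will use are: Lemma \ref{dpmi-lemma} as the splitting device, Proposition \ref{dpmi-prop1}(3) to collapse symbolic exponents, and the observation that $\mathfrak{p}^{\mathbf{b}}$ is $\D$-prime (hence perfect).

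First I would decompose each generator. Given $\mathbf{u}\in S$, write $\mathbf{u}=\sum_{i=1}^n\sum_{j=0}^{d_i}c_{ij}\,x^j\mathbf{e}_i$ as a sum of atomic pieces of the form $x^j\mathbf{e}_i$. Since $\Y^{\mathbf{u}}\in I$, iteratively applying Lemma \ref{dpmi-lemma} to the identity $\Y^{\mathbf{v}+\mathbf{w}}=\Y^{\mathbf{v}}\Y^{\mathbf{w}}$ expresses $I$ as a finite intersection of perfect $\D$-ideals, each obtained by replacing $\Y^{\mathbf{u}}$ by a single atom $y_{i}^{x^{j}}$. Doing this simultaneously for every generator of $I$ yields
\[I=\bigcap \{\,y_{i_1}^{x^{a_1}},\ldots,y_{i_k}^{x^{a_k}}\,\},\]
where the intersection runs over all choices of one atomic piece per original generator. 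Since $I$ is perfect, Proposition \ref{dpmi-prop1}(3) (or the definition of perfectness applied to $a=y_{i}$ and $g=x^{a}$) gives $\{y_i^{x^a}\}=\{y_i\}$, so each intersectand equals $\{y_{i_1},\ldots,y_{i_k}\}$. The $\D$-ideal $[y_{i_1},\ldots,y_{i_k}]$ is $\D$-prime, hence perfect, so $\{y_{i_1},\ldots,y_{i_k}\}=[y_{i_1},\ldots,y_{i_k}]=\mathfrak{p}^{\mathbf{b}}$ for the unique $\mathbf{b}\in\{0,1\}^n$ with $b_\ell=1$ exactly when $\ell\in\{i_1,\ldots,i_k\}$.

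The intersection thus lies inside the family $\{\mathfrak{p}^{\mathbf{b}}:\mathbf{b}\in\{0,1\}^n\}$, which has only $2^n$ elements, so after removing duplicates we obtain a finite intersection, and after further discarding intersectands containing others, an irredundant finite decomposition $I=\mathfrak{p}^{\mathbf{b}_1}\cap\cdots\cap\mathfrak{p}^{\mathbf{b}_s}$ as required. For uniqueness, suppose $\bigcap_{i=1}^s \mathfrak{p}^{\mathbf{b}_i}=\bigcap_{j=1}^t \mathfrak{p}^{\mathbf{a}_j}$ are two irredundant decompositions. Fix $i$ and assume for contradiction that $\mathfrak{p}^{\mathbf{a}_j}\not\subseteq\mathfrak{p}^{\mathbf{b}_i}$ for every $j$. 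Pick $y_{l_j}\in\mathfrak{p}^{\mathbf{a}_j}\setminus\mathfrak{p}^{\mathbf{b}_i}$ for each $j$ and form $a=\prod_{j=1}^t y_{l_j}$; then $a\in\bigcap_j\mathfrak{p}^{\mathbf{a}_j}=I\subseteq\mathfrak{p}^{\mathbf{b}_i}$. Since $\mathfrak{p}^{\mathbf{b}_i}$ is a prime algebraic ideal, some $y_{l_j}$ lies in it, contradicting our choice. Thus for each $i$ there exists $j$ with $\mathfrak{p}^{\mathbf{a}_j}\subseteq\mathfrak{p}^{\mathbf{b}_i}$; by symmetry and irredundancy, $s=t$ and the two families coincide.

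The one place where care is needed is the iterated splitting step when $S$ is infinite: a priori the resulting intersection has infinitely many terms, one per combinatorial choice of atoms across generators. The saving observation is that each term is of the form $\mathfrak{p}^{\mathbf{b}}$ with $\mathbf{b}\in\{0,1\}^n$, a set of size $2^n$, so only finitely many distinct intersectands actually occur and finiteness is automatic without any Dickson-type lemma.
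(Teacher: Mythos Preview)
Your argument is essentially the paper's, with the order of two steps reversed: the paper first collapses the symbolic exponents (observing that $\Y^{\mathbf{u}}\in I$ forces $\Y^{\mathbf{a}}\in I$ for $\mathbf{a}\in\{0,1\}^n$ the support vector of $\mathbf{u}$, so one may take $S\subseteq\{0,1\}^n$ from the outset) and only then iterates Lemma~\ref{dpmi-lemma}; you split first into atoms $y_i^{x^j}$ and collapse afterward via $\{y_i^{x^j}\}=\{y_i\}$. Both routes land on the same intersectands $[y_{i_1},\ldots,y_{i_k}]=\mathfrak{p}^{\mathbf{b}}$, your $2^n$ counting is a clean replacement for the paper's ``easy to see it is finite'', and your uniqueness argument is exactly the one the paper refers back to.

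There is one point to tighten in your last paragraph. For infinite $S$, noting that at most $2^n$ distinct intersectands can occur shows the intersection is a finite one, but it does not by itself establish the equality $I=\bigcap\{y_{i_1}^{x^{a_1}},\ldots\}$: Lemma~\ref{dpmi-lemma} only licenses finitely many splits, so the displayed intersection ``over all choices of one atomic piece per original generator'' is not a priori equal to $I$ when there are infinitely many generators. The paper's reduction to $S\subseteq\{0,1\}^n$ sidesteps this entirely, since then $S$ is finite and your iteration terminates. Alternatively, staying within your framework, you can argue that the sets $C_T=\{\mathbf{b}\in\{0,1\}^n:\mathfrak{p}^{\mathbf{b}}\supseteq\{\Y^{\mathbf{u}}:\mathbf{u}\in T\}\}$, for finite $T\subseteq S$, form a decreasing family inside the finite set $\{0,1\}^n$ and hence stabilize at some $T_0$; applying your finite-$S$ argument to $T_0$ then gives $\{\Y^{\mathbf{u}}:\mathbf{u}\in T_0\}=\bigcap_{\mathbf{b}\in C_{T_0}}\mathfrak{p}^{\mathbf{b}}\supseteq I\supseteq\{\Y^{\mathbf{u}}:\mathbf{u}\in T_0\}$, closing the gap.
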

\begin{proof}
For $\bu=(u_1,\ldots,u_n)\in\N[x]^n$, define a vector $\mathbf{a}=(a_1,\ldots,a_n)\in \{0,1\}^n$ such that $a_i=1$ if $u_i\ne0$, or $a_i=0$, for $i=1,\ldots,n$. It is easy to see that if $\Y^{\bu}\in I$, then $\Y^{\mathbf{a}}\in I$. So without loss of generality, we can assume that $S\subseteq\{0,1\}^n$. By Lemma \ref{dpmi-lemma}, if a monomial $\Y^{\mathbf{u}}\in I$ and $\mathbf{u}=\mathbf{u}_1+\mathbf{u}_2$, then $I=\{I, \Y^{\mathbf{u}_1}\}\cap\{I, \Y^{\mathbf{u}_2}\}$. Iterating this process eventually write $I$ as follows:
\[I=\cap\{y_{i_1},\ldots, y_{i_k}\}=\cap[y_{i_1},\ldots, y_{i_k}].\]

After deleting unnecessary intersectands, we can assume that the intersection is irredundant. It is easy to see that this irredundant intersection is finite. Thus there exist $\mathbf{b}_1,\ldots,\mathbf{b}_s\in \{0,1\}^n$ such that
\[I=\mathfrak{p}^{\mathbf{b}_1}\cap\ldots\cap\mathfrak{p}^{\mathbf{b}_s}.\]

The uniqueness is similar to Theorem \ref{drdi-thm}.
\end{proof}

\begin{remark}
In fact, it is more straightforward to get the $\D$-prime decomposition of perfect monomial $\D$-ideals by using Theorem \ref{drdi-thm}. Assume that $S\subseteq\{0,1\}^n$. Then by Theorem \ref{drdi-thm}, $\langle \Y^{\bu}:\bu\in S\rangle_r=\cap\langle y_{i_1},\ldots, y_{i_k}\rangle_r=\cap[y_{i_1},\ldots, y_{i_k}]$. Since $[y_{i_1},\ldots, y_{i_k}]$ are $\D$-prime $\D$-ideals, it follows $\langle \Y^{\bu}:\bu\in S\rangle_r$ is a perfect $\D$-ideal. Thus
\[I=\{\Y^{\bu}:\bu\in S\}=\langle \Y^{\bu}:\bu\in S\rangle_r=\cap[y_{i_1},\ldots, y_{i_k}].\]
\end{remark}

\section{Alexander Duality of Monomial $\D$-Ideals}
\begin{definition}
Given two vectors $\mathbf{a},\mathbf{b}\in (\N\cup\{-1\})^n$ with $\mathbf{b}\leqslant\mathbf{a}$(that is, $b_i\leqslant a_i$ for $i=1,\ldots,n$), let $\mathbf{a}\backslash\mathbf{b}$ denote the vector whose $i^{th}$ coordinate is
\begin{equation*}
a_i\backslash b_i=\begin{cases}
a_i+1-b_i, & \textrm{if }b_i\geqslant 0;\\
-1, & \textrm{if }b_i=-1.
\end{cases}
\end{equation*}
Assume $I$ is a radical well-mixed monomial $\D$-ideal. If $\mathbf{a}$ is a vector in $(\N\cup\{-1\})^n$ satisfying $\mathbf{a}\geqslant\mathbf{b}$ for any character vector $\mathbf{b}$ of $I$, then the {\bf Alexander dual} of $I$ with respect to $\mathbf{a}$ is defined as
\[I^{[\mathbf{a}]}=\cap\{\mathfrak{m}^{\mathbf{a}\backslash\mathbf{b}}\mid \mathbf{b} \textrm{ is a character vector of } I\}.\]
\end{definition}

Note that for vectors $\mathbf{b}\leqslant\mathbf{a}$ in $(\N\cup\{-1\})^n$, $\mathbf{a}\backslash(\mathbf{a}\backslash\mathbf{b})=\mathbf{b}$.

As in Corollary \ref{drmi-cor2}, for $\mathbf{b}=(b_1,\ldots,b_n)\in (\N\cup\{-1\})^n$, let $x^{\mathbf{b}}=(x^{b_1},\ldots,x^{b_n})$ and set $x^{-1}=0$.
\begin{proposition}
Suppose that $I$ is a radical well-mixed monomial $\D$-ideal and $\mathbf{a}$ is a vector in $(\N\cup\{-1\})^n$ satisfying $\mathbf{a}\geqslant\mathbf{c}$ for any character vector $\mathbf{c}$ of $I$. If $\mathbf{b}\leqslant\mathbf{a}$, then $\Y^{x^{\mathbf{b}}}$ lies outside $I$ if and only if $\Y^{x^{\mathbf{a}-\mathbf{b}}}$ lies inside $I^{[\mathbf{a}]}$.
\end{proposition}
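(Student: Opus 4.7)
The plan is to reduce both sides of the biconditional to one and the same combinatorial condition on the character vectors of $I$. Writing them as $\mathbf{c}_1,\ldots,\mathbf{c}_m\in(\N\cup\{-1\})^n$, we have $I=\langle\Y^{x^{\mathbf{c}_1}},\ldots,\Y^{x^{\mathbf{c}_m}}\rangle_r$ and $I^{[\mathbf{a}]}=\bigcap_{j=1}^m\mathfrak{m}^{\mathbf{a}\backslash\mathbf{c}_j}$. The plan is to show that both $\Y^{x^{\mathbf{b}}}\notin I$ and $\Y^{x^{\mathbf{a}-\mathbf{b}}}\in I^{[\mathbf{a}]}$ are equivalent to the single condition
\[
(\star)\quad x^{\mathbf{c}_j}\not\preceq x^{\mathbf{b}}\quad\text{for every }j\in\{1,\ldots,m\}.
\]

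For the ``$I$'' side, the direction $x^{\mathbf{c}_j}\preceq x^{\mathbf{b}}\Rightarrow\Y^{x^{\mathbf{b}}}\in I$ is immediate from Lemma~\ref{drmi-lemma1}. For the converse, first apply Corollary~\ref{drmi-cor3} to write $I=\sum_{j=1}^m\langle\Y^{x^{\mathbf{c}_j}}\rangle_r$, then apply Proposition~\ref{drmi-prop} iteratively to each summand to get $\langle\Y^{x^{\mathbf{c}_j}}\rangle_r=\bigcap_{i:(\mathbf{c}_j)_i\neq-1}[y_i^{x^{(\mathbf{c}_j)_i}}]$ (each $[y_i^{x^{(\mathbf{c}_j)_i}}]$ is a prime $\sigma$-ideal, hence already radical well-mixed, so equals its own radical well-mixed closure). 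Since $\Y^{x^{\mathbf{b}}}$ is a squarefree product of distinct variables of the underlying polynomial ring $k[y_i^{x^l}]$, monomial divisibility gives $\Y^{x^{\mathbf{b}}}\in[y_i^{x^{(\mathbf{c}_j)_i}}]$ iff $b_i\geq(\mathbf{c}_j)_i$, so $\Y^{x^{\mathbf{b}}}\in\langle\Y^{x^{\mathbf{c}_j}}\rangle_r$ iff $x^{\mathbf{c}_j}\preceq x^{\mathbf{b}}$. A monomial lies in a sum of monomial $\sigma$-ideals iff it lies in one of them, so $\Y^{x^{\mathbf{b}}}\in I$ iff some $x^{\mathbf{c}_j}\preceq x^{\mathbf{b}}$, which is precisely the negation of $(\star)$.

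For the ``$I^{[\mathbf{a}]}$'' side, I would directly compute when $\Y^{x^{\mathbf{a}-\mathbf{b}}}\in\mathfrak{m}^{\mathbf{a}\backslash\mathbf{c}_j}=[y_i^{x^{a_i+1-(\mathbf{c}_j)_i}}:(\mathbf{c}_j)_i\neq-1]$. The same squarefree divisibility argument shows membership iff there exists $i$ with $(\mathbf{c}_j)_i\neq-1$ and $a_i-b_i\geq a_i+1-(\mathbf{c}_j)_i$, which rearranges to $(\mathbf{c}_j)_i>b_i$. Unwinding the definition of $\preceq$ on the single-term polynomials $x^{(\mathbf{c}_j)_i}$ and $x^{b_i}$ shows this is exactly the negation of $x^{\mathbf{c}_j}\preceq x^{\mathbf{b}}$. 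Intersecting over $j$ yields $\Y^{x^{\mathbf{a}-\mathbf{b}}}\in I^{[\mathbf{a}]}$ iff $(\star)$ holds, which matches the previous characterization and completes the proof.

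The main obstacle is the converse direction in the first step: ruling out that combinations of the generators under the radical well-mixed closure could produce $\Y^{x^{\mathbf{b}}}$ without some single $x^{\mathbf{c}_j}$ being $\preceq$-below $x^{\mathbf{b}}$. The decomposition $\langle\Y^{x^{\mathbf{c}_j}}\rangle_r=\bigcap_i[y_i^{x^{(\mathbf{c}_j)_i}}]$ furnished by Proposition~\ref{drmi-prop}, combined with Corollary~\ref{drmi-cor3}, is what makes this step tractable. The rest is routine arithmetic, though some care is needed for the edge cases $b_i=-1$ (so that $a_i-b_i=a_i+1$) and $a_i=-1$ (which forces $(\mathbf{c}_j)_i=b_i=-1$, so that coordinate contributes nothing on either side).
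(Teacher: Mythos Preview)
Your argument is correct and reaches the same combinatorial pivot as the paper---both sides of the biconditional are shown equivalent to ``$\mathbf{c}_j\not\leq\mathbf{b}$ for every character vector $\mathbf{c}_j$'' (your condition $(\star)$, once one unpacks $x^{\mathbf{c}_j}\preceq x^{\mathbf{b}}$ for these single-term vectors). The execution, however, differs on the $I^{[\mathbf{a}]}$ side. You check membership of $\Y^{x^{\mathbf{a}-\mathbf{b}}}$ in each $\mathfrak{m}^{\mathbf{a}\backslash\mathbf{c}_j}$ directly and handle the $-1$ coordinates by an ad hoc case analysis. The paper instead first lands in $\bigcap_j\mathfrak{m}^{\mathbf{a}+1-\mathbf{c}_j}$ (ordinary subtraction, no $-1$'s) and then proves the structural identity
\[
\bigcap_j\mathfrak{m}^{\mathbf{a}+1-\mathbf{c}_j}=I^{[\mathbf{a}]}+\mathfrak{m}^{\mathbf{a}+2},
\]
using Corollary~\ref{drmi-cor3} to see the right side is radical, and then observes that $\mathbf{a}-\mathbf{b}\leq\mathbf{a}+1$ forces $\Y^{x^{\mathbf{a}-\mathbf{b}}}\notin\mathfrak{m}^{\mathbf{a}+2}$, so the $\mathfrak{m}^{\mathbf{a}+2}$ summand can be dropped. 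Your route is shorter and more elementary; the paper's route isolates an identity that absorbs the $-1$ bookkeeping once and for all. On the $I$ side the paper simply asserts $\Y^{x^{\mathbf{b}}}\notin I\Leftrightarrow\mathbf{b}\not\geq\mathbf{c}_i$ for all $i$, whereas you supply a full justification via Proposition~\ref{drmi-prop} and Corollary~\ref{drmi-cor3}; this is a welcome addition, and your decomposition $\langle\Y^{x^{\mathbf{c}_j}}\rangle_r=\bigcap_{i:(\mathbf{c}_j)_i\neq-1}[y_i^{x^{(\mathbf{c}_j)_i}}]$ is exactly right. One cosmetic remark: phrasing $(\star)$ via $\preceq$ is a detour, since for vectors of single powers of $x$ the relation $x^{\mathbf{c}_j}\preceq x^{\mathbf{b}}$ collapses to $\mathbf{c}_j\leq\mathbf{b}$ in $(\N\cup\{-1\})^n$; stating it that way would align your notation with the paper's.
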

\begin{proof}
Assume that $\{\mathbf{c}_1,\ldots,\mathbf{c}_m\}$is the set of character vectors of $I$. Then $\Y^{x^{\mathbf{b}}}\notin I$ if and only if $\mathbf{b}\ngeqslant\mathbf{c}_i$, or equivalently, $\mathbf{a}-\mathbf{b}\nleqslant\mathbf{a}-\mathbf{c}_i$ for all $i$. This means that for each $i$, some coordinate of $\mathbf{a}-\mathbf{b}$ equals at least the corresponding coordinate of $\mathbf{a}+1-\mathbf{c}_i$. That is $\Y^{x^{\mathbf{a}-\mathbf{b}}}\in \mathfrak{m}^{\mathbf{a}+1-\mathbf{c}_i}$ for all $i$, i.e.\ $\Y^{x^{\mathbf{a}-\mathbf{b}}}\in \cap_{i=1}^m\mathfrak{m}^{\mathbf{a}+1-\mathbf{c}_i}$. Next, let us show that
\begin{equation}\label{drmi-equ}
\cap_{i=1}^m\mathfrak{m}^{\mathbf{a}+1-\mathbf{c}_i}=\cap_{i=1}^m\mathfrak{m}^{\mathbf{a}\backslash\mathbf{c}_i}+
\mathfrak{m}^{\mathbf{a}+2}=I^{[\mathbf{a}]}+\mathfrak{m}^{\mathbf{a}+2}.
\end{equation}

It is obvious that $\cap_{i=1}^m(\mathfrak{m}^{\mathbf{a}\backslash\mathbf{c}_i}+\mathfrak{m}^{\mathbf{a}+2})
\supseteq\cap_{i=1}^m\mathfrak{m}^{\mathbf{a}\backslash\mathbf{c}_i}+\mathfrak{m}^{\mathbf{a}+2}$. For the converse, choose $f\in
\cap_{i=1}^m(\mathfrak{m}^{\mathbf{a}\backslash\mathbf{c}_i}+\mathfrak{m}^{\mathbf{a}+2})$, then for each $i$, we could write $f=f_i+g_i$, where $f_i\in \mathfrak{m}^{\mathbf{a}\backslash\mathbf{c}_i}, g_i\in \mathfrak{m}^{\mathbf{a}+2}$. Thus $f^m=(f_1+g_1)\ldots(f_m+g_m)\in \cap_{i=1}^m\mathfrak{m}^{\mathbf{a}\backslash\mathbf{c}_i}+\mathfrak{m}^{\mathbf{a}+2}$.
By Corollary \ref{drmi-cor3}, $\cap_{i=1}^m\mathfrak{m}^{\mathbf{a}\backslash\mathbf{c}_i}+\mathfrak{m}^{\mathbf{a}+2}$
is radical, so $f\in\cap_{i=1}^m\mathfrak{m}^{\mathbf{a}\backslash\mathbf{c}_i}+\mathfrak{m}^{\mathbf{a}+2}$. Hence we have
$$\cap_{i=1}^m(\mathfrak{m}^{\mathbf{a}\backslash\mathbf{c}_i}+\mathfrak{m}^{\mathbf{a}+2})
=\cap_{i=1}^m\mathfrak{m}^{\mathbf{a}\backslash\mathbf{c}_i}+\mathfrak{m}^{\mathbf{a}+2}.$$
By Corollary \ref{drmi-cor3} again,
\begin{align*}
\mathfrak{m}^{\mathbf{a}\backslash\mathbf{c}_i}+\mathfrak{m}^{\mathbf{a}+2}&=\langle y_j^{x^{a_j+1-c_{ij}}}:c_{ij}\neq -1,j=1,\ldots,n\rangle_r+\langle y_j^{x^{a_j+2}}:j=1,\ldots,n\rangle_r\\
&=\langle y_j^{x^{a_j+1-c_{ij}}},y_j^{x^{a_j+2}}:c_{ij}\neq -1,j=1,\ldots,n\rangle_r\\
&=\langle y_j^{x^{a_j+1-c_{ij}}},j=1,\ldots,n\rangle_r=\mathfrak{m}^{\mathbf{a}+1-\mathbf{c}_i}.
\end{align*}
Thus (\ref{drmi-equ}) holds. Since $\mathbf{a}-\mathbf{b}\leqslant\mathbf{a}+1$, $\Y^{x^{\mathbf{a}-\mathbf{b}}}\in \cap_{i=1}^m\mathfrak{m}^{\mathbf{a}+1-\mathbf{c}_i}=
\cap_{i=1}^m\mathfrak{m}^{\mathbf{a}\backslash\mathbf{c}_i}+
\mathfrak{m}^{\mathbf{a}+2}=I^{[\mathbf{a}]}+\mathfrak{m}^{\mathbf{a}+2}$ exactly when $\Y^{x^{\mathbf{a}-\mathbf{b}}}\in
I^{[\mathbf{a}]}$.
\end{proof}

\begin{theorem}
Suppose that $I$ is a radical well-mixed monomial $\D$-ideal and $\mathbf{a}$ is a vector in $(\N\cup\{-1\})^n$ satisfying $\mathbf{a}\geqslant\mathbf{b}$ for any character vector $\mathbf{b}$ of $I$. Then $\mathbf{a}\geqslant\mathbf{c}$ for any character vector $\mathbf{c}$ of $I^{[\mathbf{a}]}$, and $(I^{[\mathbf{a}]})^{[\mathbf{a}]}=I$.
\end{theorem}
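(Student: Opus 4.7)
The key tool will be the proposition just proved, which identifies, for $\mathbf{b}\leq\mathbf{a}$, the monomials $\Y^{x^{\mathbf{b}}}$ lying outside $I$ with the monomials $\Y^{x^{\mathbf{a}-\mathbf{b}}}$ lying inside $I^{[\mathbf{a}]}$. The plan has two stages: a structural step to bound the character vectors of $I^{[\mathbf{a}]}$, and then a formal involution calculation to derive the duality identity.

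For the first claim, I would start from the defining expression $I^{[\mathbf{a}]}=\cap\{\mathfrak{m}^{\mathbf{a}\backslash\mathbf{b}}:\mathbf{b}\text{ is a character vector of }I\}$, which is a (generally non-irredundant) prime decomposition of $I^{[\mathbf{a}]}$. By Theorem \ref{drdi-thm}, one extracts from it the unique irredundant standard prime decomposition, and the character vectors of $I^{[\mathbf{a}]}$ are then read off as the exponents of a minimal monomial generating set of $I^{[\mathbf{a}]}$ as a radical well-mixed $\D$-ideal (cf.\ Corollary \ref{drmi-cor}). The plan is to show that every minimal monomial generator $\Y^{x^{\mathbf{c}}}$ of $I^{[\mathbf{a}]}$ corresponds, via the preceding proposition together with the involutive identity $\mathbf{a}\backslash(\mathbf{a}\backslash\mathbf{b})=\mathbf{b}$, to a character vector of $I$; hence $\mathbf{c}$ lives in the $\mathbf{a}$-box because, by hypothesis, the character vectors of $I$ do.

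Given the first claim, the iterated dual $(I^{[\mathbf{a}]})^{[\mathbf{a}]}$ is well-defined, and I would apply the preceding proposition to both $I$ and $I^{[\mathbf{a}]}$ in succession: for every $\mathbf{b}\leq\mathbf{a}$,
\[\Y^{x^{\mathbf{b}}}\in I \iff \Y^{x^{\mathbf{a}-\mathbf{b}}}\notin I^{[\mathbf{a}]} \iff \Y^{x^{\mathbf{a}-(\mathbf{a}-\mathbf{b})}}\in (I^{[\mathbf{a}]})^{[\mathbf{a}]} \iff \Y^{x^{\mathbf{b}}}\in (I^{[\mathbf{a}]})^{[\mathbf{a}]}.\]
Thus $I$ and $(I^{[\mathbf{a}]})^{[\mathbf{a}]}$ contain exactly the same monomials of the form $\Y^{x^{\mathbf{b}}}$ with $\mathbf{b}\leq\mathbf{a}$. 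Because both ideals are radical well-mixed monomial $\D$-ideals whose character vectors lie in the $\mathbf{a}$-box (by the first claim applied to each), each is generated as a radical well-mixed $\D$-ideal by its collection of such small monomials, and equality of these collections forces $I=(I^{[\mathbf{a}]})^{[\mathbf{a}]}$.

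The main obstacle will be the first stage. The intersection $\cap\mathfrak{m}^{\mathbf{a}\backslash\mathbf{b}}$ is phrased in terms of prime components rather than minimal generators, so one must use the structural link provided by Theorem \ref{drdi-thm} and Corollary \ref{drmi-cor}, together with the proposition's bijection between monomials of $I$ and $I^{[\mathbf{a}]}$ inside the $\mathbf{a}$-box, to translate between the two viewpoints and verify that each resulting character vector is bounded by $\mathbf{a}$ coordinatewise. Once that translation is in hand, the second stage is a routine symbolic computation built on the involution $\mathbf{b}\mapsto\mathbf{a}\backslash\mathbf{b}$.
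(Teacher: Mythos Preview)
The paper gives no self-contained proof here, simply citing ``the proof of Theorem 5.24 in p.\,90 in \cite{mi}''; your sketch is exactly that standard Alexander-duality argument transported to the difference setting, using the preceding membership proposition as the key bijection and then iterating it to obtain $(I^{[\mathbf{a}]})^{[\mathbf{a}]}=I$. Your first stage is stated a bit loosely (the clean way is to show directly that if $c_j>a_j$ then lowering $c_j$ keeps $\Y^{x^{\mathbf{c}}}$ in every component $\mathfrak{m}^{\mathbf{a}\backslash\mathbf{b}}$, contradicting minimality), but this is precisely the content of the referenced classical proof, so the approaches coincide.
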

\begin{proof}
The proof is similar to the proof of Theorem 5.24 in p.90 in \cite{mi}.
\end{proof}

\begin{theorem}
Suppose that $I$ is a radical well-mixed monomial $\D$-ideal and $\mathbf{a}$ is a vector in $(\N\cup\{-1\})^n$ satisfying $\mathbf{a}\geqslant\mathbf{b}$ for any character vector $\mathbf{b}$ of $I$. Then
\[I=\cap\{\mathfrak{m}^{\mathbf{a}\backslash\mathbf{b}}\mid\mathbf{b} \textrm{ is a character vector of } I^{[\mathbf{a}]}\},\]
and
\[I^{[\mathbf{a}]}=\langle \Y^{x^{\mathbf{a}\backslash\mathbf{b}}}\mid \mathfrak{m}^{\mathbf{b}}\textrm{ is an irreducilbe component of } I\rangle_r.\]
\end{theorem}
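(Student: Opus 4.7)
The strategy for both assertions is to exploit the self-inverse identity $(I^{[\mathbf{a}]})^{[\mathbf{a}]}=I$ from the preceding theorem, together with the correspondence between the two minimal presentations of a radical well-mixed monomial $\D$-ideal: as an intersection of $\mathfrak{m}^{\mathbf{b}}$'s (its irreducible components), or as the radical well-mixed closure of certain monomials (its character vectors).

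For the first equation, the preceding theorem ensures that $\mathbf{a}$ dominates every character vector of $I^{[\mathbf{a}]}$, so the Alexander dual of $I^{[\mathbf{a}]}$ with respect to $\mathbf{a}$ is defined. Applying the definition of Alexander dual to $I^{[\mathbf{a}]}$ gives
\[(I^{[\mathbf{a}]})^{[\mathbf{a}]}=\cap\{\mathfrak{m}^{\mathbf{a}\backslash\mathbf{b}}\mid\mathbf{b}\text{ is a character vector of }I^{[\mathbf{a}]}\},\]
and combining with $(I^{[\mathbf{a}]})^{[\mathbf{a}]}=I$ yields the first equation at once.

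For the second equation, I would write the standard prime decomposition $I=\cap_{i=1}^{s}\mathfrak{m}^{\mathbf{b}_i}$ and then show that the character vectors of $I^{[\mathbf{a}]}$ are precisely $\{\mathbf{a}\backslash\mathbf{b}_i:i=1,\ldots,s\}$; granting this, the second equation follows because $I^{[\mathbf{a}]}$ equals the radical well-mixed closure of the monomials corresponding to its character vectors, by Corollary \ref{drmi-cor}. The first equation expresses $I$ as $\cap\{\mathfrak{m}^{\mathbf{a}\backslash\mathbf{c}}:\mathbf{c}\text{ char vec of }I^{[\mathbf{a}]}\}$; by the uniqueness part of Theorem \ref{drdi-thm}, once redundant terms are removed this intersection must coincide with the standard prime decomposition $\cap_{i=1}^{s}\mathfrak{m}^{\mathbf{b}_i}$, whence via the involution $\mathbf{a}\backslash(\mathbf{a}\backslash\mathbf{b})=\mathbf{b}$ the character vectors of $I^{[\mathbf{a}]}$ match up exactly with the $\mathbf{a}\backslash\mathbf{b}_i$.

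The main obstacle is showing that the matching above is a bijection and not merely a surjection: I need to verify that no two distinct character vectors $\mathbf{c}_1,\mathbf{c}_2$ of $I^{[\mathbf{a}]}$ can yield comparable prime ideals $\mathfrak{m}^{\mathbf{a}\backslash\mathbf{c}_1}\supseteq\mathfrak{m}^{\mathbf{a}\backslash\mathbf{c}_2}$, so that $\cap\mathfrak{m}^{\mathbf{a}\backslash\mathbf{c}}$ is already irredundant. This should reduce to a coordinate-wise calculation translating $\preceq$-incomparability of $\mathbf{c}_1$ and $\mathbf{c}_2$ (which holds by minimality of character vectors, via Lemma \ref{drmi-lemma1}) into $\supseteq$-incomparability of the corresponding prime ideals under the combinatorial involution $\mathbf{c}\mapsto\mathbf{a}\backslash\mathbf{c}$; concretely, if $\mathfrak{m}^{\mathbf{a}\backslash\mathbf{c}_1}\supseteq\mathfrak{m}^{\mathbf{a}\backslash\mathbf{c}_2}$ then the coordinate comparison forces $x^{\mathbf{c}_2}\preceq x^{\mathbf{c}_1}$, contradicting minimality.
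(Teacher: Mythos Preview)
Your proposal is correct and mirrors the standard argument the paper defers to in \cite{mi}: deduce both formulas from the double-dual identity $(I^{[\mathbf{a}]})^{[\mathbf{a}]}=I$ of the preceding theorem together with the uniqueness of the irredundant prime decomposition from Theorem~\ref{drdi-thm}. The irredundancy check you isolate is indeed the only point requiring care, and your coordinate computation (translating $\mathfrak{m}^{\mathbf{a}\backslash\mathbf{c}_1}\supseteq\mathfrak{m}^{\mathbf{a}\backslash\mathbf{c}_2}$ into $x^{\mathbf{c}_2}\preceq x^{\mathbf{c}_1}$, hence redundancy of $\mathbf{c}_1$ via Lemma~\ref{drmi-lemma1}) handles it correctly.
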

\begin{proof}
The proof is similar to the proof of Theorem 5.27 in p.90 in \cite{mi}.
\end{proof}

\bibliographystyle{amsplain}

\end{document}